\pgfplotsset{compat=1.15}
\newcommand{\J}{\mathbf{J}}
\newcommand{\rc}{{\textsc{rc}}}
\newcommand{\prc}{\pi^\rc}
\newcommand{\tv}{\textsc{tv}}
\newcommand{\TV}[2]{d_{\tv}\left({#1,#2}\right)} 
\newcommand{\sat}{\mathsf{Sat}}
\newcommand{\E}{\mathbb{E}}
\newcommand{\p}{\mathbb{P}}
\newcommand{\real}{\mathbb{R}}
\newcommand{\abs}[1]{|#1|}
\newcommand{\ol}[1]{\overline{#1}}
\newcommand{\tmix}{t_{\mathsf{mix}}}
\newcommand{\tin}{\tau^{\mathsf{in}}}
\newcommand{\sgn}[1]{\text{sign}\left(#1\right)}
\newcommand{\SK}{\mathsf{S}}
\newtheorem{theorem}{Theorem}[section]
  \newtheorem{proposition}[theorem]{Proposition}
    \newtheorem{definition}[theorem]{Definition}
  \newtheorem{lemma}[theorem]{Lemma}
  \newtheorem{remark}{Remark}[section]
\numberwithin{equation}{section}
\newcommand{\ind}{\text{ind}}
\newcommand{\gap}{\mathsf{gap}}
\title[Metastability for heavy-tailed spin glasses]{Metastability in Glauber dynamics for heavy-tailed \\ spin glasses}
\author{Reza Gheissari and Curtis Grant}
\address{R.\ Gheissari\hfill\break
Department of Mathematics \\ Northwestern University }
\email{gheissari@northwestern.edu}
\address{C.\ Grant\hfill\break
Department of Mathematics \\ Northwestern University }
\email{curtisgrant2026@u.northwestern.edu}
\begin{document}

\begin{abstract}
   We study the Glauber dynamics for heavy-tailed spin glasses, in which the couplings are in the domain of attraction of an $\alpha$-stable law for $\alpha\in (0,1)$. We show a sharp description of metastability on exponential timescales, in a form that is believed to hold for Glauber/Langevin dynamics for many mean-field spin glass models, but only known rigorously for the Random Energy Models. Namely, we establish a decomposition of the state space into sub-exponentially many wells, and show that the projection of the Glauber dynamics onto which well it resides in, asymptotically behaves like a Markov chain on wells with certain explicit transition rates. In particular, mixing inside wells occurs on much shorter timescales than transit times between wells, and the law of the next well the Glauber dynamics will fall into depends only on which well it currently resides in, not its full configuration. We can deduce consequences like an exact expression for the two-time autocorrelation functions that appear in the activated aging literature. 
\end{abstract}

\maketitle

\section{Introduction}

Equilibrium and out-of-equilibrium spin glasses are of significant interest in both the statistical physics and mathematics literature, and serve as prototypes for complexities of thermodynamic systems in the presence of quenched disorder.

The most famous spin glass models are the short-range Edwards--Anderson model~\cite{EA} and the mean-field models like the Sherrington--Kirkpatrick model~\cite{SK}. 
Towards making progress on short-range spin glasses, a variety of models of a sparser nature have been introduced to interpolate between mean-field models and short range ones. This includes dilute $p$-spin models, and the Viana--Bray model~\cite{KanterSompolinsky,Viana-Bray}. 
Another family of such interpolating models are heavy-tailed mean-field spin glasses, introduced by Cizeau and Bouchaud in~\cite{Cizeau}. Even though these models are fully connected, the subgraph generated by bonds on a given scale is sparse. In particular, the thermodynamics, at least at the level of free energy, are well-approximated by its sparsification---closely related to the Poissonian version of the Viana--Bray model~\cite{Janzen-et-al}. The heavy-tailed models are also of intrinsic interest in the context of literature on heavy-tailed random matrices and localization/delocalization phenomena that arise therein: see e.g.,~\cite{Bordenave-Guionnet-Localization-Delocalization,aggarwal2021goe}. 

Much of the interest in spin glasses in the last three decades is in their off-equilibrium dynamics. The complex energy and free energy landscapes of spin glasses (exponential in the dimension many local minima/metastable states~\cite{ABC13}) and complicated saddle geometries separating them, make dynamical analysis especially intricate. Conjectural consequences of this complexity include rich phenomena like \emph{metastability} and \emph{memory and aging} in the natural reversible off-equilibrium Markov processes (most notably, the Glauber dynamics in discrete state spaces and Langevin dynamics in continuous state spaces). 

At the most basic level, metastability can be described by a set of basins or wells, $(\mathcal C_i)_{i}$ in the state space such that the escape time of the dynamics from these wells is much larger than the time to fall into them. Given such a collection of metastable wells, one is then interested in developing a much more precise picture, in the spirit of Freidlin--Wentzell theory~\cite{FreidlinWentzell}: see~\cite{BovDen2015} for a book treatment. In particular, the asprirational results when concerning metastability for reversible stochastic dynamics are to say the dynamics inside a well (quasi-)equilibrate quickly compared to the transit times between wells. As a consequence, rescaling time by the typical transit time of a well of a certain depth, a projection of the dynamics which simply tracks which well the dynamics is in becomes asymptotically Markovian, with jumps between wells that are themselves exponentially distributed according to the heights of (free) energy barriers between the wells. See~\cite{BEGK-01,BEGK-02,BEGK-JEMS04,BGK-JEMS05} for references developing this rich theory for Markov processes in fixed dimension. 

Rigorous results of this form, showing metastable motion between wells in high-dimensional dynamics in statistical physics have been the subject of much interest for the last several decades, but are still somewhat rare. One notable series of work was in the context of ferromagnetic Ising Glauber dynamics on the lattice, in the zero-temperature limit~\cite{Neves-Schonmann-metastability,BenArous-Cerf-Metastability}. By approximating the random-field Curie--Weiss model's magnetization by a 1D chain,~\cite{BEGK-01} characterized its metastability. Of most relevance to the spin glass literature and the current work is the work~\cite{BABoGa1} where precise metastability, and in particular a coupling to a simplified trap model was shown for the Glauber dynamics for the random energy model (REM) (a simplified model of a spin glass landscape in which the energy function is i.i.d.\ Gaussian, and the lack of smoothness of the landscape means the wells are just single vertices in the hypercube). This latter work was essential to the establishment of aging for the Glauber dynamics of the REM in~\cite{BABoGa2}.

In this paper, we establish a precise metastability picture for the Glauber dynamics of the heavy-tailed spin glass models of~\cite{Cizeau}. Our results apply in a window of timescales that are exponential in the dimension, but pre-relaxation so the dynamics is out-of-equilibrium. For each timescale of the form $e^{N^\gamma}$, there is a relevant decomposition of the state space into wells $(\mathcal C_i)_{i}$ (each of exponential size) inside of which the mixing time is $o(e^{N^\gamma})$ whereas the transits between which are $\Theta(e^{N^\gamma})$. In particular, when time is rescaled as $s t_{a,\gamma} = e^{ a N^\gamma}$, the process $\SK(s t_{a,\gamma})$ which tracks the well to which the dynamics belongs is asymptotically approximated by  a Markov jump process $Y(s)$, whose rates are precisely governed by exponential in a certain free energy difference.

\subsection{Metastability and a Markov jump process between wells} 
We now get more precise in order to state our main results. The Levy spin glass model on $N$ vertices, defined in~\cite{Cizeau}, is defined by the following (random) Hamiltonian $H_{\mathbf{J}}$ on state space $\Sigma_N = \{\pm 1\}^N$,
\begin{align}
    H_{\mathbf{J}}(\sigma) = \sum_{1\le i<j\le N} J_{ij} \sigma_i \sigma_j\,, \qquad \text{where} \qquad J_{ij}=J_{ji} \sim \nu_{\alpha,N} \text{ are i.i.d.}\,,
\end{align}
where $d\nu_{\alpha,N}$ is in the domain of attraction of an $\alpha$-stable law  with $\alpha \in (0,2)$ and scaled to have typical size $O(N^{-1/\alpha})$. For concreteness, let us work with the density function 
\begin{align}\label{eq:coupling-distribution}
    d\nu_{\alpha,N}(x)= \frac{\alpha}{2N} |x|^{-(1+\alpha)}\mathbf{1}\{|x|\ge N^{-\frac{1}{\alpha}}\} dx\,,
\end{align}
though Remark~\ref{rem:general-J} explains how the proof applies more generally. Throughout the paper, let $\mathbb P_{\mathbf{J}}$ denote the law over $\mathbf{J} = (J_{ij})_{i<j}$, i.e., $\mathbb P_{\mathbf{J}} =  \nu_{\alpha,N}^{\otimes \binom{N}{2}}$. The Gibbs measure induced by this Hamiltonian at inverse temperature $\beta>0$ is the ($\mathbb P_{\mathbf{J}}$-random) measure
\begin{align}\label{eq:Gibbs-measure}
    \pi_{\beta,\mathbf{J}}(\sigma) \propto e^{\beta H(\sigma)}\,.
\end{align}
Recently,~\cite{JL} showed that when $\alpha\in (1,2)$ the limiting free energy exists, and ~\cite{CKS} gave an explicit characterization of the limiting free energy in the $\alpha \in (1,2)$ regime. In the $\alpha\in (0,1)$ regime,~\cite{CKS} showed that the free energy is dominated by the largest $O(1)$-many bonds amongst $\mathbf{J}$, all of which will tend to be satisfied in a typical sample from equilibrium: in particular, the partition function is exponential in $N^{1/\alpha}$ rather than $N$. 
These properties, and the satisfaction of the largest select number of bonds, will play a large role in the off-equilibrium  dynamics we study. 

The Glauber dynamics $(X(t))_{t\ge 0}$ are the continuous-time Markov chain which assigns each of the sites in $[N]$ a rate-1 Poisson clock, and if the clock at time $t$ rings at a vertex $v$, it resamples 
\begin{align*}
    X_{v}(t) \sim \pi_{\beta,\mathbf{J}}\big(\sigma_v\in \cdot \mid \sigma_{w} = X_w(t^-) \text{ for }w\ne v\big)\,;
\end{align*}
and keeps all other spins unchanged, i.e., $X_w(t) = X_{w}(t^-)$ for $w\ne v$. This Markov chain is easily checked to be reversible with respect to $\pi_{\beta,\mathbf J}$ and since its introduction in 1963 by~\cite{Glauber}, has been considered the prototypical model for out-of-equilibrium dynamics of statistical mechanics systems. 

The mixing time, or time for the law of $X(t)$ to approximate $\pi_{\beta,\mathbf{J}}$ from a worst-case initialization in total-variation distance, can be shown to be $\exp(2\beta(1+o(1))\|\J\|_\infty)$, as it is governed by the time for the spins incident to the largest bond $J_e$ in $\mathbf{J}$ to flip from $\{+,+\}$ (resp., $\{+,-\}$) to $\{-,-\}$ (resp., $\{-,+\}$) if $J_e>0$ (resp., $J_e<0$): indeed, this is a special case of our Theorem~\ref{thm:mixing-below-frozen-bond}. This is one of many sources of exponentially slow mixing, e.g., for a large bond $e$,  the transit between the two satisfying assignments of spins to its endpoint vertices, takes an exponentially long time in $2\beta |J_e|$. These large (but not largest) bonds will serve as the source of metastability on activated (exponential) but pre-equilibration timescales. Of course, each such transit also depends on the values of all spins on the complement of the large bonds, and much of the work in this paper is to show that those coordinates all quasi-equilibrate (conditional on the values of the largest bonds) on faster timescales than the transits between satisfying assignments on the largest bonds. 

To state our main theorems, suppose we are interested in understanding metastability on timescales greater than  $t_{a,\gamma} = e^{a N^\gamma}$ for a fixed $\gamma<1/\alpha$. This will dictate the minimal resolution at which we observe the skeleton process tracking which well the Glauber dynamics is in. 
\begin{definition}\label{def:sat-assignments}
    For a realization of $\mathbf{J}$, for fixed $(a,\gamma)$, the $t_{a,\gamma}$-relevant coordinates are  
    \begin{align}\label{eq:t-relevant-coordinates}
        V_{a,\gamma} = \{i:  (i,j) \in E_{a,\gamma} \text{ for some $j$}\}\,, \qquad \text{where} \qquad E_{a,\gamma} = \{ (i,j) : |J_{ij}| \ge \frac{a}{2\beta} N^\gamma\}\,.
    \end{align}
    We can also define $K=|E_{a,\gamma}|$, and observe that $K \approx  \frac{(2\beta)^{\alpha}}{2a^{\alpha}} N^{1-\alpha \gamma}$. 
\end{definition}

The $t$-relevant coordinates are the ones that are unlikely to flip on $o(t)$ times, representing the directions where exponential in the energy barrier height between wells is at least of order $t$. 
We use the $t$-relevant coordinates to define the $t$-metastable wells. In what follows we enumerate the bonds $J_{ij}$ in decreasing order (in absolute value) as $\abs{J_{(1)}} > \abs{J_{(2)}} > ... \abs{J_{(\binom{N}{2})}}$. It is easy to check that for $\gamma \in (\frac{1}{2\alpha},\frac{1}{\alpha})$, with $\mathbb P_{\mathbf{J}}$-probability $1-o(1)$, the edges $e_1,...,e_K$ are vertex disjoint (see Lemma~\ref{lem:energy-drop-off-J}). {Henceforth, we will work on this event, and therefore} we may label the corresponding edges $e_l=(v_l,w_l)$ for the edge such that $J_{e_l}=J_{(l)}$, and have $\{v_l,w_l\} \cap \{v_{l'},w_{l'}\} =\emptyset$ for all $1\le l<l'\le K$. 

\begin{definition}
    For a spin assignment $\sigma$, we let $\sat(\sigma)$ be the set of satisfied bonds, i.e., 
    \begin{align}\label{eq:sat(sigma)}
        \sat(\sigma) = \{e={(i,j)}: \sigma_i \sigma_j = \sgn{J_{ij}}\}\,.
    \end{align}
    For a relevant timescale $t_{a,\gamma}$, consider $\{\sigma:  \sat(\sigma) \supset E_{a,\gamma}\}$ and let $K = |E_{a,\gamma}|$. This set breaks into $2^{K}$ connected components of the hypercube (because the two satisfying assignments on an edge are not adjacent in $\{\pm 1\}^N$); call these components $(\mathcal C_i)_{i}$. 

    Each $t$-metastable well $\mathcal C_i$ is naturally identified with an element of $\{\pm 1\}^{K}$ by associating $\mathcal C_i$ with the vector given by $(\sigma_{v_1},...,\sigma_{v_K})$ (notice that given $\mathbf J$, the spin on $w_1,...,w_K$ is determined by this and the fact that all these edges are satisfied in $\mathcal C_i$). With this identification, {we say $\sigma$ is in a $t$-metastable well if $\sat(\sigma) \supset E_{a,\gamma}$, and when} $\sigma$ is in one of the $t$-metastable wells, we can let $\mathsf{S}(\sigma) \in \{\pm 1\}^{K}$ tell us what well $\sigma$ belongs to. 
\end{definition} 

Our main theorem says that the projection of the Glauber dynamics at a resolution of timescales greater than $t_{a,\gamma}$ asymptotically behaves like a Markov chain $Y$  on $\{\mathcal C_i\}_i$ (equivalently on $\{\pm 1\}^K$) with explicit transit rates between the metastable wells. To define the transition rates for $Y$, for a vertex $v \in \{1,...,N\}$ and a configuration $\sigma \in \{\pm 1 \}^N$ we define: 
\begin{align}
    \label{eq:def-of-Z-and-m} 
    Z_v(\sigma) = \exp(-2\beta  m_v(\sigma)) \qquad \text{where}\qquad  
    m_v(\sigma) = \sum_{w \neq v} J_{vw} \sigma_v \sigma_w \, .
\end{align}

The Markov chain $Y(s)$ on wells that will approximate $\SK(X(st_{a,\gamma}))$ is the following. 
\begin{definition}
    For any $a,\gamma$, {recall that} $K = |E_{a,\gamma}|$. Define the approximating Markov process $Y(s)$ on $\{ \pm 1 \}^K$ as follows. {From an initialization $Y(0)$,} if $Y$ is in state $Y(s)$ at time~$s$, 
    \begin{enumerate}
        \item Assign its $l$'th coordinate a Poisson clock of rate ${t_{a,\gamma} Z_l(Y(s))}$, where 
        \begin{align}\label{eq:Z(Y_s)}
            Z_{l}(Y) = { \E_{\pi_{\beta,\mathbf{J}}}} \big[ Z_{v_l}(\sigma) + Z_{w_l}(\sigma) \,|\, E_{a,\gamma} \subset \mathsf{Sat}(\sigma)\,,\,\mathsf{S}(\sigma) = Y \big]\,.
        \end{align}       
        \item When the clock of a coordinate {$l$ rings, resample the value of $Y_l$} uniformly from~$\{\pm 1\}$. 
    \end{enumerate}
\end{definition}

Having constructed $Y(s)$ we may now state our main theorem, which will apply to the range of $\gamma$'s in the open interval 
$${\gamma \in (\gamma_0,\frac{1}{\alpha})\qquad \text{where} \qquad\gamma_0 = \frac{5\alpha+1}{2\alpha(2\alpha+1)}< \frac{1}{\alpha}\,.}$$
(Notice that this interval length vanishes as $\alpha \uparrow 1$ and diverges as $\gamma \downarrow 0$.)

\begin{theorem}\label{thm:main1}
    For any $\alpha\in (0,1)$, with $\mathbb{P}_{\mathbf{J}}$-probability $1-o(1)$, the following holds. Fix any $\beta>0$, any $a>0$, and $\gamma \in (\gamma_0, \frac{1}{\alpha})$ and {recall that} $K = |E_{a,\gamma}|$. 

    Let $(X(t))_{t\ge 0}$ be Glauber dynamics initialized from $X(0) \sim \text{Unif}(\Sigma_N)$ and let $\mathsf{S}(t) = \mathsf{S}(X(t))$. 
    The finite-dimensional distributions of process $(\mathsf{S}(st_{a,\gamma}))$ are within total-variation distance $o(1)$  of the finite dimensional distributions of $Y(s)$ with $Y(0)\sim \text{Unif}(\{\pm 1\}^K)$.    
     To be more precise, if we pick any finite number of times $s_1<...<s_n$ (that may depend on $\J$ and $N$ arbitrarily so long as {there is a constant $c>0$ such that $|s_{i} - s_j|>c$ for all $N$ and all $i,j$}),  
     \begin{align*}
         d_\tv\Big( \mathbb P\big(\mathsf{S}({s_1 t_{a,\gamma}})\in \cdot ,...,\mathsf{S}({s_n t_{a,\gamma}})\in \cdot\big), \mathbb P\big(Y({s_1})\in \cdot,...,Y({s_n})\in \cdot\big)\Big) = o(1)\,.
     \end{align*}
\end{theorem}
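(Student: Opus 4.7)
The proof proceeds by establishing a separation of timescales: the Glauber dynamics quasi-equilibrates inside each $t_{a,\gamma}$-metastable well on a scale $t_{\mathrm{in}} = o(t_{a,\gamma})$, while the typical exit time from a well is $\Theta(t_{a,\gamma})$. This implies that conditional on the current well $\mathsf{S}(X(t))$, the configuration $X(t)$ is well-approximated by the quasi-stationary measure $\pi_{\beta,\mathbf{J}}(\,\cdot\, \mid \mathsf{S}(\sigma) = \mathsf{S}(X(t)))$, so that the rates of transition to adjacent wells are a deterministic function of the current well only. To quantify this, I would use the mixing bound for frozen-bond Glauber dynamics mentioned before the theorem statement, applied to the dynamics conditioned on the $K$ large bonds of $E_{a,\gamma}$ being satisfied; this shows that for any time $s_j t_{a,\gamma}$ with $s_j$ bounded below by a constant, and conditional on $\mathsf{S}(X(s_j t_{a,\gamma})) = Y$, the remaining (internal) spins of $X(s_j t_{a,\gamma})$ are within $o(1)$ total-variation distance of $\pi_{\beta,\mathbf{J}}(\,\cdot\, \mid \mathsf{S}(\sigma) = Y)$.

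Next I would compute the rate at which $X(t)$ transitions between adjacent wells starting from quasi-equilibrium within $Y$. The Glauber rate of flipping a spin $v$ at configuration $\sigma$ equals $Z_v(\sigma)/(1+Z_v(\sigma)) = (1+o(1)) Z_v(\sigma)$ whenever $m_v(\sigma) \gg 1$, which holds for $v \in \{v_l, w_l\}$ in any well since the local field is dominated by $|J_{e_l}| \ge \tfrac{a}{2\beta} N^\gamma$. A transition from $Y$ to the adjacent well $Y'$ (differing only in coordinate $l$) requires flipping both $v_l$ and $w_l$; a short computation of the field at each endpoint after the first flip (approximately $\mp |J_{e_l}|$, with a negligible error from the internal spins) shows that the probability the second endpoint flips before the first one flips back is $\tfrac{1}{2} + o(1)$. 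Averaging over $\sigma$ under the quasi-stationary measure, the resulting real-time rate of transition from $Y$ to $Y'$ is $(1+o(1)) \tfrac{1}{2} Z_l(Y)$, which, after rescaling time by $t_{a,\gamma}$, matches the effective flip rate of $Y(\cdot)$'s $l$-th coordinate (Poisson clock of rate $t_{a,\gamma} Z_l(Y)$ with probability $1/2$ of flipping upon a ring).

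For finite-dimensional convergence, I would iterate over the time points $s_1 < \cdots < s_n$. At the base case, starting from $X(0) \sim \mathrm{Unif}(\Sigma_N)$, the dynamics falls into $\bigcup_i \mathcal{C}_i$ in polynomial time, and by the symmetry of the initial distribution together with vertex-disjointness of the $2K$ endpoints $(v_l, w_l)$, the resulting well is approximately $\mathrm{Unif}(\{\pm 1\}^K)$, matching $Y(0)$. For the inductive step, the gap condition $s_{i+1} - s_i \ge c$ guarantees at least $c \cdot t_{a,\gamma}$ of real time between consecutive snapshots, which is long enough both for (i) any well-transition during $[s_i t_{a,\gamma}, s_{i+1} t_{a,\gamma}]$ to be followed by within-well re-equilibration, and (ii) the conditional law of $\mathsf{S}(X(s_{i+1} t_{a,\gamma}))$ given the past to depend, up to $o(1)$, only on $\mathsf{S}(X(s_i t_{a,\gamma}))$, with transition probabilities matching those of $Y$ by the rate computation.

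The main technical obstacle is the quantitative within-well mixing in the first step, uniformly over the $2^K$ wells and typical initial configurations. Inside each well there is a hierarchy of secondary large bonds with $|J_e| = \Theta(N^{\gamma'})$ for $\gamma' < \gamma$, whose effect on mixing must be shown not to push the in-well mixing time up to the scale $t_{a,\gamma}$. Relatedly, one must control $Z_v(\sigma) = \exp(-2\beta m_v(\sigma))$ for $v \in \{v_l, w_l\}$ under $\pi_{\beta,\mathbf{J}}(\,\cdot\,\mid \mathsf{S}(\sigma)=Y)$ tightly enough for the time-averaged and $\pi$-averaged rates to agree; the heavy-tailed coupling distribution makes this control delicate, and it is precisely here that the lower bound $\gamma > \gamma_0$ on the regime of applicability enters.
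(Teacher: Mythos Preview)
Your outline is in the same spirit as the paper's proof and correctly identifies the two pillars (fast mixing inside a well per Theorem~\ref{thm:mixing-within-a-well}/\ref{thm:mixing-below-frozen-bond}, and concentration of the time-averaged flip rate around its Gibbs average per what the paper calls Lemma~\ref{lem:Comparison-of-rates}). Your rate computation, the $\tfrac{1}{2}$ from the second endpoint flipping before the first reverts, and the base case via symmetry of the uniform start are all exactly what the paper does.

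There is, however, a structural point you are missing that the paper treats as the heart of the coupling. The theorem allows the $s_i$ to depend on $N$, so the increment $s=s_{i+1}-s_i$ can be anywhere from $O(1)$ up to $e^{\Theta(N^{1/\alpha})}/t_{a,\gamma}$. Over such an interval the $K$ coordinates of $\mathsf{S}$ do not behave homogeneously: since $t_{a,\gamma} Z_l(Y)\asymp e^{aN^\gamma - 2\beta|J_{(l)}|}$ and the $|J_{(l)}|$ for $1\le l\le K$ are spread out (by the spacing Lemma~\ref{lem:spacing-of-J}), some coordinates will have flipped a super-polynomial number of times while others not at all. The paper handles this by introducing an index $L=L(s)$ such that coordinates $1,\ldots,L-1$ are (with high probability) frozen over the interval, coordinates $L+1,\ldots,K$ have fully mixed given the top $L$, and only the single ``critical'' coordinate $L$ is on the borderline timescale. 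This critical coordinate is then coupled by reducing both $\mathsf{S}_L$ and $Y_L$ to a common two-state chain with rates $\bar\lambda_\pm$ and $\bar Z_\pm$ (Propositions~\ref{prop:Behavior-of-S}--\ref{prop:behavior-of-Y}), and these rates are shown to agree up to $e^{-cN^\gamma}$.

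Without this $L(s)$ decomposition, your inductive step (ii) does not close: saying ``transition probabilities match those of $Y$'' is vacuous once multiple coordinates flip many (but non-equilibrating) times within $[s_i t_{a,\gamma}, s_{i+1}t_{a,\gamma}]$, because you would then need to couple an unbounded sequence of transitions across several coordinates simultaneously, and the per-transition $o(1)$ errors would not sum. The spacing lemma is precisely what guarantees there is at most one critical coordinate at each scale, reducing the problem to a single two-state coupling plus an invocation of the mixing bound on the lower coordinates. You should make this explicit.
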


\begin{remark}\label{rem:number-of-times}
    The number of times $n$ taken in the previous theorem can in fact be anything sub-exponential in $N$ and the result still holds true. On the other hand, if $n$ is larger than exponentially large in $N^\gamma$, there will be times when the well process does not take a value as the Glauber dynamics is between wells, and thus such an approximation could not possibly hold. 
\end{remark}

Theorem~\ref{thm:main1} characterizes the projection of the Markov chain into its metastable wells as itself a Markov process. Moreover, this holds for a wide-range of pre-equilibration activated timescales. And, as predicted by the theory of metastability, the jump times of that latter Markov process, between wells are governed by the easiest paths of transit between them in the restricted state space.  We emphasize that since the $s_i$ may depend on $N$ and the realization of $\J$, the theorem encapsulates looking exactly on the timescale of the transits, where the typical number of transits between $s_i$ and $s_{i+1}$ will be a tight Poisson random variable. These transition rates, depend primarily on the bond weight corresponding to that specific edge whose satisfying state has to be flipped, but also depend on the stationary law given the values in the other $t$-metastable coordinates (i.e., which well it is currently in). Notably, however, it is the stationary measure in that well, because the mixing time within the well is much faster than the relevant between-well transit timescale: a central feature of metastability.

\begin{remark}\label{rem:temperature-transition}
        In the context of Theorem~\ref{thm:main1}, there is a ``high-temperature" regime of $\beta<\beta_0$ where 
        \begin{align}\label{eq:beta-0}\beta_0 := \sup\{\beta: (2\beta)^{\alpha} \Gamma(1-\alpha) <1\} = \frac{1}{2 \Gamma(1-\alpha)^{1/\alpha}}\,,\end{align}
        where $\Gamma(\cdot)$ is the Gamma function. Then the stationary measure of $Y$ is uniform on $\{\pm 1\}^K$ (see Theorem~\ref{thm:high-temp-regime}), and Theorem~\ref{thm:main1} would hold if the rates in~\eqref{eq:Z(Y_s)} were replaced by $e^{-2\beta |J_{(l)}|}$. In that case, the rates would be independent of which well the configuration is in, and the skeleton process simply performs a simple random walk with different rates for different coordinates.   
\end{remark}

\subsection{Behavior of two-time auto-correlation functions}

As mentioned in the discussion of the REM, predictions of metastability in spin glass dynamics are closely related to the predicted phenomenon of aging. Roughly speaking, aging can be summarized by the phrase \emph{the older a system is, the longer it takes to forget its past}. 
This is often quantified in terms of some kind of two-time correlation function $C(s,t)$ decaying not with $t-s$ in the limit that $s,t$ get large together, but as a function of $t/s$ or perhaps even more extreme. This has led to a host of predictions of universal aging phenomena that arise in off-equilibrium dynamics of spin glasses. 

The timescales most pertinent to the metastability picture are activated (exponentially large, but pre-mixing) timescales. Here, the mechanism for  aging in spin glasses can be described precisely assuming a sharp metastability picture is known for the dynamics: the dynamics jumps between wells of different depths, each requiring different activation energies to leave, and there being exponentially more wells with shallower barriers. Each well traps the dynamics for time exponential in its (free) energy barrier, and when the dynamics escapes the well, it picks a next random well to fall into, so that at longer and longer timescales, the typical dynamics is trapped in a deeper and deeper well. 
The fact that at low-temperatures mixing is exponentially slow due to deep wells that trap the dynamics is now known for many of the canonical mean-field spin glass models (see~\cite{GhJa19,BAJ-low-temp-spin-glass-dynamics-1,BAJ-low-temp-spin-glass-dynamics-2}) but the full metastability picture requires much more refined information than one bottleneck, namely understanding both the equilibration properties within a well, and identifying sharp exponential rates for transit times between different wells. 

Using the sharp metastability result of Theorem~\ref{thm:main1}, one can precisely characterize the behavior of the two-time autocorrelation function on activated timescales {(although in our actual proof, it is easier to arrive at this theorem earlier than the full metastability result: see the beginning of Section~\ref{sec:proof-of-main-results})}. For this purpose, define
\begin{align}\label{eq:auto-correlation}
     C_N(t_N, s t_N) = \frac{1}{N} \sum_{v} X_v(t_N) X_v(s t_N)\,.
\end{align}

\begin{theorem}\label{thm:main-autocorrelation}
    Fix any $\beta>0$, $\gamma \in ({\gamma_0}, \frac{1}{\alpha})$ and let $t_{a,\gamma} = e^{ a N^{\gamma}}$. With $\mathbb P_{\mathbf{J}}$-probability $1-o(1)$, if $X(0)\sim \text{Unif}(\Sigma_N)$, for any fixed $s>1$, 
    \begin{align*}
        \TV{\text{Law}(C_N(t_{a,\gamma}, st_{a,\gamma}))}{ q_{a,\gamma}^{(N)}} = o(1)\,,
    \end{align*}
    where the limiting distribution is 
    \begin{align*}
        q_{a,\gamma}^{(N)} = \text{Law}_{a,\gamma}(\langle \sigma, \sigma'\rangle_N ) 
    \end{align*}
    where $\sigma, \sigma'$ are independent samples from $\pi_{\beta,\mathbf{J}} ( \cdot \mid \mathcal C_i)$ for a uniform-at-random chosen $\mathcal C_i$ among the set of $t_{a,\gamma}$-metastable wells.  
\end{theorem}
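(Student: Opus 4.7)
The plan is to reduce the autocorrelation $C_N(t_{a,\gamma}, st_{a,\gamma})$ to an inner product of conditionally independent samples from the well-conditional Gibbs measures, and then to show this inner-product distribution matches $q_{a,\gamma}^{(N)}$ in TV. First I would show that starting from $X(0)\sim\text{Unif}(\Sigma_N)$, by time $t_{a,\gamma}$ the dynamics lies in a $t_{a,\gamma}$-metastable well $\mathcal C_i$ that is approximately uniform on $\{\pm 1\}^K$, with conditional law within $o(1)$ TV of $\pi_{\beta,\mathbf{J}}(\cdot\mid\mathcal C_i)$. On the $\mathbb P_{\mathbf J}$-probable event that the edges in $E_{a,\gamma}$ are vertex disjoint, any initially unsatisfied large bond in $E_{a,\gamma}$ is satisfied at the first ring of either endpoint (an $O(1)$-time event) and, once satisfied, stays so up to time $t_{a,\gamma}$ with probability $1-o(1)$ since $e^{2\beta|J_e|}\geq t_{a,\gamma}$. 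Symmetry between the two satisfying orientations of each disjoint endpoint pair makes $\mathsf{S}(X(t_{a,\gamma}))$ asymptotically uniform on $\{\pm 1\}^K$. The within-well mixing estimates---obtained by applying Theorem~\ref{thm:mixing-below-frozen-bond} to the dynamics restricted to a single well, whose largest bond has size at most $|J_{(K+1)}|<\frac{a}{2\beta}N^\gamma$---provide within-well mixing time $o(t_{a,\gamma})$, yielding equilibration to $\pi_{\beta,\mathbf{J}}(\cdot\mid\mathcal C_i)$. Applying the Markov property at $t_{a,\gamma}$ and rerunning the same reasoning on $[t_{a,\gamma}, st_{a,\gamma}]$ (whose length $(s-1)t_{a,\gamma}$ still dominates within-well mixing) yields the analogous statement at time $st_{a,\gamma}$ for its well $\mathcal C_j$, together with conditional near-independence of $X(t_{a,\gamma})$ and $X(st_{a,\gamma})$ given $(\mathcal C_i, \mathcal C_j)$.

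With this coupling to $(\hat\sigma,\hat\sigma')$ where $\hat\sigma\sim\pi_{\beta,\mathbf{J}}(\cdot\mid\mathcal C_i)$ and $\hat\sigma'\sim\pi_{\beta,\mathbf{J}}(\cdot\mid\mathcal C_j)$ are conditionally independent given $(\mathcal C_i,\mathcal C_j)$, I would decompose
\[
C_N = \frac{1}{N}\sum_{v\in V_{a,\gamma}}\hat\sigma_v\hat\sigma'_v + \frac{1}{N}\sum_{v\in V_{a,\gamma}^c}\hat\sigma_v\hat\sigma'_v + o(1)\,.
\]
The first term is $O(K/N)=O(N^{-\alpha\gamma})=o(1)$ since $|V_{a,\gamma}|=2K$. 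The second term is a sum of $N-2K$ conditionally-independent bounded variables with conditional means $m_v(\mathcal C_i)m_v(\mathcal C_j)$, where $m_v(\mathcal C)=\E_{\pi_{\beta,\mathbf{J}}(\cdot\mid\mathcal C)}[\sigma_v]$, and concentrates around $N^{-1}\sum_v m_v(\mathcal C_i)m_v(\mathcal C_j)$ with fluctuations on scale $N^{-1/2}$.

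The main obstacle is to match this distribution in TV with $q_{a,\gamma}^{(N)}$, in which both configurations come from the \emph{same} well. Crucially, the skeleton process on $\{\pm 1\}^K$ makes $\Theta(K)$ transitions per unit of rescaled time, so for $s>1$ the wells at times $t_{a,\gamma}$ and $st_{a,\gamma}$ are typically distinct; the matching therefore requires showing that the averaged product $N^{-1}\sum_v m_v(\mathcal C_i)m_v(\mathcal C_j)$ for $\mathcal C_i\neq\mathcal C_j$ has (asymptotically) the same distribution as the diagonal case $\mathcal C_i=\mathcal C_j$. By Jensen's inequality this reduces to showing $N^{-1}\sum_v \mathrm{Var}_{\mathcal C}(m_v(\mathcal C))=o(1)$, i.e., that the per-vertex magnetization is essentially insensitive to the well. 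The intuition is that $V_{a,\gamma}^c$ couples to $V_{a,\gamma}$ only through small bonds $|J_{vw}|<\frac{a}{2\beta}N^\gamma$, so switching wells perturbs the effective field $h_u(\mathcal C)=\sum_{v\in V_{a,\gamma}}J_{uv}\sigma_v(\mathcal C)$ on each $u\in V_{a,\gamma}^c$ by an amount whose cumulative effect on the averaged product is negligible. Making this quantitative at the TV level (not merely weak convergence), uniformly over pairs of wells, is the chief technical challenge.
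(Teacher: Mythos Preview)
Your first paragraph is essentially the paper's argument and is correct as far as it goes, but you then take a wrong turn based on a miscalculation of the timescales. Your claim that ``the skeleton process on $\{\pm 1\}^K$ makes $\Theta(K)$ transitions per unit of rescaled time, so for $s>1$ the wells at times $t_{a,\gamma}$ and $st_{a,\gamma}$ are typically distinct'' is false for fixed $s$. The gap estimate (Lemma~\ref{lem:gaps}, item~(1)) gives, with $\mathbb P_{\mathbf J}$-probability $1-o(1)$, that $|J_{(K)}| \ge \tfrac{a(1+\vartheta_N)}{2\beta}N^\gamma$ with $\vartheta_N = N^{-\xi}$. Hence every bond in $E_{a,\gamma}$ has flip rate at most $e^{-a(1+\vartheta_N)N^\gamma + CN^{1/2\alpha}}$, and on the window $[0,st_{a,\gamma}]$ the expected number of skeleton flips is at most $sK e^{-aN^{\gamma-\xi}+CN^{1/2\alpha}} = o(1)$ since $\gamma-\xi > \tfrac12+\tfrac1{2\alpha}$. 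In other words, Lemma~\ref{lem:spin-structure} applies directly with $L=K$ and $T=st_{a,\gamma}$, and the well at time $st_{a,\gamma}$ is the \emph{same} well $\mathcal C_i$ as at time $t_{a,\gamma}$ with probability $1-e^{-cN^{\gamma-\xi}}$.

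Once you know $\mathcal C_i = \mathcal C_j$, the matching to $q_{a,\gamma}^{(N)}$ is immediate from your first paragraph: $(X(t_{a,\gamma}),X(st_{a,\gamma}))$ is coupled within $o(1)$ TV to a pair of conditionally independent draws from $\pi_{\beta,\mathbf J}(\cdot\mid\mathcal C_i)$ with $\mathcal C_i$ uniform, which is exactly the law defining $q_{a,\gamma}^{(N)}$. Your entire third paragraph---the ``chief technical challenge'' of showing cross-well and same-well overlaps agree in TV---is addressing a nonexistent obstacle; the paper's proof never needs any well-insensitivity of magnetizations.
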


Either the replica quantity $q_{\alpha,\gamma}^{(N)}$ is zero, as it would be in the high-temperature regime $\beta<\beta_0$ (see Proposition ~\ref{prop:law-q-high-temp}), or it may be non-zero but still $s$-independent (only depending on the activated timescale we choose).  
In particular, the $s$-independence indicates that in heavy-tailed spin glass dynamics, hallmarks of aging seen by the autocorrelation function would require a different time-rescaling from the multiplicative one. This is basically due to the fact that the scaling of the minimal energy barriers between different wells are to first order the same regardless of which well the dynamics is residing in (in contrast to classical spin glasses where the wells' minimal energy barriers span a range). 
 Understanding this better, as well as determining the law of the overlap quantity $q_{a,\gamma}^{(N)}$ at large $\beta$ are interesting questions. 

\medskip
 Let us conclude by describing some other forms of aging statements that appear in the literature. A standard approach in the mathematics literature to investigating activated aging is to look at the quantity $\p( X(t_N) \ne X(st_N) )$ in lieu of $C_N$.  Bouchaud~\cite{Bouchaud} introduced a trap model, serving as a toy model for metastable dynamics, where wells are mimicked by single vertices, and upon escaping a well, the next well to fall into is simply chosen uniformly at random. The limit of the function $\p( X(t_N) \ne X(st_N))$  is then given by an appropriately chosen arcsine law. 
 Since that work, various high-dimensional dynamics have been shown to belong to its universality class; this in particular includes random hopping time dynamics (where simple random walk is time-changed by the landscape, but the path itself is independent of the landscape) for the REM, and later for $p$-spin models~\cite{BoGa,BoGaSv,BenArous-Gun-universal-aging}. 
In terms of Glauber/Langevin-type dynamics the only rigorous results for aging in the activated timescales are those of Gayrard~\cite{Gayrard-Aging-REM} for the random energy model (REM) where the energy is i.i.d.\ Gaussians, and the deep wells are still single vertices in the state space $\{\pm 1\}^N$. However, when considering Glauber dynamics on smoother landscapes (e.g., $p$-spin models, or the models in this paper) many coordinates flip in $O(1)$ timescales and wells have exponential volume in $\{\pm 1\}^N$, and so $\p( X(t_N) \ne X(st_N) )$ is essentially instantaneously zero for $s>1$.

Another form of aging that necessitates mention, is \emph{short-time} aging (where $C_N(s,t)$ is studied in the $N \to \infty$ while $t =O(1)$ regime). There, the mechanism for the aging is very different from activated aging and is not related to metastability. Short-time aging has been predicted by means of the Cugliandolo--Kurchan--Crisanti--Horner--Sommers (CKCHS) equations~\cite{CugKur93,Crisanti1993} in spherical $p$-spin glasses. The rigorous mathematics literature does not establish the aging picture, but there has been some significant progress: see~\cite{BDG01,BADG06} for the rigorous establishing of the CKCHS equations for the $N\to\infty$ limit of $C_N(s,t)$, and~\cite{BGJ18a,DeSu20,DeGh21,sellke2024threshold} for a sampling of other works examining short-time spherical spin glass dynamics.  

\begin{remark}\label{rem:general-J}
    All our results are stated for the explicit heavy-tailed law of~\eqref{eq:coupling-distribution}, but in fact hold in much greater generality for heavy-tailed distributions. Namely, replacing~\eqref{eq:coupling-distribution} with symmetric random variables $X_{ij}$ satisfying:
    \[
\p (\abs{X_{ij}} >t ) = \frac{L(t)}{t^{\alpha}}\,,
    \]
    for $L$ a slowly varying function, then all result can be checked to hold provided we define the Hamiltonian by:
    \[
H_N(\sigma) = \frac{1}{b_N} \sum_{1 \leq i < j \leq N} X_{ij} \sigma_i \sigma_j\,,
    \]
    where $b_N$ is defined by:
    \[
b_N = \inf \big\{ t \geq 0: \p( \abs{X_{ij}} > t) \leq \frac{1}{N} \big\}\,.
    \]
The minor steps needed to generalize our lemmas in Section~\ref{sec:coupling-matrix-preliminaries} on properties of $\mathbf{J}$ can be found in \cite{AS,CKS}, and the many sources therein. The lemmas in Section~\ref{sec:coupling-matrix-preliminaries} are then the only way in which the law of $\mathbf{J}$ influences our main results.
\end{remark}

\subsection{Proof ideas}
    In this section, we overview the key steps in the proof of Theorem~\ref{thm:main1}, and simultaneously further explore details of the precise metastability picture we establish for the Glauber dynamics. The key part of establishing metastability is two-fold: 
    \begin{enumerate}
        \item The mixing time interior to a well $\mathcal C_i$ (in our context, given all edges in $E_{a,\gamma}$ are frozen to a specific satisfying realization) is faster than the time to escape $\mathcal C_i$; 
        \item The time spent in between wells is negligible, and the relative rates of transit between wells is fully governed by which of the edges of $E_{a,\gamma}$ flips to unsatisfied and the stationary field on the endpoints of that edge.   
    \end{enumerate}
    Given these ingredients, the transit rates can be treated as Gibbs averages of the rates for the $l$'th edge in $E_{a,\gamma}$ to become unsatisfied, as claimed by Theorem~\ref{thm:main1}. In particular, when looking at timescales on the order of  $t_{a,\gamma}$  or greater, the coordinates that are not $t_{a,\gamma}$-relevant can be treated as instantaneously  being sampled from stationarity on the current well the dynamics is in, and transits between wells can also be treated as happening instantaenously.  

    The statement of item (1)  above can be viewed as independently interesting as a statement of mixing times when restricted to a phase, and can be formulated as follows. 

    \begin{theorem}\label{thm:mixing-within-a-well}
        { Let $\gamma \in (\gamma_0,\frac{1}{\alpha})$ and $a>0$, and recall} $t_{a,\gamma} = e^{aN^{\gamma}}$. { For} any $t_{a,\gamma}$-metastable well $\mathcal C_i$, the mixing time of the Glauber dynamics restricted to $\mathcal C_i$ is $o(t_{a,\gamma})$; in fact it is at most $e^{- N^{\eta}} t_{a,\gamma}$ for some $\eta>0$. On the other hand, the escape time from $\mathcal C_i$ is at least $\Theta(t_{a,\gamma})$. 
    \end{theorem}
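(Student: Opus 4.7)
The plan is to reduce the restricted Glauber dynamics on $\mathcal C_i$ to Glauber dynamics on a sub-system carrying no coupling of magnitude $\ge\frac{aN^\gamma}{2\beta}$, and then to exploit a high-probability gap in the empirical distribution of $\mathbf J$ just below that threshold. First, since $e_1,\dots,e_K$ are vertex-disjoint on the event we are working on, flipping any single spin in $V_{a,\gamma}$ instantly exits $\mathcal C_i$; consequently the dynamics censored to $\mathcal C_i$ is equivalent to Glauber dynamics on the $N-2K$ free spins under a reduced Hamiltonian
\[
H_{\mathrm{res}}(\tau)=\sum_{\substack{(i,j)\notin E_{a,\gamma} \\ i,j\text{ free}}}J_{ij}\tau_i\tau_j \; + \; \sum_{i\text{ free}} h_i\tau_i,
\]
where the site-wise fields $h_i$ come from the frozen boundary on $V_{a,\gamma}$. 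These fields are pure single-site biases; they do not generate pairwise energy barriers, which are the bottlenecks driving Glauber mixing, so only the largest retained pairwise coupling, of magnitude $|J_{(K+1)}|$, can be responsible for the restricted chain's slow modes.

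Next, I would invoke an iterated/generalized version of Theorem~\ref{thm:mixing-below-frozen-bond} to conclude that the mixing time of this restricted chain is at most $e^{2\beta|J_{(K+1)}|(1+o(1))}$. The heuristic is exactly that of the unrestricted case recalled in the introduction: the slowest transit in Glauber for a spin glass is the coordinated flip of the two spins incident to the largest coupling, and after freezing the top-$K$ bonds that role is inherited by the pair across $|J_{(K+1)}|$.

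To convert this into the claimed bound I would use heavy-tail order-statistics to show that, with $\mathbb P_{\mathbf J}$-probability $1-o(1)$, $|J_{(K+1)}|\le\frac{aN^\gamma-N^\eta}{2\beta}$ for some $\eta>0$. Indeed, the number of couplings with $|J_{ij}|\in[\frac{aN^\gamma-N^\eta}{2\beta},\frac{aN^\gamma}{2\beta}]$ is approximately Poisson with mean of order $N^{1+\eta-(1+\alpha)\gamma}$, which is $o(1)$ whenever $\eta<(1+\alpha)\gamma-1$, a condition compatible with $\gamma>\gamma_0$ since $\gamma_0>\frac{1}{1+\alpha}$. Combined with the previous step this yields a restricted mixing time at most $e^{aN^\gamma-N^\eta(1+o(1))}=e^{-N^\eta(1+o(1))}t_{a,\gamma}$. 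For the escape-time lower half, any exit from $\mathcal C_i$ must flip some $v\in V_{a,\gamma}$ incident to a bond $e_l\in E_{a,\gamma}$; while the configuration still lies in $\mathcal C_i$ the local field satisfies $|m_v|\ge|J_{(l)}|-O(1)$, so the Poisson flip rate at $v$ is bounded by $e^{-2\beta|J_{(l)}|}\le e^{-aN^\gamma}$, and summing the $2K=\mathrm{poly}(N)$ such contributions gives an expected escape time of order at least $t_{a,\gamma}$.

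The main obstacle is the restricted mixing-time bound. After removing the top $K$ couplings many couplings of comparable magnitude to $|J_{(K+1)}|$ remain, each producing its own partial bottleneck, so a naive canonical-paths or one-step coupling argument cannot close the gap on its own. I expect this is exactly what the frozen-bond framework of Theorem~\ref{thm:mixing-below-frozen-bond} is built to handle, presumably via an inductive freezing procedure that strips off large couplings one scale at a time, and I suspect it is this inductive step — which must accommodate a polynomial-in-$N$ number of "near-threshold" couplings without compounding losses — that is responsible for the specific lower cutoff $\gamma_0$ on $\gamma$.
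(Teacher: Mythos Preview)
Your approach is essentially the paper's: invoke Theorem~\ref{thm:mixing-below-frozen-bond} with $L=K$, combine with a gap below the threshold in the order statistics of $|J|$ (the paper uses Lemma~\ref{lem:gaps} to get $|J_{(K+1)}|<\frac{a}{2\beta}(N^\gamma-N^{\gamma-\xi})$, matching your $N^\eta$ with $\eta=\gamma-\xi$), and bound the escape time via the local-field rate at frozen vertices (the paper packages this as Lemma~\ref{lem:spin-structure}). Two small corrections: the residual field at $v\in V_{a,\gamma}$ is not $|J_{(l)}|-O(1)$ but $|J_{(l)}|-O(N^{1/(2\alpha)})$ by Lemma~\ref{lem:energy-drop-off-J}(2), which is still harmless since $\frac{1}{2\alpha}<\gamma$; and your guess that Theorem~\ref{thm:mixing-below-frozen-bond} is proved by inductive freezing is off---the paper instead splits the free spins into a ``small-bond'' block ($|J|<N^\rho$) bounded crudely by the total energy and an ``intermediate-bond'' block ($N^\rho\le|J|\le|J_{(L+1)}|$) treated as a near-product chain, glued by a block-dynamics spectral-gap comparison (Lemma~\ref{lem:block-dynamics}), and it is the requirement that the resulting error $DN^{1/2+1/(2\alpha)}$ be beaten by the gap $N^{\gamma-\xi}$ that forces $\gamma>\gamma_0$.
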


    Much of the work of the paper goes into establishing this sharp bound on the mixing time restricted to a single (or collection of) $t_{a,\gamma}$-metastable well(s). Since the escape time of such a well is on the order $e^{ a N^{\gamma}}$, it is crucial that the the mixing time inside the well have the sharp constant in front of $N^{\gamma}$ in its exponent for it to be negligible as compared to $t_{a,\gamma}$. In particular, crude bounds on the mixing time like exponential in the maximum of the Hamiltonian restricted to the smaller bonds are insufficient, because the sum of all bonds smaller some $J_{(\ell)}$ typically will far exceed $J_{(\ell)}$. 
{ Theorem ~\ref{thm:mixing-within-a-well} will follow from a more general mixing time estimate (see Theorem~\ref{thm:mixing-below-frozen-bond}); we provide a short proof of Theorem~\ref{thm:mixing-within-a-well} at the end of Section~\ref{sec:mixing-block-estimate}.}

   In order to get this sharp bound, we leverage the fact that strong bonds in the heavy-tailed spin glass have a sparse structure. In particular, we split the non-$t_{a,\gamma}$-relevant coordinates into those stronger than {$N^{\rho}$ for $\rho \approx \frac{1}{2\alpha} <\gamma_0$} and those smaller than this. Those larger than $N^{\rho}$ evolve essentially independently of one another, so the restriction of the dynamics to those bonds can be treated as a product chain and its mixing time can be bounded by exponential in its largest bond, rather than the sum of all its constituent bonds. On the other hand, the sites below the threshold $N^{\rho}$ can interact in a mean-field manner, but their total Hamiltonian is $o(t_{a,\gamma})$ so their mixing time is also sufficiently small. These two estimates are stitched together by a block dynamics (see e.g.,~\cite{FM} which talked about the block dynamics as an essential tool in the Ising Glauber dynamics literature) analysis that alternates between updating the sparse structure of bonds between $[N^{\rho},N^{\gamma}]$ and those smaller than $N^{\rho}$. A separate coupling argument bounds this block dynamics' mixing time, showing that it too can be controlled. 
    These arguments are implemented in Section~\ref{sec:mixing-within-well}.  
    
    Altogether, these Markov chain analyses reduce things like Theorem~\ref{thm:mixing-within-a-well} to careful estimates on row sums of the coupling matrix, including conditional on that row's corresponding site having a heavy bond so that $i\in V_{a,\gamma}$. They also require estimates on spacings in the point process given by the point process, as this separation ensures the gap between the mixing time in a well and its mean time to exit in the easiest direction. These estimates are presented first in Section~\ref{sec:coupling-matrix-preliminaries}. 

    With steps (1)--(2) above in hand, we construct the coupling between the projection process $\mathsf{S}(X(st_{a,\gamma}))$, and the simplified Markov chain on wells $Y(s)$ in Section~\ref{sec:proof-of-main-results} to prove Theorem~\ref{thm:main1} and Theorem~\ref{thm:main-autocorrelation}. On a timescale $s_N t_{a,\gamma}$ there will be at most one coordinate $L$ in the skeleton that may be flipping, but not fast enough to equilibrate given the coordinates with larger bonds. The heart of the coupling is coupling the jumps of this critical coordinate between $\SK$ and $Y$. This coupling requires showing concentration for the jump time of $\SK_L$ (which is a heterogeneous exponential clock depending on the full realization of $X(t)$), as well as for $Y_L$ to a single exponential clock with deterministic rates given by a certain Gibbs average. This concentration is provided by the fact that the mixing time on lower coordinates is much faster than the jump time of the $L$'th coordinate. 
    
    Finally, Appendix~\ref{appendix:high-temp} establishes that when $\beta<\beta_0$, one gets high-temperature behavior  as claimed in Remark~\ref{rem:temperature-transition}, and the autocorrelation limit $q$ of Theorem~\ref{thm:main-autocorrelation} is zero.

\subsection*{Acknowledgements. } {The authors are grateful to the anonymous referees for helpful comments and suggestions.} 
The authors thank Antonio Auffinger for helpful comments. The research of R.G. is supported in part by National Science Foundation, Division of Mathematical Sciences grant number 2246780.  The research of C.G is supported in part by a Natural Sciences and Engineering Research Council of Canada Post-Graduate Scholarship Doctoral award.

\section{Preliminary estimates on the coupling matrix}\label{sec:coupling-matrix-preliminaries}

In this section we state and prove the estimates on maximal entries and maximal row sums of the heavy-tailed random matrix $\J$, and on gaps in the histogram of its entries at large scales.  

We begin with a notational disclaimer that throughout this paper, statements are understood to hold for all $N$ sufficiently large, and constants like $c,C,D$ may depend on $\alpha,\beta$ but not on $N$.

\subsection{Row sum estimates}
Our first lemma is a refinement of Lemmas 1--2 from~\cite{AS}, showing that the rows with the largest entries don't have large row sums otherwise.

\begin{lemma} \label{lem:energy-drop-off-J} 
    Fix any $1/(2\alpha) < \rho < 1/\alpha$, and let $J= (J_{ij})$ be an i.i.d symmetric matrix sampled from $\p_\J$. We have the following: 
\begin{enumerate}
        \item With high probability, no row has a pair of entries larger than $N^{\rho}$ and $N^{1/2\alpha}$. I.e.,
        \[
\p_{\J} \big(\exists i \ \exists j_1 \neq j_2 : \abs{J_{ij_1}} \geq N^{\rho} , \abs{J_{ij_2}} \geq N^{1/2 \alpha } \big) = o(1)\,.
        \]
        \item With high probability, any row with large entry, has a small row sum when its largest entry is excluded; I.e., there exists $C(\alpha)>0$ such that:
        \[
\p_{\J} \bigg( \exists i \ \max_{1 \leq j \leq N} \abs{J_{ij}} > N^{\rho}, \sum_{1 \leq j \leq N} \abs{J_{ij}} - \max_{1 \leq j \leq N} \abs{J_{ij}} >   C N^{1/(2\alpha)} \bigg) =o(1)\,.
        \]
\item There exists a constant $C_{\alpha}>0$ such that, 
 \[
    \p_\J\Big(\sum_{i,j: \abs{J_{ij}} < N^{\rho}} \abs{J_{ij}} > C_{\alpha} N^{1-\alpha \rho + \rho} \log N\Big) = o(1)\,.
 \]
    \end{enumerate}
\end{lemma}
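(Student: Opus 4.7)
The proof would handle each of the three tail estimates separately, driven by the explicit polynomial tail $\p(|J_{ij}|\geq t) = N^{-1}t^{-\alpha}$ for $t\geq N^{-1/\alpha}$, together with independence of distinct entries of $\J$. Part (1) is the cleanest: for fixed $i$ and $j_1\neq j_2$, independence gives $\p(|J_{ij_1}|\geq N^\rho,\,|J_{ij_2}|\geq N^{1/(2\alpha)}) = N^{-5/2-\alpha\rho}$, and a union bound over the at most $N^3$ ordered triples yields $N^{1/2-\alpha\rho}$, which is $o(1)$ exactly because $\rho > 1/(2\alpha)$.

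For part (2), I would work on the complement of the bad event from part (1): on this event, any row with a heavy entry (i.e., some $|J_{ij}| > N^\rho$) has all other entries in that row bounded by $N^{1/(2\alpha)}$, so the row sum minus its maximum reduces to a truncated sum $S_i^{(j)} := \sum_{j'\neq j} |J_{ij'}|\mathbf{1}\{|J_{ij'}|\leq N^{1/(2\alpha)}\}$, where $j$ indexes the heavy entry. A direct calculation with the $\nu_{\alpha,N}$ tail gives $\E[S_i^{(j)}] = \Theta(N^{1/(2\alpha)-1/2})$ and $\text{Var}(S_i^{(j)}) = \Theta(N^{1/\alpha-1/2})$, so Chebyshev yields $\p(S_i^{(j)} > CN^{1/(2\alpha)}) = O(C^{-2}N^{-1/2})$ once $C$ exceeds the first-moment constant. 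Crucially, $J_{ij}$ and $S_i^{(j)}$ are independent (the latter omits index $j$), so the joint event $\{|J_{ij}|>N^\rho\}\cap\{S_i^{(j)} > CN^{1/(2\alpha)}\}$ has probability $N^{-1-\alpha\rho} \cdot O(N^{-1/2})$. Union-bounding over all $(i,j)$ then gives $O(N^{1/2-\alpha\rho})=o(1)$.

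Part (3) reduces to a first-moment bound: the computation $\int_{N^{-1/\alpha}}^{N^\rho} x \cdot x^{-1-\alpha}\,dx = \Theta(N^{\rho(1-\alpha)})$ (using $\alpha<1$ so the integral is dominated by the upper endpoint) gives $\E\big[\sum_{|J_{ij}|<N^\rho}|J_{ij}|\big] = \Theta(N^2) \cdot \frac{\alpha}{N(1-\alpha)}N^{\rho(1-\alpha)} = \Theta(N^{1+\rho-\alpha\rho})$, and Markov's inequality with the extra $\log N$ margin in the threshold yields probability $O(1/\log N)=o(1)$. The main technical subtlety is in part (2): a naive Chebyshev bound on the full row sum combined with a union bound over all $N$ rows falls short by a factor of $\sqrt{N}$, and the argument only closes by exploiting the small probability $N^{-\alpha\rho}$ of a given row being heavy together with the independence of the heavy-entry indicator from the truncated sum of the remainder of that row; this is also precisely where the hypothesis $\rho > 1/(2\alpha)$ is needed.
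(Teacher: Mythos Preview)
Your proof is correct, and for parts (2) and (3) it takes a genuinely more elementary route than the paper.

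Part (1) is identical to the paper's argument.

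For part (2), the paper dyadically decomposes the range $[N^{-1/\alpha}, N^{1/(2\alpha)}]$ into finitely many level sets, applies Chernoff bounds to the counts $M_{i,k}=|\{j:|J_{ij}|>b_N^{\eta k}N^{-1/\alpha}\}|$ in each, and thereby obtains a uniform bound on the truncated row sum $\sum_{j:|J_{ij}|<N^{1/(2\alpha)}}|J_{ij}|$ valid simultaneously for \emph{every} row; part (1) then disposes of the contribution above $N^{1/(2\alpha)}$ for heavy rows. Your Chebyshev argument skips the level-set machinery entirely by exploiting the independence of $J_{ij}$ from the remaining truncated sum $S_i^{(j)}$, so that the small probability $N^{-1-\alpha\rho}$ of a heavy entry multiplies the $O(N^{-1/2})$ Chebyshev bound before the $N^2$-fold union bound. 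This is shorter and uses only second moments, at the cost of working only for heavy rows---which is all the statement asks.

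For part (3), the paper again dyadically decomposes and Chernoff-bounds each level count, obtaining an exponentially small failure probability $e^{-cN^{1-\alpha\rho}}$. Your first-moment-plus-Markov argument is a one-liner giving only $O(1/\log N)$, but this suffices since the lemma only claims $o(1)$, and the quantitative rate is not used elsewhere in the paper.
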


\begin{proof} First note by definition of $\nu_{\alpha,N}$ from~\eqref{eq:coupling-distribution} we have that: 
\begin{align} \label{eq:tails-of-J} 
\p_\J( \abs{J_{ij} }> rN^{-\frac{1}{\alpha}} ) = r^{-\alpha}  \, ,    
\end{align}
for any $r \geq 1$. 
For (1) applying \eqref{eq:tails-of-J} and a union bound over all rows, one has,
\[
\p_{\J} (\exists i , \ \exists j_1 \neq j_2 : \abs{J_{ij_1}} \geq N^{\rho}, \abs{J_{ij_2}} \geq N^{\frac{1}{2\alpha}}) \leq \frac{N^3}{N^{3/2}N^{1+\alpha\rho }} \, , 
\] 
and since $\rho > \frac{1}{2\alpha}$, the bound above tends to $0$ as $N \to \infty$.

For proving (2) we proceed as follows:
 write $b_N= N^{2/\alpha}$, and let $T$ be an integer so that $\eta = (2T+1)^{-1} < \alpha/8$. For a fixed row $i$ consider the random variables $M_{i,k}$ given by:
\[
M_{i,k} = \big| \{ j: \abs{J_{ij}} > b_N^{\eta k} N^{-1/\alpha} \} \big|\,.
\]
By the tail probabilities ~\eqref{eq:tails-of-J}, $\E_{\J} [M_{i,k}] = (N-1) b_N^{-\alpha \eta k}$.  By a Chernoff bound, we have for $0 \leq k \leq { T}$,
    \begin{align*}
 \p_{\J}( M_{i,k} &\geq 2\E [M_{i,k}] ) \leq \exp(-N^{\theta})\,,
    \end{align*}
for $\theta = 1/(4T+2)$. Thus with $\p_{\J}$ probability $1-o(1)$ every row of $\J$ satisfies: 
\begin{align*}
\sum_{j: \abs{J_{ij}}  < b_N^{(T+1)/(2T+1)} N^{-\frac{1}{\alpha}}} \abs{J_{ij}}  \leq \sum_{k=0}^{T} M_{i,k} \frac{b_N^{(k+1) \eta}}{N^{\frac{1}{\alpha}}} 
&\leq 2N^{1-\frac{1}{\alpha}} \sum_{k=0}^T b_N^{(k+1)\eta} b_N^{-\alpha k \eta} \\
&\leq { 2(T+1)} N^{-\frac{1}{\alpha}} 
b_N^{1/2+\alpha/4}  \\
&\le { 2(T+1) } N^{-\frac{1}{\alpha}} b_N^{3/4} \\ 
&\le { 2(T+1)} N^{\frac{1}{2\alpha} }\,,
\end{align*}
where the inequality in the third line goes through the bound $$\frac{1}{N^{1/\alpha}} (2N {(T+1)}) b_N^{(1-\alpha) \eta T +\eta} \, ,$$ uses the bound $\eta T<1/2$, and then uses the term $b_N^{-\alpha/2}$ to kill the $N$ prefactor.

For the contribution from larger terms, we {  use a Chernoff bound to obtain}  
\begin{align*}
\p_{\J} \bigg( \exists i: \sum_{b_N^{\frac{T+1}{2T+1}} \leq N^{\frac{1}{\alpha}}\abs{J_{ij}} \leq b_N^{\frac{3}{4}} } \!\!\! \abs{J_{ij}} > C b_N^{3/4} N^{-\frac{1}{\alpha}}  \bigg)
&\leq N \p_{\J} \Big( \Big|\Big\{j : \abs{J_{ij}} > b_N^{\frac{T+1}{2T+1}} N^{-\frac{1}{\alpha}} \Big\}\Big| \geq C  \Big) \\
&\leq N^{1+C} (b_N)^{-\alpha C \frac{T+1}{2T+1} }
\\
&= N^{1+ \frac{1}{2} C -  \frac{2T+2}{2T+1} C } \,,
 \end{align*}
since $(T+1)/(2T+1) > 1/2$ the exponent above is negative if $C$ is sufficiently large. 

Since $b_N^{3/4} N^{-1/\alpha} = N^{1/(2\alpha)}$, if $C$ is chosen sufficiently large we have with $\p_\J$ probability $1-o(1)$ that every row of $\J$ satisfies,
\[
\sum_{j : \ \abs{J_{ij}} < N^{\frac{1}{2\alpha} }} \abs{J_{ij}} < CN^{\frac{1}{2\alpha} } \, .
\]
By part (1), every row contains at most one entry larger than $N^{\frac{1}{2\alpha}}$ with $\p_\J$ probability $1-o(1)$, so item (2) follows.

For part (3) we proceed as follows: { for $l\geq 0$ }define $N_l= |\{(i,j): {\abs{J_{ij}}} \in [2^{-l-1} N^{\rho}, 2^{-l} N^{\rho})\}|$. By~\eqref{eq:tails-of-J}, $\E[N_l] = {\Theta( N^2 2^{\alpha l} N^{-\alpha \rho -1})}$, and so by a Chernoff bound one has, for an absolute constant $c>0$, 
\[
\p_{\J}( N_l > \E [N_l ]\log N ) \leq \exp\Big( -c 2^{\alpha l} N^{1-\alpha \rho} \log N ) \Big)\,.
\]
Summing this over {$l \ge 0$} and using $\alpha<1$, with probability $1-o(1)$ (in fact $1- e^{ - c N^{1-\alpha \rho}}$) we have,
\[
\sum_{\abs{J_{ij}}<N^{\rho} } \abs{J_{ij}} \le \sum_{l=0}^{\infty}  2^{\alpha l} \frac{N^{\rho}}{2^l} \frac{N}{N^{\alpha \rho}} \log N  \le  C_{\alpha} N^{1+\rho - \alpha \rho} \log N\,,
\]
which gives the desired result.
\end{proof}

\subsection{Gaps in the point process of large bonds}

The next two lemmas of this section describe gaps in the point process of $\J$ (in absolute value) when working at certain scales. We want to ensure that at large values of $|J_{ij}|$, there are large gaps between consecutive bonds in the order statistics, and that perturbations of some large $|J_{ij}|$ by the absolute sum of other entries in its row does not destroy the gap. 
{Henceforth, let \begin{align}\label{eq:xi}\xi(\alpha):= \frac{1-\alpha}{1+2\alpha}\end{align} so that for any $\gamma$ in the range $\gamma_0 < \gamma < \frac{1}{\alpha}$ one has: 
     $2\alpha \gamma + \xi >2$ and $\gamma -\xi > \frac{1}{2} + \frac{1}{2\alpha }$.}
%
For any {$\gamma > \gamma_0$} 
we can let $\rho:= \rho(\gamma,\xi)$ be any number such that  
\begin{align} \label{eq:rho-def}
\rho>\frac{1}{2\alpha}\,, \qquad \text{and} \qquad 1-\alpha \rho + \rho < \gamma -\xi \, ;
\end{align}
such a $\rho$ exists as $\gamma-\xi > \frac{1}{2} + \frac{1}{2\alpha}$, which is the value of $1-\alpha \rho + \rho$ at $\rho = \frac{1}{2\alpha}$.

\begin{lemma} \label{lem:gaps} Let $\xi$ be as in~\eqref{eq:xi}, let $\gamma > \frac{1}{1+\alpha} + \frac{1}{2\alpha}$, $\rho$ be as in \eqref{eq:rho-def}, and set $\vartheta_N = N^{-\xi}$. 
\begin{enumerate}
 \item For any $a>0$, the number of pairs $(i,j)$ such that $\abs{J_{ij}} \in [\frac{a(1-\vartheta_N)}{2\beta} N^{\gamma}, \frac{a(1+\vartheta_N)}{2\beta} N^{\gamma} ]$ is zero with probability $1-o(1)$. I.e.,
 \[
\p_{\J} \Big( \exists (i,j) : \abs{J_{ij} } \in \big[ \tfrac{a(1-\vartheta_N)}{2\beta} N^{\gamma}, \tfrac{a(1+\vartheta_N)}{2\beta} N^{\gamma} \big] \Big) = o(1)\,. 
 \]
\item  The sum over all non-maximal entries in rows containing an element $|J_{ij}|\ge N^{\rho}$, is bounded by $DN^{1-\alpha \rho + 1/2\alpha}$ for some $D$ independent of $N$. I.e., if $A= \{i: \max_j |J_{ij}| \ge N^{\rho}\}$,
    \[
\p\Big( \sum_{i \in A} \Big(\sum_{j=1}^{N} \abs{J_{ij}} - \max_{1 \leq j \leq N} \abs{J_{ij}} \Big)> D N^{1-\alpha \rho + 1/2\alpha} \Big) =o(1)\,. 
    \]
\end{enumerate}    
\end{lemma}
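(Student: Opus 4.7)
For part (1), the plan is a first-moment/union bound argument based on the tail formula \eqref{eq:tails-of-J}. For a single pair $(i,j)$, direct computation gives
\[
\p_\J\Big(|J_{ij}| \in \big[\tfrac{a(1-\vartheta_N)}{2\beta}N^\gamma,\ \tfrac{a(1+\vartheta_N)}{2\beta}N^\gamma\big]\Big) = \Big(\tfrac{2\beta}{a}\Big)^\alpha N^{-1-\alpha\gamma}\big[(1-\vartheta_N)^{-\alpha} - (1+\vartheta_N)^{-\alpha}\big].
\]
A Taylor expansion of $(1\pm t)^{-\alpha}$ around $t=0$ shows the bracketed difference equals $2\alpha\vartheta_N + O(\vartheta_N^3) = O(N^{-\xi})$. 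Summing over the $\binom{N}{2}$ pairs yields an expected count of bonds falling in the window of order $N^{1-\alpha\gamma-\xi}$. A short algebraic calculation (comparing rational functions in $\alpha$) shows that the hypothesis $\gamma > \frac{1}{1+\alpha}+\frac{1}{2\alpha}$ combined with $\xi = \frac{1-\alpha}{1+2\alpha}$ implies $\alpha\gamma + \xi > 1$, so the expected count is $o(1)$ and Markov's inequality closes the argument.

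For part (2), the plan is to combine Lemma~\ref{lem:energy-drop-off-J}(2) with a tail count on $|A|$. On the high-probability event from Lemma~\ref{lem:energy-drop-off-J}(2), every row $i \in A$ satisfies $\sum_j |J_{ij}| - \max_j |J_{ij}| \le C N^{1/(2\alpha)}$, so the quantity of interest is at most $C|A|\,N^{1/(2\alpha)}$. It remains to show $|A| \le D' N^{1-\alpha\rho}$ with probability $1-o(1)$. Since each $i\in A$ is an endpoint of at least one ``strong'' pair $(i,j)$ with $|J_{ij}|\ge N^\rho$, we have $|A| \le 2\,|\{(i,j): |J_{ij}|\ge N^\rho\}|$; and by \eqref{eq:tails-of-J} the number of such pairs has expectation $\tfrac{1}{2}N^{1-\alpha\rho}$, which concentrates by either Markov's inequality or a Chernoff bound for binomial counts. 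Multiplying the two bounds yields the claimed $D N^{1-\alpha\rho + 1/(2\alpha)}$.

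Neither part presents a serious obstacle beyond correctly tracking the exponents; both are union-bound arguments layered on top of Lemma~\ref{lem:energy-drop-off-J} and the tail formula \eqref{eq:tails-of-J}. The one place that requires genuine care is the Taylor expansion in part (1), since missing the factor $\vartheta_N = N^{-\xi}$ would leave the expected count of order $N^{1-\alpha\gamma}$, which is not uniformly small throughout the stated range of $\gamma$. The verification of $\alpha\gamma + \xi > 1$ is the sole nontrivial algebraic step, and it is precisely the reason the specific threshold $\xi = (1-\alpha)/(1+2\alpha)$ appears in \eqref{eq:xi}.
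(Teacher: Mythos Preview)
Your proposal is correct and follows essentially the same approach as the paper's proof: a first-moment bound plus Markov's inequality for part~(1), and combining Lemma~\ref{lem:energy-drop-off-J}(2) with a binomial tail bound on the number of strong entries for part~(2). If anything, your write-up is slightly more explicit in verifying the key algebraic inequality $\alpha\gamma+\xi>1$ from the stated hypothesis $\gamma > \tfrac{1}{1+\alpha}+\tfrac{1}{2\alpha}$, whereas the paper simply asserts $\gamma>(1-\xi)/\alpha$.
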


\begin{proof}
In order to prove part (1), note that the expected number of {$(i,j)$ pairs such that $\abs{J_{ij}} \in [\frac{a(1-\vartheta_N)}{2\beta} N^{\gamma}, \frac{a(1+\vartheta_N)}{2\beta} N^{\gamma}]$} is 
\[
 (2\beta)^{\alpha} \frac{N-1}{2a^{\alpha}N^{\alpha \gamma} } ( (1-\vartheta_N)^{-\alpha}-(1+\vartheta_N)^{-\alpha} ) = 
 O(N^{1-\xi - \alpha\gamma})\,.
\]
Since $\gamma> (1-\xi)/\alpha$, the above is $o(1)$. An application of Markov's inequality finishes the proof. 

To prove (2) we proceed as follows.
The number of entries of $\J$ whose absolute value is at least $N^\rho$ is distributed as a binomial random variable with parameters $\binom{N}{2}$ and $N^{-1-\alpha \rho}$. Consequently, by standard tail bounds on binomial random variables, we have with probability $1-o(1)$, the number of such $ij$ is at most $100 N^{1-\alpha \rho}$. By item (2) of Lemma~\ref{lem:energy-drop-off-J}, also with probability $1-o(1)$, for any row containing an entry larger than $N^\rho$, the sum of the non-maximum elements is at most $C N^{1/(2\alpha)}$. On the intersection of these events, an upper bound on the sum of interest is given by
\[
\sum_{i \in A} \Big( \sum_{j=1}^{N} \abs{J_{ij}} - \max_{1 \leq j \leq N } |J_{ij}| \Big) < 100C N^{1-\alpha \rho + 1/(2\alpha) }\,,
\]
which gives the desired result.    
\end{proof}



{ 
\begin{lemma}\label{lem:spacing-of-J}
Suppose that $\gamma_0 < \gamma < \frac{1}{\alpha}$. For $l \geq 0$, let $I_l$ be given by: 
 \begin{align*}
        I_l &= [N^{\gamma} + l N^{\gamma - \xi}, N^{\gamma} + (l+1) N^{\gamma - \xi}]\,, \qquad \text{and} \qquad M_l = | \{ i<j : \abs{J_{ij}} \in I_l \}| \,.
    \end{align*}
    Then with probability $1-o(1)$, there is no $l$ with $M_l + M_{l+1} \ge 2$. 
\end{lemma}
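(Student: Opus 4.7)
The plan is to prove this by a direct union bound over $l$, where for each $l$ the event $\{M_l + M_{l+1} \geq 2\}$ is controlled by a factorial-moment (second-moment) estimate using the independence of the entries of $\mathbf{J}$. The key input will be the tail formula~\eqref{eq:tails-of-J}, and the key exponent arithmetic is $2\alpha \gamma + \xi > 2$ for $\gamma > \gamma_0$, which is exactly the inequality recorded in the paragraph defining $\xi$.

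First I would estimate $q_l := \mathbb{P}_{\mathbf{J}}(|J_{ij}| \in I_l \cup I_{l+1})$. Writing $x_l = N^{\gamma}(1 + l N^{-\xi})$, the tail formula gives $q_l = N^{-1}\bigl(x_l^{-\alpha} - x_{l+2}^{-\alpha}\bigr)$, and a Taylor expansion of $t \mapsto t^{-\alpha}$ yields
\[
q_l \leq C_\alpha\, N^{-1-\alpha\gamma-\xi}\,(1 + l N^{-\xi})^{-\alpha - 1}
\]
for some $C_\alpha$ and all $l \geq 0$ (with $N$ large). Next, since the $J_e$ are i.i.d., applying Markov's inequality to the factorial moment of $M := M_l + M_{l+1}$ gives
\[
\mathbb{P}_{\mathbf{J}}(M_l + M_{l+1} \geq 2) \;\leq\; \tfrac{1}{2}\,\mathbb{E}\bigl[M(M-1)\bigr] \;\leq\; \binom{\binom{N}{2}}{2} q_l^{\,2} \;\leq\; \tfrac{1}{4} N^4\, q_l^{\,2}.
\]

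Then I would sum over $l$ using the Riemann sum approximation
\[
\sum_{l \geq 0} (1 + l N^{-\xi})^{-2\alpha - 2} \;\leq\; C\, N^{\xi} \int_0^\infty (1+u)^{-2\alpha - 2}\,du \;=\; O(N^{\xi}),
\]
valid because $2\alpha + 2 > 1$ makes the integral finite. Combining with the two previous bounds,
\[
\sum_{l \geq 0} \mathbb{P}_{\mathbf{J}}(M_l + M_{l+1} \geq 2) \;\leq\; C\, N^{4}\, N^{-2 - 2\alpha\gamma - 2\xi}\, N^{\xi} \;=\; C\, N^{2 - 2\alpha\gamma - \xi}.
\]
Finally, from $\gamma > \gamma_0$ and the identity $2\alpha\gamma_0 + \xi = 2$ (already recorded in the paper just after~\eqref{eq:xi}), we have $2 - 2\alpha\gamma - \xi < 0$, so the union bound is $o(1)$.

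I do not anticipate a genuine obstacle here: everything reduces to a Taylor expansion of $t^{-\alpha}$ and a single Markov/second-moment step. The only thing that requires any care is the bookkeeping of exponents, especially tracking the factor $N^{-\xi}$ that comes from the narrower interval width relative to the natural scale $N^{\gamma}$; this is precisely the reason the calibration $\xi = (1-\alpha)/(1+2\alpha)$ and the threshold $\gamma_0$ were chosen as they were, so that the prefactor $N^{2}$ and the loss $N^{-2\alpha\gamma-\xi}$ from the second moment just barely balance.
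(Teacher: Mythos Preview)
Your proof is correct and follows essentially the same approach as the paper: a union bound over $l$ combined with a second-moment (pair-counting) estimate on $M_l+M_{l+1}$, reducing everything to the exponent condition $2\alpha\gamma+\xi>2$. The only cosmetic difference is that the paper bounds $\mathbb{P}(|J_{12}|\in I_l)$ directly from the density and then splits the sum at $\ell^*=N^\xi$ into two ranges, whereas you carry the factor $(1+lN^{-\xi})^{-\alpha-1}$ uniformly and evaluate a single Riemann sum; both routes land on the same $N^{2-2\alpha\gamma-\xi}$ bound.
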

\begin{proof}
 Since $d\nu_{\alpha,N}$ is decreasing on $\mathbb R_+$, for any $l \geq 0$ one has, 
\begin{align*}
     \p_\J ( \abs{J_{12}} \in I_l) &\le N^{\gamma -\xi  } \frac{d\nu_{\alpha,N}}{dx}(N^\gamma + lN^{\gamma -\xi }) 
     \\
     &= \alpha \frac{ N^{\gamma-\xi} }{N(N^{\gamma}+lN^{ \gamma-\xi  })^{1+\alpha}} 
     \\
     &\leq \frac{\alpha N^{\gamma-\xi}}{N \max( N^{(1+\alpha)\gamma}, (l N^{\gamma-\xi  })^{1+\alpha} )  ) }\,.
\end{align*}
    Consequently by union bounding over all pairs, and all intervals $I_l$ for $l \geq 0$ we have: 
     \begin{align*}
        \p_{\mathbf{J}} ( \exists l  \geq 0, & \, M_l + M_{l+1} \geq 2 ) \\
        &\leq 2 \binom{N}{2}^2 \sum_{l=0}^{\infty} \Big(\p_{\mathbf{J}} ( \abs{J_{12}} \in I_l, \abs{J_{13}} \in I_l) + \p_{\mathbf{J}} ( \abs{J_{12}} \in I_l, \abs{J_{13}} \in I_{l+1} )\Big) \\ 
        &\le  4 \binom{N}{2}^2 \sum_{l=0}^{\infty} \p_{\mathbf{J}} ( \abs{J_{12}} \in I_l)^2 \, .
    \end{align*}
    Let $\ell^{\ast} = N^{\xi}$, then breaking the sum up for $l < \ell^{\ast}$ and $l \geq \ell^{\ast}$ we have:
    \begin{align*}
        4 \binom{N}{2}^2 \sum_{ l< \ell^{\ast} } \p_{\mathbf{J}} (\abs{J_{12}} \in I_l)^2 &\leq   \frac{4 \alpha^2   N^{4 + 2( \gamma-\xi) +\xi } }{N^{2+2(1+\alpha)\gamma}  }
        \\
        &= \frac{4\alpha^2 }{N^{ 2\alpha \gamma +\xi - 2 }} \, , 
    \end{align*}
which tends to zero {by the facts that $\gamma>\gamma_0$ and the definition of $\xi$--see the comment immediately following~\eqref{eq:xi}.}
Similarly for $l \ge \ell^{\ast}$ one has
\begin{align*}
     4 \binom{N}{2}^2 \sum_{ l \ge \ell^{\ast} } \p_{\mathbf{J}} (\abs{J_{12}} \in I_l)^2 &\leq \sum_{l \ge \ell^{\ast} }  \frac{4 \alpha^2  N^{4 + 2(\gamma-\xi)  } }{ N^{2} N^{2(1+\alpha)( \gamma-\xi ) } l^{2(1+\alpha)}    }
     \\
     &=  \frac{4\alpha^2 N^{2 + 2(\gamma-\xi ) } }{  N^{2(1+\alpha) \gamma}}  \sum_{l \ge \ell^{\ast} }  \frac{1}{ (l/\ell^{\ast})^{2(1+\alpha)}} \, .
\end{align*}
The sum in the last line above is bounded as:
\[
\sum_{l \geq l^{\ast} } \frac{1}{ (l/\ell^{\ast} )^{2(1+\alpha)} } \leq \ell^{\ast} \sum_{r \geq 1} \frac{1}{r^{2(1+\alpha)} } \, .
\]
Plugging in the definition of $\ell^{\ast}$ we then have: 
\[
 4 \binom{N}{2}^2 \sum_{ l \ge \ell^{\ast} } \p_{\mathbf{J}} (\abs{J_{12}} \in I_l){^2}
 \leq 
  \frac{4 \alpha^2  }{N^{2\alpha\gamma +\xi -2  } } \sum_{r \geq 1} \frac{1}{r^{2(1+\alpha)}} \, ,
\]
and again the conditions on $\gamma,\xi$ imply this quantity is $o(1)$. Combining both estimates completes the proof.     
\end{proof}
}

We conclude this section with a (known) straightforward consequence of Lemma~\ref{lem:energy-drop-off-J} that was shown in \cite[Theorem 2.10]{CKS}  and will be used later. 

\begin{theorem}\label{THM:Allignment-of-Bonds} Fix $\beta>0$, $\gamma \in (\frac{1}{2\alpha} + \frac{1}{1+\alpha}, \frac{1}{\alpha})$, $a >0$,  and let $\sat (\sigma) = \{(i,j): \sgn{J_{ij} \sigma_i \sigma_j} =1 \}$. There exists a constant  $c=c(\beta,\alpha,a)>0$, such that with $\p_\J$ probability $1-o(1)$ we have  
    \[
 \pi_{\beta,\J} (  E_{a,\gamma} \subset \sat(\sigma)   )   > 1-e^{-cN^{\gamma}} \,.
    \]
    
\end{theorem}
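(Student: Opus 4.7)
The plan is a standard bijection (peierls-type) argument on the Gibbs measure, leveraging the row-sum estimates of Lemma~\ref{lem:energy-drop-off-J}. Work on the intersection of the $\p_\J$-probability $1-o(1)$ events that (i) the edges $e_1,\dots,e_K$ in $E_{a,\gamma}$ are vertex-disjoint (from Lemma~\ref{lem:energy-drop-off-J}(1), using $\gamma>\tfrac{1}{2\alpha}$), and (ii) for every row $i$ containing an entry of absolute value at least $\tfrac{a}{2\beta}N^\gamma\ge N^\rho$, one has $\sum_{k\neq \arg\max_j|J_{ij}|}|J_{ik}|\le CN^{1/(2\alpha)}$ (from Lemma~\ref{lem:energy-drop-off-J}(2)). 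On this event, for each $l=1,\dots,K$, consider the spin-flip involution $T_l:\sigma\mapsto \sigma^{(l)}$ defined by $\sigma^{(l)}_{v_l}=-\sigma_{v_l}$ and $\sigma^{(l)}_u=\sigma_u$ for $u\ne v_l$. This is a bijection between $A_l:=\{\sigma:e_l\in\sat(\sigma)\}$ and $B_l:=\{\sigma:e_l\notin\sat(\sigma)\}$.

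Next, compute the Hamiltonian change under $T_l$ when $\sigma\in A_l$, which gives
\begin{align*}
H(\sigma^{(l)})-H(\sigma)=-2\sigma_{v_l}\sum_{k\neq v_l}J_{v_l k}\sigma_k=-2|J_{v_l w_l}|-2\sigma_{v_l}\!\!\sum_{k\neq v_l,w_l}\!\!J_{v_l k}\sigma_k\,,
\end{align*}
since the $k=w_l$ term equals $-2|J_{v_l w_l}|$ whenever $e_l$ is satisfied. Using $|J_{v_l w_l}|\ge \tfrac{a}{2\beta}N^\gamma$ together with the row-sum bound on the error term, we obtain a deterministic per-edge Gibbs weight comparison: for every $\sigma\in A_l$,
\begin{align*}
\frac{e^{\beta H(T_l(\sigma))}}{e^{\beta H(\sigma)}}\le \exp\bigl(-2\beta|J_{v_l w_l}|+2\beta CN^{1/(2\alpha)}\bigr)\le \exp\bigl(-aN^\gamma+o(N^\gamma)\bigr)\,,
\end{align*}
where we used $\gamma>\tfrac{1}{2\alpha}$ so that $N^{1/(2\alpha)}=o(N^\gamma)$. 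Summing over $A_l$ and using the bijection gives $\pi_{\beta,\J}(B_l)\le e^{-aN^\gamma+o(N^\gamma)}\pi_{\beta,\J}(A_l)\le e^{-aN^\gamma+o(N^\gamma)}$.

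Finally, a union bound yields
\begin{align*}
\pi_{\beta,\J}\bigl(E_{a,\gamma}\not\subset \sat(\sigma)\bigr)\le \sum_{l=1}^{K}\pi_{\beta,\J}(B_l)\le K\cdot e^{-aN^\gamma+o(N^\gamma)}\,,
\end{align*}
and since $K=O(N^{1-\alpha\gamma})$ grows only polynomially in $N$, this is bounded by $e^{-cN^\gamma}$ for any $c<a$ and $N$ large. The only real obstacle is bounding the error term $\sum_{k\ne w_l}|J_{v_l k}|$ by $o(N^\gamma)$ with high $\p_\J$-probability and uniformly over all $v_l$ in the row set indexed by the big bonds; this is precisely what Lemma~\ref{lem:energy-drop-off-J}(2) provides, and it is the lower bound $\gamma>\tfrac{1}{2\alpha}$ (which is implied by the hypothesis $\gamma>\tfrac{1}{2\alpha}+\tfrac{1}{1+\alpha}$) that makes this error negligible relative to the gain $aN^\gamma$ from the big bond.
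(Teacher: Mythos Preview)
Your proof is correct and, in fact, more elementary than the paper's. The paper argues via the random-cluster (FK) representation: it defines the bad event $A=\{\omega:\exists\,i\le K,\ \omega_{e_i}=0\}$ in the FK space and builds a many-to-one map $\Phi:A\to A^c$ that opens all the edges $e_1,\dots,e_K$ while closing every other edge incident to their endpoints; the bound comes from comparing $\prc(\omega)/\prc(\Phi(\omega))$ and controlling the preimage size $2^{O(NK)}$, which forces the stronger constraint $2-\alpha\gamma<\gamma$ (equivalently $\gamma>\tfrac{2}{1+\alpha}$) implied by the theorem's hypothesis $\gamma>\tfrac{1}{2\alpha}+\tfrac{1}{1+\alpha}$. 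Your argument works directly on the Gibbs measure with a single-spin-flip bijection $T_l$ and the row-sum bound of Lemma~\ref{lem:energy-drop-off-J}(2), needing only $\gamma>\tfrac{1}{2\alpha}$ to make the error $CN^{1/(2\alpha)}$ negligible; it also yields the sharper constant $c<a$ rather than $c=\tfrac12$. The one place vertex-disjointness is genuinely used in your argument is to guarantee that $|J_{v_l w_l}|$ is the maximal entry of row $v_l$, so that Lemma~\ref{lem:energy-drop-off-J}(2) bounds precisely $\sum_{k\ne w_l}|J_{v_l k}|$; you state this but do not make the dependence explicit, so it may be worth a sentence. Otherwise the argument is clean and strictly simpler than the paper's route.
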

For completeness, we provide a short proof of Theorem \ref{THM:Allignment-of-Bonds} in Appendix \ref{AP:Allignment}.

\section{Bounding the mixing time in a well} \label{sec:mixing-within-well}
Towards proving Theorem~\ref{thm:mixing-within-a-well} we start by bounding the mixing time of single site Glauber dynamics restricted to stay in a well $\mathcal C$. In what follows, for a fixed $\gamma \in (\frac{1}{2\alpha} + \frac{1}{1+\alpha}, \frac{1}{\alpha} )$ and $a \geq 1$, we shall write $K= \abs{E_{a,\gamma} }$. Recall that we use $J_{(l)}$ for the $l$'th largest bond, $e_l$ for the associated edge, and $v_l,w_l$ for the ordered pair of endpoints of $e_l$ ({recall that we are working on the high probability event} that for $l\le K$, the edges $e_1,...,e_K$ are vertex disjoint, and thus there is no confusion here).

Throughout this section, we fix a realization of $\J$ satisfying the bounds of Section~\ref{sec:coupling-matrix-preliminaries} (note, this happens with $\p_\J$ probability $1-o(1)$). 
Our goal in this section is the following.

\begin{theorem} \label{thm:mixing-below-frozen-bond} 
    Let ${ 0 \leq L \leq K }$ be a fixed index, fix an assignment $\sigma_{v_i},\sigma_{w_i}$ for $i \leq L$ and let $\mathcal{C}$ be the set of all configurations in $\{ \pm 1 \}^{N}$ taking those values on $\{v_i,w_i \}_{i \leq L}$. 

    Let $\ol{X}(t)$ denote Glauber dynamics restricted to $\mathcal{C}$, i.e., initialized in $\mathcal{C}$ and rejecting all updates to vertices $\{v_i,w_i\}_{i \leq L}$, and let $\tmix^{\mathcal C}$ denote its mixing time (see~\eqref{eq:tmix} for a formal definition). Then there is $D(\beta,\alpha)>0$ so that one has 
    \[
\tmix^{\mathcal C} \leq  \exp(2\beta \abs{J_{(L+1)}} + D N^{\frac{1}{2}+\frac{1}{2\alpha} } )\,.
    \]
\end{theorem}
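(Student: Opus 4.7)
Plan. I will split the unfrozen sites into two scales using a threshold $N^{\rho}$ with $\rho$ just above $1/(2\alpha)$ as in \eqref{eq:rho-def}. Let $L'\ge L$ count the bonds with $|J_{(l)}|\ge N^\rho$ and set $V_{\text{med}}=\{v_l,w_l:L<l\le L'\}$ and $V_{\text{sm}}=[N]\setminus(V_{\text{med}}\cup\{v_i,w_i\}_{i\le L})$. Lemma~\ref{lem:energy-drop-off-J}(1) guarantees $V_{\text{med}}$ is a vertex-disjoint union of pairs. I will analyze a block dynamics on $\mathcal{C}$ that alternates a heat-bath resampling of $V_{\text{sm}}$ (conditional on all other spins) with a heat-bath resampling of $V_{\text{med}}$ (conditional on all other spins), and then transfer the resulting estimate to single-site Glauber via a standard block-to-single-site comparison in the spirit of~\cite{FM}.

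Within a single medium-block step, the conditional law on $V_{\text{med}}$ factors over its disjoint pairs $(v_l,w_l)$, and each pair is a four-state chain driven by the large bond $J_{(l)}$ together with an external field contribution from the complement; by Lemma~\ref{lem:energy-drop-off-J}(2) this field is at most $CN^{1/(2\alpha)}$ in magnitude. The product chain over $L<l\le L'$ therefore mixes in time $\exp(2\beta|J_{(L+1)}|+O(N^{1/(2\alpha)}))$. Within a single small-block step, the Hamiltonian oscillation on $V_{\text{sm}}$ given any fixing of the rest is at most $2\sum_{|J_{ij}|<N^\rho}|J_{ij}|$ plus boundary terms from bonds between $V_{\text{sm}}$ and $V_{\text{med}}\cup\{v_i,w_i\}_{i\le L}$; Lemma~\ref{lem:energy-drop-off-J}(3) bounds the first sum and Lemma~\ref{lem:gaps}(2) bounds the boundary, giving overall oscillation $O(N^{1/2+1/(2\alpha)})$ for $\rho$ chosen as above (with logarithmic factors absorbed by slightly perturbing $\rho$). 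Standard heat-bath mixing bounds (via canonical paths) then yield internal small-block mixing in time $\exp(O(N^{1/2+1/(2\alpha)}))$.

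To assemble the full block dynamics $\widetilde{X}$, I couple two copies sequentially: after one small-block step they lie within total variation $e^{-N^\eta}$ on $V_{\text{sm}}$ (by iterating the small-block mixing estimate past its mixing time); once $V_{\text{sm}}$ agrees the subsequent medium-block step uses identical conditional laws on $V_{\text{med}}$ and couples exactly there. A single round has duration $\exp(2\beta|J_{(L+1)}|+DN^{1/2+1/(2\alpha)})$ and couples with probability $1-o(1)$, giving the same bound on $\tmix(\widetilde X)$. Finally a block-to-single-site comparison (decomposing each block move into a sequence of single-site moves with controlled congestion) transfers the bound to $\tmix^{\mathcal C}$, with only a polynomial-in-$N$ overhead absorbed into $D$. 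The main obstacle I anticipate lies precisely in this nested coupling: the internal small-block mixing must be driven to distance $e^{-N^\eta}$ before firing a medium-block update, so that the two scales genuinely separate in time. This is possible only because $\rho>1/(2\alpha)$ is chosen strictly and the hypothesis $\gamma>\gamma_0$ places the medium-block barrier $|J_{(L+1)}|\sim N^\gamma$ strictly above the small-block Hamiltonian scale $N^{1/2+1/(2\alpha)}$.
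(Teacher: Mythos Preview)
Your overall strategy is the paper's: decompose the unfrozen sites into $V_{\text{sm}}=V_1$ and $V_{\text{med}}=V_2$, bound the two within-block gaps (Propositions~\ref{Prop:Spectral-gap-on-complement} and~\ref{Prop:Largeish-bonds-spectral-gap}), bound the block-dynamics gap by a coupling, and combine via Lemma~\ref{lem:block-dynamics}. Two corrections are needed. The minor one: the conditional law on $V_{\text{med}}$ given the complement does \emph{not} factor over the pairs $(v_l,w_l)$, because there are weak cross-pair bonds $J_{v_lv_{l'}}$, etc. The paper handles this by a Dirichlet-form comparison between the true chain on $V_2$ and the product chain, with discrepancy $F(\sigma)=\sum_{i,j\in V_2:|J_{ij}|\notin I_{V_2}}J_{ij}\sigma_i\sigma_j$ controlled by Lemma~\ref{lem:gaps}(2).

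The real gap is your block-dynamics coupling. After a small-block step the two copies still carry \emph{different} boundary data on $V_{\text{med}}$, so they sample from \emph{different} conditionals $\pi(\cdot\mid\zeta)$ and $\pi(\cdot\mid\zeta')$ on $V_{\text{sm}}$; running Glauber past its mixing time drives each copy close to \emph{its own} conditional, not to the other copy. These two conditionals differ by a Radon--Nikodym factor of order $\exp\bigl(\pm 2\beta\sum_{i\in V_{\text{sm}},\,j\in V_{\text{med}}}|J_{ij}|\bigr)$, so by Lemma~\ref{lem:gaps}(2) the optimal coupling succeeds with probability only $e^{-O(N^{1-\alpha\rho+1/(2\alpha)})}$, not $1-o(1)$. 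The paper reverses the order: it first couples $V_{\text{med}}$ pair by pair (each pair succeeding with probability at least $\tfrac18 e^{-4\beta L_{v_lw_l}}$ uniformly over the two $V_{\text{sm}}$ boundaries), and only then couples $V_{\text{sm}}$ exactly once $V_{\text{med}}$ agrees. Either order yields a block-dynamics mixing time of $\exp(O(N^{1/2+1/(2\alpha)}))$, not $O(1)$ rounds; the theorem is still recoverable because this extra factor is of the size already allowed in $D$, but your argument as written does not establish it.
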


(Notice that the mixing time upper bound in Theorem~\ref{thm:mixing-within-a-well} is a special case of Theorem~\ref{thm:mixing-below-frozen-bond}, when you combine this with item (1) of Lemma \ref{lem:gaps}; the bound on the escape time essentially follows also from Lemma~\ref{lem:gaps}, and is formally  a consequence of Lemma~\ref{lem:spin-structure}.)

\subsection{Mixing time and spectral gap tools}\label{subsec:mixing-time-prelims}
We quickly recall formal definitions of the mixing time, inverse spectral gap, and their relationship, and introduce a few tools from Markov chain mixing time analysis to which we will appeal. See~\cite{LP} for more background on Markov chain mixing times. Consider a continuous-time Markov chain with transition rates $P 
= {(p_{\sigma,\sigma'})_{\sigma\ne \sigma'}}$ {with diagonal such that all row-sums are $N$}, reversible with respect to a stationary distribution $\pi$. The mixing time of this chain is defined as 
\begin{align}\label{eq:tmix}
    \tmix(\varepsilon)= \inf\{t>0: \max_{x_0} d_\tv ( \mathbb P_{x_0}(X_t \in \cdot) , \pi) <\varepsilon\}\,,
\end{align}
where we are using the notation $\mathbb P_{x_0}(X_t\in \cdot)$ as the law of the Markov chain initialized from $x_0$.

It is standard to define $\tmix = \tmix(1/4)$ because a boosting argument using sub-multiplicativity of the total-variation distnace to stationarity shows that for $\varepsilon <1/4$, one has 
\begin{align}\label{eq:tv-distance-submultiplicativity}
\tmix(\varepsilon) \le \tmix(1/4) \log_2 (1/\varepsilon)\,.
\end{align}
A closely related quantity is the inverse spectral gap. The spectral gap, which we denote by $\gap$, is defined as the smallest non-zero eigenvalue of the operator ${L}$ where $L =  NI - P$. Equivalently, it has a variational characterization 
\begin{align}\label{eq:gap-variational-form}
    \gap = \inf_{f: \text{Var}_{\pi}(f) \ne 0} \frac{\mathcal E(f,f)}{\text{Var}_\pi(f)}\,, \qquad \text{where} \qquad \mathcal E(f,f) = \sum_{\sigma,  \sigma'} \pi(\sigma) p_{\sigma,\sigma'}(f(\sigma')  - f(\sigma))^2\,,
\end{align}
is the Dirichlet form. We then have the following relation between the mixing time and the inverse spectral gap: 
\begin{align}\label{eq:tmix-trel-comparison}
	(\gap^{-1}-1) \log\big(\tfrac{1}{2\epsilon}\big) \le \tmix(\epsilon) \le \gap^{-1} \log\big(\tfrac{1}{\epsilon \pi_{\min}}\big)\,,
\end{align}
where $\pi_{\min} = \min_{x} \pi(x)$. In particular, up to polynomial factors in $N$, it suffices to upper bound $\gap^{-1}$ to upper bound $\tmix$. 

A key tool for proving mixing time upper bounds, that we will rely on for proving Theorem~\ref{thm:mixing-below-frozen-bond} is what is known as \emph{block dynamics}. 

\begin{definition}
    Suppose $B_1,..., B_k$ {is a (not necessarily disjoint) cover of} the underlying vertex set $V$. The block dynamics $\mathcal B$ on $V$ is the continuous-time Markov process that assigns the blocks $B_i$ independent Poisson clocks of rate $1$, and when the clock of block $B_i$ rings, the configuration on $B_i$ is resampled conditional on the configuration on $V\setminus B_i$. 
\end{definition}

The following bound compares the inverse spectral gap of the Glauber dynamics to that of a block dynamics Markov chain, times the worst possible inverse gap of the individual blocks. 

\begin{lemma}[{\cite[Proposition 3.4]{FM}}]\label{lem:block-dynamics}
    Suppose $\mathcal B$ is a block dynamics with blocks $B_1,...,B_k$. Let $\mathsf{gap}_{B_i^\zeta}$ be the inverse gap of Glauber dynamics on $B_i$ with boundary conditions $\zeta$, and let $\mathsf{gap}_{\mathcal B}$ be the inverse gap of the block dynamics. Then, 
    \begin{align*}
        \gap_{V}^{-1} \le \chi_{\mathcal B}\, \gap_{\mathcal B}^{-1} \max_{i} \max_{\zeta} \gap_{B_i^\zeta}^{-1}\,,
    \end{align*}
    where $\chi_{\mathcal B} = \max_v |\{i: v\in B_i\}|$. 
\end{lemma}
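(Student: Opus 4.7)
The plan is to prove the block dynamics comparison via the variational characterization of the spectral gap from~\eqref{eq:gap-variational-form}: it suffices to show that for every test function $f$ on $\{\pm 1\}^V$ with $\text{Var}_\pi(f) \ne 0$,
\begin{align*}
 \text{Var}_\pi(f) \;\le\; \chi_{\mathcal B}\, \gap_{\mathcal B}^{-1}\, \bigl(\max_i \max_\zeta \gap_{B_i^\zeta}^{-1}\bigr) \, \mathcal E_V(f,f)\,,
\end{align*}
where $\mathcal E_V$ is the Dirichlet form of the full single-site Glauber dynamics. I will chain together two Poincar\'e inequalities and then perform an exchange of summation to pay a factor of $\chi_{\mathcal B}$ for the overlap of blocks.

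First, I would apply the Poincar\'e inequality for the block dynamics. By definition of $\mathcal B$, the block chain resamples the configuration on $B_i$ from the conditional distribution given $\sigma_{V\setminus B_i}$, so its Dirichlet form is
\begin{align*}
 \mathcal E_{\mathcal B}(f,f) = \sum_{i=1}^k \mathbb E_\pi\bigl[\text{Var}\bigl(f \,\big|\, \sigma_{V\setminus B_i}\bigr)\bigr]\,.
\end{align*}
The gap of $\mathcal B$ then gives $\text{Var}_\pi(f) \le \gap_{\mathcal B}^{-1} \mathcal E_{\mathcal B}(f,f)$.

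Next, for each fixed block $B_i$ and each boundary configuration $\zeta$ on $V\setminus B_i$, I would apply the Poincar\'e inequality for single-site Glauber on $B_i$ with boundary $\zeta$, which reads
\begin{align*}
 \text{Var}_{\pi(\cdot \mid \zeta)}(f) \;\le\; \gap_{B_i^\zeta}^{-1}\,\mathcal E_{B_i^\zeta}(f,f)\,,
\end{align*}
where $\mathcal E_{B_i^\zeta}(f,f) = \sum_{v \in B_i} \sum_{\sigma: \sigma_{V\setminus B_i}=\zeta} \pi(\sigma\mid\zeta)\, p_{\sigma,\sigma^v}(f(\sigma^v)-f(\sigma))^2$. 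Taking the worst-case $\zeta$ and averaging over $\sigma_{V \setminus B_i}$ under $\pi$, this yields
\begin{align*}
 \mathcal E_{\mathcal B}(f,f) \;\le\; \Bigl(\max_i\max_\zeta \gap_{B_i^\zeta}^{-1}\Bigr) \sum_{i=1}^k \sum_{v\in B_i}\sum_{\sigma} \pi(\sigma)\,p_{\sigma,\sigma^v}\bigl(f(\sigma^v)-f(\sigma)\bigr)^2\,.
\end{align*}
Here I use the fact that the heat-bath transition rate at a vertex $v\in B_i$ in the restricted single-site chain coincides with the full-chain rate $p_{\sigma,\sigma^v}$, because both are determined by $\pi(\sigma_v=\cdot \mid \sigma_{V\setminus\{v\}})$ and $v\in B_i$ means the spin at $v$ is part of the $B_i$-update.

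Finally, I swap the order of summation on the right-hand side to get
\begin{align*}
 \sum_i \sum_{v\in B_i}\sum_\sigma \pi(\sigma) p_{\sigma,\sigma^v}(f(\sigma^v)-f(\sigma))^2 = \sum_v \bigl|\{i: v\in B_i\}\bigr|\sum_\sigma \pi(\sigma) p_{\sigma,\sigma^v}(f(\sigma^v)-f(\sigma))^2\,,
\end{align*}
which is bounded by $\chi_{\mathcal B}\,\mathcal E_V(f,f)$ using the definition of $\chi_{\mathcal B}$. Combining the three inequalities gives the claim. The only subtle step is the identification of the single-block Dirichlet form's vertex-flip rates with those of the global chain so that the sum-swapping goes through cleanly; this is immediate for heat-bath updates but is the main thing to check, and it is why the lemma takes the simple product form stated.
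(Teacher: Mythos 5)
The paper cites this result to \cite[Proposition 3.4]{FM} without providing its own proof, so there is no in-paper argument to compare against. Your proof is correct and is essentially the standard two-scale Poincar\'e argument found in that reference: you correctly identify the Dirichlet form of the block dynamics as $\mathcal E_{\mathcal B}(f,f) = \sum_i \mathbb E_\pi[\mathrm{Var}(f\mid \sigma_{V\setminus B_i})]$, apply the single-block Poincar\'e inequality uniformly in the boundary, observe that the heat-bath flip rate at $v\in B_i$ under the restricted chain agrees with the full-chain rate $p_{\sigma,\sigma^v}$ (both are $\pi(\sigma_v^v\mid\sigma_{V\setminus\{v\}})$), and then swap the order of summation to pay the overlap factor $\chi_{\mathcal B}$. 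The normalizations also line up with the paper's conventions (rate-$1$ clocks per block for $\mathcal B$, per vertex for the single-site chains), so the inequality $\gap_V^{-1}\le \chi_{\mathcal B}\gap_{\mathcal B}^{-1}\max_{i,\zeta}\gap_{B_i^\zeta}^{-1}$ follows exactly as stated.
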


\subsection{Inverse gap of the small bond block}
In what follows we shall leave the index $L$ fixed. We define three intervals $I_{V_1},I_{V_2},I_{V_3}$  by:
\begin{align}\label{eq:I-intervals}
 I_{V_1} = [0,N^{\rho})\,, \qquad I_{V_2} = [N^{\rho}, \abs{J_{(L+1)}} ]\,,  \qquad \text{and} \qquad I_{V_3} =  (\abs{J_{(L+1)}}, \infty)\,,
\end{align}
where $\rho=\rho(\gamma)$ is as in \eqref{eq:rho-def}. We split the vertex set into three blocks $V_1,V_2,V_3$ where $V_i$ consists of all vertices whose largest incident bond lies in $I_{V_i}$. {By item (2) of Lemma~\ref{lem:energy-drop-off-J}, for $i=2,3$ it is equivalent to just ask that it have an incident bond in $I_{V_i}$, and $I_{V_1}$ will be the remaining vertices.} A configuration $\sigma$ then naturally splits into its projections onto these blocks, $\sigma= (\sigma_{V_1},\sigma_{V_2},\sigma_{V_3})$. 

Note that by definition we have $V_3 = \bigcup_{i \leq L } \{v_i,w_i \}$. In proving Theorem~\ref{thm:mixing-below-frozen-bond}, we will be freezing an arbitrary configuration $\sigma_{V_3}$, dictating the well $\mathcal C$, and using the block dynamics on $\sigma_{V_1},\sigma_{V_2}$ with blocks $V_1,V_2$ to bound the mixing time of $\ol{X}(t)$. 

In this subsection, we bound the inverse spectral gap of Glauber dynamics run on the spins in $\sigma_{V_1}$ with arbitrary boundary conditions $\tau$ on $V_2 \cup V_3$, denoted $\gap_{V_1^\tau}^{-1}$. This can be bounded crudely by taking exponential in the maximum value of the Hamiltonian, because the sum of all bonds in $I_{V_1}$ is still smaller order than $|J_{(L+1)}|$. 

\begin{proposition} \label{Prop:Spectral-gap-on-complement}
Consider Glauber dynamics run on the spins in $V_1$ subject to boundary conditions $\tau = (\sigma_{V_2},\sigma_{V_3})$. Then there is a constant $C_1(\alpha,\beta)$ so that the inverse spectral gap $\gap_{V_1^\tau}$ satisfies,
\[
\gap_{V_1^\tau}^{-1} \leq  \exp( C_1 N^{1-\alpha \rho + \rho} \log N)\,.
\]
\end{proposition}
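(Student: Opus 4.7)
The plan is a standard Poincaré comparison that reduces the estimate to controlling $\pi_{\max}/\pi_{\min}$ of the restricted Gibbs measure, and then bounding that ratio by $\exp(\beta\cdot\mathrm{osc}(H_{V_1}))$. The exponent $N^{1-\alpha\rho+\rho}\log N$ in the target bound is precisely the $L^1$-row-sum estimate on small bonds provided by Lemma~\ref{lem:energy-drop-off-J}(3), so the pieces fit together cleanly.

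First I would identify the restricted stationary measure: with boundary $\tau=(\sigma_{V_2},\sigma_{V_3})$ frozen, the chain on $\{\pm1\}^{V_1}$ has stationary distribution $\pi_{V_1}^\tau(\sigma_{V_1})\propto\exp(\beta H_{V_1}(\sigma_{V_1};\tau))$, where
\[
H_{V_1}(\sigma_{V_1};\tau) = \sum_{\{i,j\}\subset V_1} J_{ij}\sigma_i\sigma_j + \sum_{i\in V_1,\,j\notin V_1} J_{ij}\sigma_i\tau_j.
\]
Consequently $\pi_{\max}/\pi_{\min}\leq\exp(\beta\cdot\mathrm{osc}(H_{V_1}))$, where the oscillation is taken over $\sigma_{V_1}\in\{\pm1\}^{V_1}$.

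Next I would bound the oscillation. By the definition of $V_1$, every $i\in V_1$ satisfies $\max_j|J_{ij}|<N^{\rho}$, so every bond that appears in $H_{V_1}$ (whether bulk or boundary) has magnitude below $N^\rho$. Therefore
\[
\mathrm{osc}(H_{V_1}) \;\leq\; 2\sum_{i\in V_1}\sum_{j\ne i}|J_{ij}| \;\leq\; 4\!\!\sum_{(i,j):\,|J_{ij}|<N^\rho}\!\!|J_{ij}| \;\leq\; 4C_\alpha N^{1-\alpha\rho+\rho}\log N,
\]
the last inequality by Lemma~\ref{lem:energy-drop-off-J}(3).

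Finally I would pass from this ratio to the gap. A standard Dirichlet-form comparison of heat-bath Glauber with the lazy simple random walk on the hypercube $\{\pm1\}^{V_1}$ (which has gap $\Theta(1/|V_1|)$) yields $\gap_{V_1^\tau}^{-1} \leq O(|V_1|)\cdot\pi_{\max}/\pi_{\min} \leq O(N)\cdot\exp(4\beta C_\alpha N^{1-\alpha\rho+\rho}\log N)$. Taking logs, the polynomial prefactor is absorbed into the exponent and we obtain the claimed bound for a suitable $C_1=C_1(\alpha,\beta)$. There is no real obstacle here; the argument is a pure energy-budget computation, and the key input is that $V_1$ is defined exactly so that every incident bond sits below the $N^\rho$ threshold, which forces the oscillation budget to match the $L^1$-row-sum estimate of Lemma~\ref{lem:energy-drop-off-J}(3).
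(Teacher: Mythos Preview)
Your proposal is correct and follows essentially the same route as the paper: both bound the oscillation of the restricted Hamiltonian via Lemma~\ref{lem:energy-drop-off-J}(3) (using that every bond touching $V_1$ lies below the $N^\rho$ threshold), then perform a Dirichlet-form/variance comparison against simple random walk on the hypercube $\{\pm1\}^{V_1}$ to transfer the gap. The only cosmetic difference is that the paper compares with the continuous-time walk (gap exactly $1$) rather than the lazy discrete walk, so it avoids the extra $O(|V_1|)$ prefactor you absorb into the exponent; either way the bound is the same.
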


\begin{proof} The Gibbs' measure conditioned on boundary conditions $\tau$ has relative weights 
\[\pi^{\tau} (\sigma_{V_1}) = \frac{1}{Z_{N,\tau}} \exp(  \beta H_{1}^{\tau}(\sigma_{V_1})) \, ,
\]
where $Z_{N,\tau}$ is its partition function to make it a probability distribution, and 
    \[
    H_{1}^{\tau} (\sigma_{V_1}) = \sum_{i,j \in V_1} J_{ij} \sigma_i \sigma_j + \sum_{i \in V_1} \sum_{j \in V_{2} \cup V_{3}} J_{ij} \sigma_i \tau_j \,.
    \]
    Lemma~\ref{lem:energy-drop-off-J} part (3) implies that this Hamiltonian satisfies a uniform upper bound,
    \begin{align}\label{eq:H-upper-bound}
    \max_{\sigma_{V_1}}\abs{H^{\tau}_1 (\sigma_{V_1} ) } \leq  C_{\alpha} N^{1+\rho -\alpha \rho} \log N \,.
    \end{align}
    We will bound $\gap_{V_1^\tau}^{-1}$ by exponential in the right-hand side of~\eqref{eq:H-upper-bound} via spectral gap comparison.  
    Let $\ol{\mathcal{E}}$ be the Dirichlet form of the continuous-time random walk on the hypercube with vertex set $V_1$, and let $\mathcal{E}$ denote the Dirichlet form of the Glauber dynamics run on $V_1$. That is, 
    \begin{align*}
        \ol{\mathcal{E}}(f) &= \sum_{\substack{x \sim y}} (f(x)-f(y))^2 \frac{1}{ 2^{\abs{V_1}}}\,,  \qquad \text{and}\qquad  \mathcal{E}(f) = \sum_{\substack{x \sim y}} (f(x)-f(y))^2  \frac{\pi^{\tau}(x) \pi^{\tau}(y)}{\pi^{\tau}(x)+\pi^{\tau}(y)}\,.
    \end{align*}
In order to compare $\ol{\mathcal E}$ and $\mathcal E$, it suffices to upper bound 
\begin{align*}
 \frac{1}{ 2^{\abs{V_1}}} \Big( \frac{\pi^{\tau}(x) \pi^{\tau}(y)}{\pi^{\tau}(x)+\pi^{\tau}(y)} \Big)^{-1}
 & \le  \frac{1}{2^{\abs{V_1}}} \max_{\substack{x \sim y }} \Big(\frac{1}{\pi^{\tau}(x)} + \frac{1}{\pi^{\tau}(y)}\Big) \\ 
 & = { \frac{Z_{N,\tau}}{2^{\abs{V_1}}} \max_{ x\sim y}\Big( \exp(-\beta H_{1}^{\tau}(x)) + \exp(-\beta H_{1}^{\tau}(y) )\Big) }
 \\
 &\leq 2\frac{Z_{N,\tau}}{2^{\abs{V_1}}} \exp(C_\alpha \beta N^{1+\rho -\alpha \rho} \log N ) \, ,
\end{align*}
where the last line follows by~\eqref{eq:H-upper-bound}.  

Comparing variances between the uniform measure on $\{\pm 1\}^{V_1}$ and $\pi^{\tau}$ by similarly using~\eqref{eq:H-upper-bound} and using the variational form of the spectral gap~\eqref{eq:gap-variational-form}, we get a cancellation between the terms $Z_{N,\tau}/2^{|V_1|}$, leaving us with {(see e.g.,~\cite[Lemma 13.18]{LP})   } 
\[
\gap_{V_1^\tau}^{-1} \leq  2 \exp( C_\alpha \beta N^{1+\rho -\alpha \rho} \log N ) \gap_{V_1,\mathsf{srw}}^{-1}\,,
\]
where $\gap_{V_1,\mathsf{srw}}$ is the spectral gap of continuous-time random walk on $\{\pm 1\}^{V_1}$. 
This is in turn exactly $1$ by diagonalizing, and so, we get  the claimed bound. 
\end{proof}

\subsection{Inverse gap of the intermediate bond block} 
We next bound the inverse gap of the chain restricted to update $\sigma_{V_2}$, uniformly in the choice of the boundary values $\tau$ on $\sigma_{V_1},\sigma_{V_3}$. This bound uses the sparsity of the interactions within $V_2$ to argue that its inverse gap is dominated by its worst transition time, which is exponential in $2\beta |J_{(L+1)}|$. 

\begin{proposition} \label{Prop:Largeish-bonds-spectral-gap}
   Consider Glauber dynamics run on the spins in $V_2$ with arbitrary boundary conditions $\tau= (\sigma_{V_1},\sigma_{V_3})$. There exists $C_2(\alpha,\beta)>0$ such that,
  \[
  \max_{\tau} \gap_{V_2^\tau}^{-1} \leq  \exp \left( 2\beta \abs{J_{(L+1)}} +C_2N^{\frac{1}{2}+\frac{1}{2\alpha}} \right)\,.
  \]
\end{proposition}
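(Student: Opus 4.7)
My plan is to apply block dynamics (Lemma~\ref{lem:block-dynamics}) on $V_2$, with blocks given by the disjoint pairs formed by the heavy bonds inside $V_2$, and to control both the within-block and between-block gaps sharply.

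First I would establish that $V_2$ has a perfect matching structure. Every vertex in $V_2$ has, by definition, some incident bond of size $\ge N^{\rho}$; by Lemma~\ref{lem:energy-drop-off-J}(1) it has exactly one such bond. This bond cannot land in $V_1$ (whose vertices have all bonds $< N^{\rho}$), nor in $V_3$ (since each $V_3$ vertex already uses its unique $\ge N^{\rho}$ bond for one of $e_1,\ldots,e_L$). So every bond $\ge N^{\rho}$ incident to $V_2$ lies within $V_2$, and $V_2$ decomposes into disjoint pairs $\{v_l^-,v_l^+\}_{l=1}^m$ joined by bonds $\tilde J_l$ with $|\tilde J_l|\in[N^{\rho},|J_{(L+1)}|]$.

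Second, I would bound $\gap_{\text{pair}^{\tau'}}^{-1}$ uniformly over boundaries $\tau'=(\sigma_{V_1},\sigma_{V_3},\sigma_{V_2\setminus\{v_l^\pm\}})$. The restricted Hamiltonian on the pair reduces to $\tilde J_l\sigma_{v_l^-}\sigma_{v_l^+}+h_{v_l^-}\sigma_{v_l^-}+h_{v_l^+}\sigma_{v_l^+}$, where $|h_v|$ is bounded by the sum of non-maximum row entries at $v$, which is $\le CN^{1/(2\alpha)}$ by Lemma~\ref{lem:energy-drop-off-J}(2). A direct four-state gap computation then yields
$\gap_{\text{pair}^{\tau'}}^{-1}\le \exp(2\beta|\tilde J_l|+C'\beta N^{1/(2\alpha)})\le \exp(2\beta|J_{(L+1)}|+C'\beta N^{1/(2\alpha)})$.

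Third, to bound the gap of the block dynamics $\mathcal B$ I would compare it to an auxiliary product chain $\tilde{\mathcal B}$ associated to the measure $\tilde\pi\propto\exp(\beta\tilde H)$, where $\tilde H$ drops all bonds of $\pi_{V_2^\tau}$ between different matching pairs inside $V_2$. Under $\tilde\pi$ the pairs are independent, so $\tilde{\mathcal B}$ is a product of independent pair-resamplings with $\gap_{\tilde{\mathcal B}}^{-1}=\Theta(1)$. The log-density satisfies $|\log(\pi/\tilde\pi)|\le 2\beta S$, where $S$ is the sum of $|J_{ij}|$ over cross-pair bonds within $V_2$. By Lemma~\ref{lem:gaps}(2) (or equivalently by summing Lemma~\ref{lem:energy-drop-off-J}(2) over $V_2$), $S\le DN^{1-\alpha\rho+1/(2\alpha)}\le DN^{1/2+1/(2\alpha)}$ since $\rho\ge 1/(2\alpha)$. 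A standard Dirichlet-form comparison (\emph{à la} Diaconis--Saloff-Coste) then gives $\gap_{\mathcal B}^{-1}\le\exp(O(\beta N^{1/2+1/(2\alpha)}))$. Combining via Lemma~\ref{lem:block-dynamics} with $\chi_{\mathcal B}=1$ (disjoint blocks), the product of the two exponentials absorbs the $N^{1/(2\alpha)}$ error into $N^{1/2+1/(2\alpha)}$ and yields the claimed bound. The trickiest step is the Dirichlet-form comparison between $\mathcal B$ and $\tilde{\mathcal B}$: both live on the same space and the density ratio is controlled by $e^{2\beta S}$, but one must also bound the ratio of the pair-conditional probabilities uniformly over configurations to transfer gap bounds between the two chains, which is the crucial quantitative input.
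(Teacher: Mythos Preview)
Your proposal is correct and uses the same key ingredients as the paper---the disjoint-pair structure of $V_2$, the four-state gap bound $\exp(2\beta|J_{(L+1)}|+O(\beta N^{1/(2\alpha)}))$ on each pair, and the bound $S\le DN^{1-\alpha\rho+1/(2\alpha)}$ from Lemma~\ref{lem:gaps}(2) on the total cross-pair interaction---but it packages them slightly differently. The paper avoids the block-dynamics layer: it observes that the product of the individual pair chains \emph{is} single-site Glauber dynamics for the product Hamiltonian $H_\otimes$ (which drops the cross-pair terms), gets its gap by tensorization, and then directly compares the Dirichlet forms of Glauber for $H_2^\tau$ versus $H_\otimes$ using $\max_\sigma|F(\sigma)|\le S$. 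Your route goes through Lemma~\ref{lem:block-dynamics} and then compares the two block-dynamics chains, which requires controlling ratios of pair-conditional probabilities as you note; this works for the same reason (the log-density perturbation is at most $2\beta S$), but the paper's direct comparison of single-site Glauber chains is a bit cleaner since the transition kernels have an explicit form $\pi(\sigma)\pi(\sigma')/(\pi(\sigma)+\pi(\sigma'))$ that makes the comparison a two-line computation. Either way, the exponents combine exactly as you say.
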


\begin{proof}
Fix $i_1<i_2$ so that $\abs{J_{i_1 i_2}} \in I_{V_2}$ and consider the 2-spin Hamiltonian given by: 
\begin{align}\label{eq:4-state-chain-Hamiltonian}
H_{i_1 i_2} (\sigma) =  J_{i_1 i_2} \sigma_{i_1} \sigma_{i_2} +  \sum_{l=1}^{2} \sum_{j\in V_1 \cup V_3} \tau_j J_{i_l j} \sigma_{i_l}\,.
\end{align}
The Glauber dynamics run at inverse temperature $\beta$ corresponding to this Hamiltonian, on $\{\pm 1\}^{\{i_1,i_2\}}$ is a $4$-state Markov chain, which is easily analyzed explicitly. In particular, by the method of canonical paths for the Glauber Dynamics with this Hamiltonian, 
there is $c>0$ so that its spectral gap $\gap_{i_1,i_2}$ satisfies (See Appendix \ref{MCP} for a proof) 
\begin{align}\label{eq:4-state-chain-gap}
\gap_{i_1,i_2}^{-1} \leq  C \exp \Big( 2\beta \max_{l\in \{1,2\}} \sum_{j} \abs{J_{i_l j} } \Big)\,.
\end{align}
By Lemma~\ref{lem:energy-drop-off-J} part (2), the choice of $\gamma$, and the definition of $V_2$, this sum is uniformly bounded over all pairs $(i_1,i_2)\in V_2$ by $\abs{J_{(L+1)}}+CN^{1/2\alpha}$ for some other $C$. Consequently, absorbing the prefactor coefficient $C$ into the one in the exponent, for all $i_1,i_2$ with $\abs{J_{i_1 i_2}} \in I_{V_2}$: 
\[
\gap_{i_1,i_2}^{-1} \leq  \exp \Big( 2\beta \abs{J_{(L+1)}}+2\beta CN^{1/2\alpha} \Big)\,.
\]
Consider the continuous-time product chain over $\{\pm 1\}^{\{v,w\}}$ for $v,w\in V_2$ having $|J_{vw}|\in I_{V_2}$. By tensorization of the spectral gap, its spectral gap (which we denote by $\gap_{\otimes}$) satisfies 
\begin{align}\label{eq:product-chain-bound}
\gap_{\otimes}^{-1}  = \max_{i_1,i_2\in V_2} \gap_{i_1,i_2}^{-1} \leq   \exp \left( 2\beta \abs{J_{(L+1)}}+2\beta CN^{1/2\alpha} \right)\,.
\end{align}
The difference between this product chain and the true Glauber dynamics on $V_2$ given $\tau$ on $V_1 \cup V_3$ is that we have disregarded all the interactions between pairs of vertices  $v,w\in V_2$ that do not have $|J_{vw}|\in I_{V_2}$. In particular, the product chain is a Glauber dynamics on $V_2$ for the following Hamiltonian, 
\[
H_{\otimes}(\sigma) = \sum_{\substack{i_1,i_2: |J_{i_1 i_2}| \in I_{V_2}}} H_{i_1 i_2} (\sigma)\,.
\]
We use a spectral gap comparison to translate this into a bound on the inverse gap of the Glauber dynamics on $V_2^\tau$ itself. Let us write 
\[
 H_2(\sigma)= H_2^{\tau}(\sigma):= H_{\otimes}(\sigma) + F(\sigma) \qquad \text{where} \qquad  F(\sigma) = \sum_{\substack{i_1,j_1 \in V_{2} :  \abs{J_{i_1 j_1} } \not \in I_{V_2}}}  J_{i_1 j_1} \sigma_{i_1} \sigma_{j_1}\,.
\] 
We will compare the Dirichlet forms of Glauber dynamics for Hamiltonians $H_2^{\tau}$ and $H_{\otimes}$ to get a bound on $\gap_{V_2^\tau}^{-1}$. Let $\sigma,\sigma'$ be configurations differing in one coordinate of ${V_2}$. Write $H_2,H_2'$, $F,F'$, and $H_{\otimes},H_{\otimes}'$ respectively for these Hamiltonians evaluated at either $\sigma$ or $\sigma'$. Also, let  $Z_2,Z_{\otimes}$ be the respective partition functions. In order to show for some $R_N$ that $\mathcal E_{\otimes}(f,f) \le R_N \mathcal E(f,f)$, it suffices to establish that for every $\sigma, \tau$,  
\[
\frac{1}{Z_\otimes} \frac{e^{\beta H_\otimes} e^{\beta H_\otimes'}}{e^{\beta H_\otimes} + e^{\beta H_\otimes'} } \le  \frac{R_N}{Z_2} \frac{e^{\beta H_2} e^{\beta H_2'}}{e^{\beta H_2} + e^{\beta H_2'} } \,.
\]
Equivalently (with a bit of algebra) we can use,
\begin{align*}
R_N = \max_{\tau} \max_{\sigma, \sigma'}\frac{Z_2}{Z_\otimes} \Big( \frac{e^{-\beta F'}}{1+e^{\beta (H_\otimes'-H_\otimes)}} + \frac{e^{-\beta F}}{1+e^{\beta(H_\otimes-H_\otimes')}} \Big)\,.
\end{align*} 
where the maximum over $\sigma,\sigma'$ is over configurations on $V_2$ differing on only one coordinate. 
From a variance comparison and the variational form of the spectral gap, {  (see e.g., \cite[Lemma 13.18]{LP}) } 
\[
\gap_{V_2^\tau}^{-1} \leq  \Big(\max_{\sigma\in \{\pm 1\}^{V_2}} \frac{\pi^{\tau}(\sigma)}{\pi_\otimes(\sigma)}\Big) R_N \gap_\otimes^{-1}\,.
\]
Note that $R_N$ is bounded by $\frac{2Z_2}{Z_\otimes} \max_{\sigma} e^{-\beta F(\sigma)}$. 
Canceling the ratio of the partition functions with those coming from the ratio of $\pi^\tau$ to $\pi_{\otimes}$, we get the upper bound, 
\[
\gap_{V_2^\tau}^{-1} \leq 2 \big(\max_{\sigma} e^{ \beta F(\sigma)}\big)\big(\max_{\sigma} e^{-\beta F(\sigma)} \big) \gap_\otimes^{-1}\,.
\]
So now to bound $\max_{\sigma} F(\sigma)$, note we can replace the $J_{ij}$'s appearing in $F$ with their absolute values. By Lemma~\ref{lem:gaps} the sum of the absolute values of the $J_{ij}$'s appearing in $F$ is bounded by $D N^{1-\alpha \rho + 1/(2\alpha)}$, so the product of the maximums of $e^{F(\sigma)}$ and $e^{-F(\sigma)}$ is at most $\exp(2D N^{1-\alpha \rho + 1/(2\alpha)})$. Combining with~\eqref{eq:product-chain-bound}, and using $1-\alpha \rho < 1/2$, we get uniformly over $\tau$, 
\[
(\gap_{V_2^\tau})^{-1} \leq  \exp( 2\beta \abs{J_{(L+1)} } + C_2 N^{\frac{1}{2}+\frac{1}{2\alpha}} ) \, ,
\]
for $C_2$ a constant depending only on $\beta$ and $\alpha$.
\end{proof}

\subsection{Bounding the block dynamics} ~\label{sec:mixing-block-estimate}
The remaining step is to bound the inverse gap of the block dynamics with blocks $V_1,V_2$, whence we could use Lemma~\ref{lem:block-dynamics} to complete the proof of Theorem~\ref{thm:mixing-below-frozen-bond}.

\begin{proof}[Proof of Theorem~\ref{thm:mixing-below-frozen-bond}]
    By Lemma~\ref{lem:block-dynamics}, together with the bounds of Propositions~\ref{Prop:Spectral-gap-on-complement}--\ref{Prop:Largeish-bonds-spectral-gap}, the spectral gap on the single site Glauber dynamics on $V_1 \cup V_2$ with boundary condition $\eta$ for $V_3$, equivalently restriction of the configuration to the well $\mathcal{C}$ induced by $\eta$ (written $\gap_{V_1 \cup V_2}^{\eta}$), is bounded by 
    \begin{align}\label{eq:Glauber-block-bound}
 (\gap_{V_1 \cup V_2}^{\eta})^{-1} & \leq 2 \gap_{\mathcal{B} }^{-1} \max( \max_{\tau} (\gap_{V_1^\tau} )^{-1}, \max_{\tau} (\gap_{V_2^\tau})^{-1}) \nonumber \\
 & \leq  \exp\big( 2\beta \abs{J_{(L+1)} } +  C_2 N^{\frac{1}{2}+ \frac{1}{2\alpha}} \big) \gap_{\mathcal{B}}^{-1} \, ,
    \end{align}
where we used the fact that $1-\alpha \rho + \rho <\gamma$ and $N^{\gamma} = O(|J_{(L+1)}|)$.
 To bound the inverse spectral gap of the block dynamics, by~\eqref{eq:tmix-trel-comparison}, it is sufficient to bound its mixing time. 
    Towards that end, let us construct a coupling of the block dynamics chains started from any pair of configurations $\sigma=(\sigma_{V_1},\sigma_{V_2})$ and $\sigma' = (\sigma'_{V_1}, \sigma'_{V_2})$ in $\mathcal C$. Write $X(t)= (X_{V_1}(t),X_{V_2}(t))$ and $X'(t) = (X'_{V_1}(t), X'_{V_2}(t))$ for the block dynamics processes started from $\sigma$ and $\sigma'$. 

     For two block dynamics chains with initializations $\sigma,\sigma'$, we couple them as follows: assign the blocks $V_1,V_{2}$ exponential rate-$1$ clocks, and when the clock of block $V_i$ rings, resample $V_i$ in both chains in the following manner depending on which block was chosen: 
    \begin{itemize}
        \item $V_1$: If $X_{V_2} \equiv X_{V_2}'$, use the same randomness to draw their configurations on $V_1$ so that they also agree on $V_1$; else, sample on $V_1$ independently. 
        \item $V_{2}$: Noticing that $V_2$ is decomposed as $(\{v_{l},w_l\})_{l\ge L+1}$. Iteratively, for $l \geq L+1$, sample $\{X_{v_l}, X_{w_l}\}$ and $\{X_{v_l}',X_{w_l}'\}$ from the stationary measure conditional on $\eta$ and their respective configurations on $V_{1}\cup \bigcup_{L+1\le j<l}\{v_j,w_j\}$, using the optimal coupling on each such sample.  
    \end{itemize}
    By construction, under an event which we call $A$, that in the time window $[0,1]$ at least two block updates occur, and
    \begin{enumerate}
        \item The first update occurs in block $V_{2}$ (say at time $t$) after which $X_{V_2}(t) \equiv X_{V_2}'(t)$, and 
        \item The second update occurs (say at time $t'$) in block $V_1$,
    \end{enumerate}
    we will have that $X(s) \equiv X'(s)$ for all $s\ge t'$. By the Markov property, we can reattempt for the above event in the time window $[i,i+1]$ for all $i$, until it occurs. The mixing time is therefore bounded by 
    $$\min\{t \ge 0: (1-p_A)^t< 1/4\} = O(p_{A}^{-1})\,,
    $$
    where we use $p_A$ to be the minimal (over pairs of initializations $\sigma,\sigma'$) probability of $A$ occurring. Our task turns to lower bounding $p_A$. The probability that there are at least two updates in $[0,1]$, the first being on $V_2$ and the second being on $V_1$ is at least some universal constant, say $1/10$. 

    Given the times of the clock rings satisfying the above requirement, call them $\tau_{V_2}<\tau_{V_1}$. Under the coupling, the event that $X_{V_2}(\tau_{V_2}) = X'_{V_2}(\tau_{V_2})$ is evidently independent of the clock ring times, and so we just need to lower bound that probability under the coupling of the samples drawn on $V_2$. By construction of the coupling on an update on $V_2$, we have uniformly over the configurations at time $\tau_{V_2}^-$ (given by filtration $\mathcal F_{\tau_{V_2}^-}$), 
    \begin{align}\label{eq:V-2-coupling-prob}
        \mathbb  P(X_{V_2}(\tau_{V_2}) & \ne X_{V_2}'({\tau_{V_2}})  \mid \mathcal F_{\tau_{V_2}^-}, \mathcal T) \nonumber \\
        & \le 1- \prod_{l \ge L+1} \Big(1-\max_{\zeta,\zeta'} \TV{\pi^{\eta,\zeta}(\sigma(\{\sigma_{v_l},\sigma_{w_l}\}))}{ \pi^{\eta,\zeta'}(\sigma(\{\sigma_{v_l},\sigma_{w_l}\}))}\Big)\,,
    \end{align}
    where the conditioning on $\mathcal T$ is on the clock ring sequence, and where    
    the maximum is over $\zeta,\zeta'$ that are configurations on the complement of $\{v_l,w_l\}$ in $V_1 \cup V_2$. To arrive at this, we used the fact that the optimal coupling attains the total-variation distance, and the total-variation distance when we haven't conditioned on certain sites in $V_{2}$ (those with index bigger than $l$) is at most the distance when taking a worst-possible pair of configurations on them.

We now bound the total-variation distances in~\eqref{eq:V-2-coupling-prob}. 
For notational simplicity, let us call the $v_l,w_l$ pair we're considering $x,y$ and replace the induced boundary conditions by $\tau = (\eta,\zeta)$ and $\tau' = (\eta,\zeta')$.  
Let us assume without loss of generality that $J_{xy} > 0$. Then the worst case for the total-variation distance occurs if $\tau$ and $\tau'$ are such that for every $v$,  
\[
\sgn{ J_{xv} \tau_v} = \sgn{J_{yv} \tau_v }=1 \qquad \text{and} \qquad \sgn{ J_{xv} \tau_v' } = \sgn{ J_{yv} \tau_v' } = -1 \,.
\]
For notational brevity, in what follows, write $\tau_{ab}$ for the configuration $\sigma$ equal to $\tau$ for all $v \neq x,y$ and $\sigma_x=a, \sigma_y = b$. In this case we may write out the total-variation distance as 
    \[d_\tv(\pi((\sigma_x,\sigma_y)\in \cdot | \tau),\pi( (\sigma_x,\sigma_y) \in \cdot | \tau' ){)}
    =  \frac{1}{2}\sum_{ a,b = \pm 1} \abs{\pi( \tau_{ab} | \tau ) - \pi ( \tau'_{ab} | \tau')}\,.
    \]
The main contribution will come from the $a=b$ terms so we deal with those first. E.g., the $a=b=+1$ term on the right-hand side above becomes 
\begin{align*}
    \frac{e^{\beta H(\tau_{++})}}{e^{\beta H(\tau_{++})} + e^{\beta H(\tau_{+-})} + e^{\beta H(\tau_{-+})} + e^{\beta H(\tau_{--})}} - \frac{e^{\beta H(\tau_{++}')}}{e^{\beta H(\tau_{++}')} + e^{\beta H(\tau_{+-}')} + e^{\beta H(\tau_{-+}')} + e^{\beta H(\tau_{--}')}}\,.
\end{align*}
Denote by $L_x = 
  \sum_{v} \abs{J_{xv}}$, $L_y =  \sum_{v} \abs{J_{yv}}$ and $L_{x,y} =  \sum_{v \neq x,y} \abs{J_{xv}} + \abs{J_{yv}}$. If we factor $e^{H(\tau_{++})}$ and $e^{H(\tau_{++}')}$ out of the denominators, the above is equal to: 
\[
\frac{1}{1 + e^{-2\beta L_y} + e^{-2\beta L_x} + e^{-2\beta L_{xy}}} - \frac{1}{1+e^{2\beta L_y -4\beta \abs{J_{xy}}} +e^{2\beta L_x-4\beta \abs{J_{xy}}} + e^{2\beta L_{xy}}} = (\star)\,.
\]
Rearranging, we see that 
\begin{align*}
 (\star) &=    \frac{e^{2\beta L_y -4\beta \abs{J_{xy}}} +e^{2\beta L_x-4\beta \abs{J_{xy}}} + e^{2\beta L_{xy}} - e^{-2\beta L_y} - e^{-2\beta L_x} - e^{-2\beta L_{xy}}}{(1 + e^{-2\beta L_y} + e^{-2\beta L_x} + e^{-2\beta L_{xy}})(1+e^{2\beta L_y -4\beta \abs{J_{xy}}} +e^{2\beta L_x-4\beta \abs{J_{xy}}} + e^{2\beta L_{xy}})}
    \\
    &\leq  \frac{1 + e^{2\beta L_y-4\beta \abs{J_{xy}} - 2\beta L_{xy}} + e^{2\beta L_x - 4\beta \abs{J_{xy}} -2\beta L_{xy} } - e^{-4\beta L_{xy} }} {(1 + e^{-2\beta L_y} + e^{-2\beta L_x} + e^{-2\beta L_{xy}})} 
    \\
    &\leq  \frac{1 + e^{2\beta L_y-4\beta \abs{J_{xy}} - 2\beta L_{xy}} + e^{2\beta L_x - 4\beta \abs{J_{xy}} -2\beta L_{xy} }} {(1 + e^{-2\beta L_y} + e^{-2\beta L_x} + e^{-2\beta L_{xy}})} - \frac{1}{4} e^{-4\beta L_{xy} } \leq 1- \frac{1}{4}e^{-4\beta L_{xy}}\,.
\end{align*}
Here, the last inequality follows as  
{ 
\[
\frac{1 + e^{2\beta L_y-4\beta \abs{J_{xy}} -2\beta L_{xy}} + e^{2\beta L_x - 4\beta \abs{J_{xy}} -2\beta L_{xy} }}{(1 + e^{-2\beta L_y} + e^{-2\beta L_x} + e^{-2\beta L_{xy}})} = \frac{1+e^{-2\beta L_x} + e^{-2\beta L_y}}{1 + e^{-2\beta L_y} + e^{-2\beta L_x} + e^{-2\beta L_{xy}}} \leq 1 \, ,
\]
where we used $L_y-2\abs{J_{xy} } - L_{xy} = - L_x $. 
}
A symmetric argument deals with the term corresponding to $a= b= -1$ and gives a matching upper bound. To deal with the terms $(1,-1),(-1,1)$, we note that
\[
\frac{e^{\beta H(\tau_{+-})}}{e^{\beta H(\tau_{++})} + e^{\beta H(\tau_{+-})} + e^{\beta H(\tau_{-+})} + e^{\beta H(\tau_{--})}} \leq \frac{1}{1+e^{\beta (H(\tau_{++}{ )} -H {(\tau_{+-} )}) } } \leq \exp (-\beta N^\rho/2)\,, 
\]
where the last inequality follows from part (2) of Lemma~\ref{lem:energy-drop-off-J}. The same inequality holds for $\tau'$ and for $(-1,+1)$. 
Combining, the total-variation distance we are interested is bounded above as,
\[
\frac{1}{2} \sum_{ a,b = \pm 1} \abs{\pi( \tau_{ab} | \tau ) - \pi ( \tau'_{ab} | \tau')} \le 
1- \frac{1}{4} e^{-4\beta L_{xy} } + 2 e^{-\beta N^{\rho}/2}\,.
\]
In particular, the optimal coupling gives the probability of equality on $\{x,y\}$ of at least  
\[
\frac{1}{4} e^{-4\beta  L_{xy} } - 2 e^{-\beta N^{\rho}/2} = \frac{1}{4} e^{-4\beta L_{xy}} (1-8 e^{-\beta N^{\rho}/2 + 4 \beta L_{xy} }) \geq \frac{1}{8} e^{-4\beta  L_{xy} }\,,
\]
where again the last inequality follows from applying part (2) of Lemma~\ref{lem:energy-drop-off-J}. Taking the product over $\{v_l,w_l\}_l$ and plugging in to~\eqref{eq:V-2-coupling-prob}, we get that 
\[
 \mathbb P(X_{V_2}(\tau_{V_2}) \ne X_{V_2}'(\tau_{V_2}^-) \mid \mathcal F_{\tau_{V_2}}, \mathcal T) \ge {8^{-|\{x,y: \abs{J_{xy}} \in I_{V_2} \}| }} e^{-4\beta \sum_{x,y:|J_{x,y}| \in I_{V_2}} L_{xy} }\,.
\]
The right-hand-side times $1/10$, say, therefore lower bounds $p_A$ uniformly over $\sigma,\sigma'$, yielding an upper bound on the mixing time by inverting the right-hand-side above (up to a factor of a universal constant). Namely, 
 by applying item (2)
 of Lemma~\ref{lem:gaps},
 and noting by a Chernoff bound that with $\p_{\J}$ probability $1-o(1)$  the number of pairs $(i,j)$ with $\abs{J_{ij}} \in I_{V_2}$ is bounded by $ 8N^{1-\alpha \rho}$,
 the mixing time of the block dynamics is bounded above by, 
\[
\tmix(\mathcal{B}) \leq 4 \exp( { \beta  } C_{\alpha} N^{1-\alpha \rho +1/(2\alpha)} \log N  + 8\log(8)  N^{1-\alpha \rho } ) \leq \exp({ C }N^{\frac{1}{2}+ \frac{1}{2\alpha}} )   \,.
\]
This implies the same bound on the inverse spectral gap of the block dynamics, up to a factor of $2$ by~\eqref{eq:tmix-trel-comparison}. 
Plugging in to~\eqref{eq:Glauber-block-bound}, the inverse spectral gap of Glauber dynamics restricted to $V_1 \cup V_2$ with boundary condition $\sigma_{V_3}$ on $V_3$ is bounded by  
\[ 
\exp( 2\beta \abs{J_{(L+1)}} + D N^{\frac{1}{2\alpha}+\frac{1}{2}} ) \, ,
\]
for some $D>0$ depending only on $\beta$ and $\alpha$. 
{ To translate this to a mixing time bound, if we denote by $Z_{V_3}$ the partition function with $\sigma_{V_3}$ frozen, then $\log (Z_{V_3})$ and $\log \exp(\beta H_N(\sigma))$ are both $O(N^{1/\alpha})$. Then applying~\eqref{eq:tmix-trel-comparison} incurs a factor of $O(N^{2/\alpha})$ which can be absorbed into the constant $D$.} 
\end{proof}

{
Having proved Theorem ~\ref{thm:mixing-below-frozen-bond} we may now provide a short proof of Theorem ~\ref{thm:mixing-within-a-well}. 

\begin{proof}[Proof of Theorem~\ref{thm:mixing-within-a-well}] Fix a $t_{a,\gamma}$ and a metastable well $\mathcal C_i$ corresponding to a satisfying assignment to the top $K$ edges, and restrict the dynamics to $\mathcal C_i$. Lemma ~\ref{lem:gaps} implies that $ \abs{J_{(K-1)} } <  \frac{a}{2\beta}(N^{\gamma} - N^{\gamma-\xi} ) $ with $\p_{\mathbf{J}} $ probability $1-o(1)$. Theorem ~\ref{thm:mixing-below-frozen-bond}  with $L=K$ then implies that: 
\[
\tmix^{\mathcal{C}} \leq e^{-\frac{a}{2} N^{\gamma-\xi} } t_{a,\gamma} \, ,
\]
as claimed. 
The escape time bound follows as a consequence of Lemma ~\ref{lem:spin-structure} and the fact that by Lemma~\ref{lem:gaps}, $|J_L| \ge \frac{a}{2\beta} (N^{\gamma}+ N^{\gamma - \xi})$ with probability $1-o(1)$. 
\qedhere  
\end{proof}

}
\subsection{Analagous mixing time bounds for $Y$}

In this section we prove an analogue of Theorem~\ref{thm:mixing-below-frozen-bond} for the approximating well-process $Y$ defined in Theorem~\ref{thm:main1}. We remark that the proof is simpler than Theorem~\ref{thm:mixing-below-frozen-bond}, and follows the  approach of Proposition~\ref{Prop:Largeish-bonds-spectral-gap}. We begin by showing that $Y$ is reversible with respect to the restriction of the Gibbs' measure on $t_{a,\gamma}$-metastable wells.

\begin{lemma} \label{lem:Reversibility-of-Y} For any $\gamma \in (\frac{1}{2\alpha} + \frac{1}{1+\alpha}, \frac{1}{\alpha} )$ and any $a >0$ the process $Y(t)=Y_{a,\gamma}(t)$ is reversible with respect to the measure $\pi^Y$ assigning a  $t_{a,\gamma}$-metastable well $\mathcal C$ probability 
$$\pi^Y(\mathcal C) \propto \sum_{\sigma \in \mathcal C} \exp(\beta H (\sigma) )\,. $$ 
\end{lemma}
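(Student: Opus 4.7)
The plan is to verify detailed balance directly. Since the jumps of $Y$ flip individual coordinates, the only nontrivial detailed balance relation I need to check is, for each $l$ and each $Y \in \{\pm 1\}^K$,
\begin{align*}
    \pi^{Y}(Y)\,q(Y,Y^{(l)}) \;=\; \pi^{Y}(Y^{(l)})\,q(Y^{(l)},Y)\,,
\end{align*}
where $Y^{(l)}$ denotes $Y$ with its $l$-th coordinate flipped and $q$ is the transition rate. Reading off the generator from the definition, upon a ring of the $l$-th clock (rate $t_{a,\gamma} Z_l(Y)$) the coordinate is resampled uniformly and thus flips with probability $1/2$, so $q(Y,Y^{(l)}) = \tfrac{1}{2}t_{a,\gamma} Z_l(Y)$. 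Canceling the common factor $\tfrac{1}{2}t_{a,\gamma}$, the task reduces to showing
\begin{align*}
   \pi^{Y}(Y)\,Z_l(Y) \;=\; \pi^{Y}(Y^{(l)})\,Z_l(Y^{(l)})\,.
\end{align*}

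Next I would unfold both sides. Writing $\mathcal C_Y$ for the well with $\mathsf{S} = Y$, the definition of $\pi^Y$ and the conditional expectation in~\eqref{eq:Z(Y_s)} give
\begin{align*}
    \pi^Y(Y)\,Z_l(Y) \;\propto\; \sum_{\sigma \in \mathcal C_Y} e^{\beta H(\sigma)}\bigl(Z_{v_l}(\sigma) + Z_{w_l}(\sigma)\bigr)\,,
\end{align*}
with the same normalizing constant on both sides. So it suffices to show
\begin{align*}
    \sum_{\sigma \in \mathcal C_Y} e^{\beta H(\sigma)}\bigl(Z_{v_l}(\sigma)+Z_{w_l}(\sigma)\bigr) \;=\; \sum_{\tau \in \mathcal C_{Y^{(l)}}} e^{\beta H(\tau)}\bigl(Z_{v_l}(\tau)+Z_{w_l}(\tau)\bigr)\,.
\end{align*}

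The key algebraic identity I would use is that, for any vertex $v$,
\begin{align*}
    e^{\beta H(\sigma)}\,Z_v(\sigma) \;=\; e^{\beta H(\sigma)-2\beta m_v(\sigma)} \;=\; e^{\beta H(\sigma^{(v)})}\,,
\end{align*}
where $\sigma^{(v)}$ denotes $\sigma$ with its $v$-th spin flipped. I would then exhibit the bijection $\Phi\colon \mathcal C_Y \to \mathcal C_{Y^{(l)}}$ that simultaneously flips the two spins at $v_l$ and $w_l$: this preserves $\sigma_{v_l}\sigma_{w_l}$ (hence keeps the edge $e_l$ satisfied and all other $e_i$ untouched) but flips the $l$-th coordinate of $\mathsf{S}$, so it does land in $\mathcal C_{Y^{(l)}}$. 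Under $\tau = \Phi(\sigma)$ we have $\sigma^{(v_l)} = \tau^{(w_l)}$ and $\sigma^{(w_l)} = \tau^{(v_l)}$, so applying the identity above and then re-indexing gives
\begin{align*}
    \sum_{\sigma \in \mathcal C_Y} e^{\beta H(\sigma)} Z_{v_l}(\sigma)
    \;=\; \sum_{\sigma \in \mathcal C_Y} e^{\beta H(\sigma^{(v_l)})}
    \;=\; \sum_{\tau \in \mathcal C_{Y^{(l)}}} e^{\beta H(\tau^{(w_l)})}
    \;=\; \sum_{\tau \in \mathcal C_{Y^{(l)}}} e^{\beta H(\tau)} Z_{w_l}(\tau)\,,
\end{align*}
and symmetrically for the $Z_{w_l}(\sigma)$ term. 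Summing the two establishes the required identity and hence detailed balance.

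There is no real obstacle here; the one thing I would be careful about is verifying that $\Phi$ really is a bijection between the two wells (i.e., that flipping both endpoints of $e_l$ keeps us inside the set of configurations satisfying all of $E_{a,\gamma}$, and flips $\mathsf{S}$ exactly in the $l$-th coordinate). This follows immediately from the vertex-disjointness of $e_1,\dots,e_K$ that is part of the standing high-probability event on $\mathbf J$.
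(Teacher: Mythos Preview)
Your proof is correct and follows essentially the same approach as the paper: both verify detailed balance by reducing to the identity $\sum_{\sigma \in \mathcal C_+}(Z_{v_l}(\sigma)+Z_{w_l}(\sigma))e^{\beta H(\sigma)} = \sum_{\tau \in \mathcal C_-}(Z_{v_l}(\tau)+Z_{w_l}(\tau))e^{\beta H(\tau)}$ and use the same bijection $\Phi$ flipping both endpoints of $e_l$. Your single-flip identity $e^{\beta H(\sigma)}Z_v(\sigma)=e^{\beta H(\sigma^{(v)})}$, combined with the observation $\sigma^{(v_l)}=\Phi(\sigma)^{(w_l)}$, is a slightly slicker packaging than the paper's explicit decomposition $H=|J_{(l)}|+m_{v_l}'+m_{w_l}'+H_l$, but the content is the same.
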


\begin{proof}
It will suffice to check the detail balanced equations hold. Fix a $t_{a,\gamma}$-metastable well $C$, and let $\mathcal C_+$ and $\mathcal C_-$ be two wells which differ only by one edge's satisfying assignment, say the $l$'th, with the $+$ or $-$ subscript indicating the spin taken by $\sigma_{v_l}$. 
 We shall assume without loss of generality that $J_{(l)}>0$ to simplify the presentation. By definition of the rates for $Y$ from \eqref{eq:Z(Y_s)}, it will suffice to check that, 
\[
\pi^{Y}( \mathcal{C}_+) \E_{\pi^{\mathcal{C}_+}} [Z_{v_l}(\sigma)+Z_{w_l}(\sigma)] = \pi^Y ( \mathcal{C}_-) \E_{\pi^{{\mathcal C}_-}} [Z_{v_l}(\sigma) + Z_{w_l}(\sigma)]
\]
 where $\pi^{\mathcal C}= \pi(\cdot \mid \sigma \in \mathcal C)$.
By expanding out both sides and cancelling terms this reduces to showing that, 
\[
\sum_{\sigma \in \mathcal{C}_+} (Z_{v_l}(\sigma)+Z_{w_l}(\sigma)) e^{\beta H(\sigma) } = \sum_{\tau \in \mathcal{C}_-} (Z_{v_l}(\tau)+Z_{w_l}(\tau)) e^{\beta H(\tau)} \, .
\]
Now note there is a bijection from $ \mathcal{C}_+$ to $ \mathcal{C}_-$ given by the map,
\[
\Phi_l(\sigma)_z =  
\begin{cases}
 \sigma_z &\text{if} \ z \not \in \{v_l,w_l\} \\
 -\sigma_z &\text{if} \ z \in \{v_l,w_l\} 
\end{cases} \, .
\]
In any $t_{a,\gamma}$ metastable well, $e_l$ is satisfied, and we can thus decompose the Hamiltonian $H$ as,
\[
H(\sigma) = \abs{J_{(l)}} + m_{v_l}'(\sigma) + m_{w_l}'(\sigma) + H_l(\sigma) \, ,
\]
where $m_{v}$ was defined in~\eqref{eq:def-of-Z-and-m}, $m_{v_l}'(\sigma) = m_{v_l}(\sigma) - |J_{(l)}|$, and $H_l$ is given by: 
\[
H_l(\sigma) = \sum_{v,w \not \in \{v_l,w_l\} } J_{vw} \sigma_v \sigma_w \, .
\]
 Then under the bijection $\Phi_l$, one has that 
\[
m_{v_l}'(\sigma) = - m_{v_l}' (\Phi_l(\sigma)) \, , \quad m_{w_l}'(\sigma) = - m_{w_l}' (\Phi_l(\sigma)) \, , \quad 
H_l(\sigma) = H_l(\Phi_l(\sigma)) \, .
\]
Therefore, writing,
\[
Z_{v_l}(\sigma) = \exp\left(-2\beta \abs{J_{(l)}} - 2\beta m_{v_l}'(\sigma) \right) \, , \quad Z_{w_l}(\sigma) = \exp(-2\beta \abs{J_{(l)}} - 2\beta m_{w_l}'(\sigma) ) \, , 
\]
we have that,
\begin{align*}
    \Big(Z_{v_l}(\sigma)& +Z_{w_l}(\sigma) \Big) e^{\beta H(\sigma) } \\
    &= \left(e^{-2\beta \abs{J_{(l)}} -2\beta m_{v_l}'(\sigma) } + e^{-2\beta \abs{J_{(l)}} -2\beta m_{w_l}'(\sigma)}\right)e^{\beta (\abs{J_{(l)}} + m_{v_l}'(\sigma) + m_{w_l}'(\sigma) + H_l(\sigma))} \, ,
\end{align*}
and similarly,
\begin{align*}
     \Big(Z_{v_l}(\Phi_l & (\sigma))+Z_{w_l}(\Phi_l(\sigma))\Big) e^{\beta H(\Phi_l(\sigma)) } 
     \\
     &= \left(e^{-2\beta \abs{J_{(l)}} +2\beta m_{v_l}'(\sigma) } + e^{-2\beta \abs{J_{(l)}} +2\beta m_{w_l}'(\sigma)}\right)e^{\beta (\abs{J_{(l)}} - m_{v_l}'(\sigma) - m_{w_l}'(\sigma) + H_l(\sigma))} \, .
\end{align*}
Expanding both identities above we see that: 
\[
 \Big(Z_{v_l}(\sigma)+Z_{w_l}(\sigma) \Big) e^{\beta H(\sigma) } =  \Big(Z_{v_l}(\Phi_l(\sigma))+Z_{w_l}(\Phi_l(\sigma))\Big) e^{\beta H(\Phi_l(\sigma))} \, .
\]
Since $\Phi_l$ is a bijection from $\mathcal{C}_+$ to $\mathcal{C}_-$, summing over $\sigma \in \mathcal{C}_+$ proves the result.
\end{proof}

The following is the relevant estimate for mixing of $Y$ when freezing its top $L$ coordinates, analogous to Theorem~\ref{thm:mixing-below-frozen-bond}. 

\begin{proposition} \label{Prop:Mixing-in-well-Y} 
    Let $0 \leq L < K$ be a fixed index, and fix an assignment $\varpi_{i} \in \{ \pm 1\}$ for $i \leq L$. Let $\mathcal W$ be the set of all configurations $y\in \{\pm 1\}^K$ having $y_i = \varpi_i$ for $i\le L$. 

    Let $\ol{Y}(t)$ be the process $Y$ restricted to $\mathcal W$ i.e., initialized with $Y(0)\in \mathcal W$ and rejecting all updates to coordinates $1 \leq i \leq L$, and denote its mixing time by $\tmix^{\mathcal W}$. There is $D(\beta,\alpha)>0$ so that 
    \[
 \tmix^{\mathcal W} \leq  \frac{1}{t_{a,\gamma}} \exp( 2\beta \abs{J_{(L+1)}} + DN^{\frac{1}{2}+\frac{1}{2\alpha} } ) \, . 
    \]
\end{proposition}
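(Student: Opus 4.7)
The plan is to compare the restricted chain $\ol{Y}$ on $\mathcal W$ to a tensor-product chain, echoing the strategy of Proposition~\ref{Prop:Largeish-bonds-spectral-gap} but made cleaner by the fact that the edges $e_l$ for $L<l\le K$ are vertex-disjoint, so the relevant coordinates of $Y$ correspond to genuinely distinct dynamical degrees of freedom (only weakly coupled through the rates $Z_l(Y)$). The first step is to derive tight two-sided bounds on $Z_l(Y)$ for $l>L$. Because any $Y\in\mathcal W$ has $e_l$ satisfied, one can decompose $m_{v_l}(\sigma)=|J_{(l)}|+m_{v_l}'(\sigma)$ as in the proof of Lemma~\ref{lem:Reversibility-of-Y}, and item~(2) of Lemma~\ref{lem:energy-drop-off-J} gives $|m_{v_l}'(\sigma)|\le CN^{1/(2\alpha)}$ uniformly in $\sigma$; combined with the analogous bound at $w_l$ this yields
\[
e^{-2\beta|J_{(l)}|-C'N^{1/(2\alpha)}} \;\le\; Z_l(Y)\;\le\; e^{-2\beta|J_{(l)}|+C'N^{1/(2\alpha)}}\,.
\]

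Next, I would introduce the auxiliary product chain $Y^{\otimes}$ on $\{\pm1\}^{\{L+1,\dots,K\}}$ in which coordinate $l$ flips independently at the deterministic rate $r_l:=t_{a,\gamma}e^{-2\beta|J_{(l)}|}$, resampling uniformly. This chain is reversible with respect to $\mathrm{Unif}(\mathcal W)$ and by tensorization of the spectral gap has $\gap_{Y^{\otimes}}=\min_{l>L}r_l=t_{a,\gamma}e^{-2\beta|J_{(L+1)}|}$. I will then transfer this to $\ol{Y}$ by the usual two comparisons. The Dirichlet-form comparison is immediate from the $Z_l(Y)$ bound and costs a factor $e^{O(N^{1/(2\alpha)})}$. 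The variance (stationary) comparison uses the bijection $\Phi_l$ from Lemma~\ref{lem:Reversibility-of-Y}: flipping any one coordinate of $Y\in\mathcal W$ changes $\log\sum_{\sigma\in\mathcal C_Y}e^{\beta H(\sigma)}$ by at most $O(N^{1/(2\alpha)})$ (again by Lemma~\ref{lem:energy-drop-off-J}), and iterating over at most $K-L=O(N^{1-\alpha\gamma})$ coordinates shows that any two wells in $\mathcal W$ have $\pi^Y$-weights within a factor $\exp(O(N^{1-\alpha\gamma+1/(2\alpha)}))\le\exp(O(N^{1/2+1/(2\alpha)}))$, where the last inequality uses $\gamma>1/(2\alpha)$.

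Combining these two comparisons yields
\[
\gap_{\ol{Y}}^{-1}\;\le\;\frac{1}{t_{a,\gamma}}\exp\bigl(2\beta|J_{(L+1)}|+D N^{1/2+1/(2\alpha)}\bigr)\,,
\]
and converting to a mixing-time bound via~\eqref{eq:tmix-trel-comparison} contributes only $\log(1/\pi_{\min})=O(N^{1/2+1/(2\alpha)})+O(K)$, which is absorbed into the $D N^{1/2+1/(2\alpha)}$ term. The main obstacle is keeping both exponents simultaneously tight: the rate comparison contributes $O(N^{1/(2\alpha)})$ per flipped coordinate but only the worst case enters the Dirichlet form, whereas the stationary-measure comparison genuinely accumulates over all $K-L$ coordinates, and it is exactly the hypothesis $\gamma\in(\gamma_0,1/\alpha)$ (which forces $1-\alpha\gamma+1/(2\alpha)\le 1/2+1/(2\alpha)$) that ensures the latter stays within the target exponent. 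Everything else is mechanical given the template already developed in Propositions~\ref{Prop:Spectral-gap-on-complement}--\ref{Prop:Largeish-bonds-spectral-gap}.
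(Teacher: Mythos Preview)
Your proposal is correct and follows the same overall template as the paper's proof: build a product chain on the coordinates $L+1,\dots,K$, bound its gap by tensorization, then transfer via Dirichlet-form and variance comparisons. The implementation differs in two places. First, your product chain has uniform stationary measure and deterministic rates $r_l=t_{a,\gamma}e^{-2\beta|J_{(l)}|}$, whereas the paper's product chain $\ol{Y}_m$ carries the frozen-field weights $\pi_m\propto e^{-\beta F_m}$ and correspondingly tilted rates; both choices give the same leading gap $t_{a,\gamma}e^{-2\beta|J_{(L+1)}|}$ up to $e^{O(N^{1/(2\alpha)})}$. Second, for the variance comparison you bound $\max_{x,y\in\mathcal W}\pi^Y(\mathcal C_y)/\pi^Y(\mathcal C_x)$ by iterating the bijection $\Phi_l$ over the $K-L$ free coordinates, each flip costing $O(N^{1/(2\alpha)})$ by Lemma~\ref{lem:energy-drop-off-J}(2); the paper instead decomposes $H=\sum_{i\le j}H_{ij}$ over three vertex groups and bounds the cross terms $H_{12},H_{13},H_{22}',H_{23}$ uniformly by $O(N^{1-\alpha\gamma+1/(2\alpha)})$, after which the remaining $H_{33}$ ratio cancels exactly. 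Both arguments land on the same exponent $N^{1-\alpha\gamma+1/(2\alpha)}\le N^{1/2+1/(2\alpha)}$, and your iterated-bijection route is arguably the more transparent of the two.
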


\begin{proof}
 Fix an $L$ and a $\mathcal W$ (associated to $\varpi_i$ for $i\le L$), and let $\varsigma_{v_i} = \varpi_i$ and $\varsigma_{w_i}$ taking the spin that satisfies $e_i$. Then for $L+1\le m\le K$, define 
 \[
F_m( {x_m }) :=  \sum_{ l \leq L } (J_{v_m v_l} \varsigma_{v_l} + J_{v_m w_l} \varsigma_{w_l}) x_m \qquad \text{for $x_m\in \{\pm 1\}$}  \, ,
 \]
which captures the effective field from the restriction to $\mathcal W$ on the site $v_m$. Now define a Markov chain $\ol{Y}_m$ with the following rule: when at state $x\in \{\pm 1\}$, follow an exponential clock of rate $t_{a,\gamma} \exp(-2\beta \abs{J_{(m)}} ) \exp( \beta F_m(x_m) )$, and when this clock rings uniformly resample $x_m$ from $\{\pm 1\}$.
Notice that this chain is stationary with respect to the measure assigning $x$ weight proportional to $\pi_m(x_m) \propto \exp({ -} \beta F_m(x_m) )$

Similarly to the proof of Proposition~\ref{Prop:Largeish-bonds-spectral-gap}, the method of canonical paths and an application of Lemma~\ref{lem:energy-drop-off-J} shows that the inverse spectral gap (denoted $\gap_{m}^{-1}$) is bounded above by:
\[
\gap_m^{-1} \leq \frac{1}{t_{a,\gamma}} \exp(2\beta \abs{J_{(m)}} + CN^{\frac{1}{2\alpha} }) \, .
\]
We now consider the product chain of these $\ol{Y}_m$ for $L< m < K$, and denote its spectral gap by $\gap_{\otimes}$. By tensorization of the spectral gap, it satisfies,
\[
\gap_{\otimes}^{-1} \leq \frac{1}{t_{a,\gamma} } \exp( 2\beta \abs{J_{(L+1)} } + CN^{\frac{1}{2\alpha}} ) \, .
\]
To complete the estimate on the spectral gap of $\ol{Y}$, which we denote by $\gap_{\mathcal{W}}$ we will compare its Dirichlet form to that of the product chain. For this let us write the rates of $\ol{Y}$ and the product chain as $\ol{c}(x,y)$ and $c_{\otimes}(x,y)$, respectively. Let $\pi_{\otimes}$ denote the product measure of the $\pi_m$, and $\pi^{\mathcal{W}}= \pi^{Y}(\cdot \mid \mathcal{W} )$.
As before we seek bound on 
\[
 \frac{c_{\otimes}(x,y) \pi_{\otimes}(x)}{\ol{c}(x,y) \pi^{\mathcal{W}}(x)}  \, .
\]
which is uniform over pairs $x,y \in \{\pm 1\}^{K-L}$ where $x\sim y$, i.e., they only differ in one coordinate.
Taking the maximum over adjacent pairs, the bound on the spectral gap that we get is given by: 
\begin{align}\label{eq:Y-gap-bound-above}
\gap_{\ol{Y}}^{-1} \leq \gap_{\otimes}^{-1} \max_{x \sim y}  \frac{c_{\otimes}(x,y) \pi_{\otimes}(x)}{\ol{c}(x,y) \pi^{\mathcal{W}}(x)} \max_{z} \frac{ \pi^{\mathcal W} (z)}{\pi_{\otimes}(z)} \, .
\end{align}
Then by item (2) of Lemma~\ref{lem:energy-drop-off-J}, the ratio of rates is uniformly bounded by 
\[
\frac{c_{\otimes}(x,y)}{\ol{c}(x,y)} \leq  \exp(2CN^{1/2\alpha} ) \, .
\]
Now we  bound the relative weights of the measures. In the bound of~\eqref{eq:Y-gap-bound-above}, the partition functions cancel, and it suffices to bound,
\begin{align}\label{eq:need-to-bound-max-Y}
 \max_{x,y} \frac{ e^{{ -} \beta \sum_{m=L+1}^{K} F_m(x_m) } }{\sum_{\sigma \in A_x} e^{\beta H(\sigma)}} \frac{ \sum_{\sigma \in A_y} e^{\beta H(\sigma)}}{e^{{ -} \beta \sum_{m=L+1}^{K} F_m(y_m)}} \, ,
\end{align}
where $A_x$  (respectively, $A_y$) are used to denote the collection of $\sigma$ in $\mathcal{W}$ such that $\sigma_{v_m}=x_m$. 
Now write $H(\sigma)$ as a sum of six terms according to three groups:
\[
H= H_{11} + H_{12} + H_{13} + H_{22} + H_{23} + H_{33} \, ,
\]
where the groups are defined as follows: 
\begin{itemize}
    \item group $1$ consists of the vertices belonging to edges $e_{1},...,e_{L}$, 
    \item group $2$ consists of the  vertices belonging to edges $e_{L+1},...,e_{K}$,
    \item group $3$ consists of all remaining spins.
\end{itemize}
With this notation, $H_{ij}$ corresponds to the total interaction between group $i$ and group $j$. When the top $L$ bonds are frozen, $H_{11}$ is independent of $\sigma$.{  We further note that for any $\sigma$ in a $t_{a,\gamma}$ metastable well one has that: 
\[
H_{22}(\sigma) = \sum_{m=L+1}^{K} \abs{J_{(m)}} + H_{22}'(\sigma) \, ,
\]
and by part (3) of Lemma ~\ref{lem:energy-drop-off-J} , we have uniform upper bounds: 
\begin{align} \label{eq:bounds-on-Hi}
\max_{\sigma \in \{ \pm 1 \}^{N} } \max(\abs{H_{12}(\sigma)},\abs{H_{13}(\sigma)},\abs{H_{22}'(\sigma)},\abs{H_{23}(\sigma)} ) \leq CN^{1-\alpha \gamma + \frac{1}{2\alpha} } \,.
\end{align} 
Therefore, the maximum in~\eqref{eq:need-to-bound-max-Y} satisfies, 
\[
 \max_{x,y} \frac{ e^{ \beta \sum_{m=L+1}^{K} F_m(x_m) } }{\sum_{\sigma \in A_x} e^{\beta H(\sigma)}} \frac{ \sum_{\sigma \in A_y} e^{\beta H(\sigma)}}{e^{\beta \sum_{m=L+1}^{K} F_m(y_m)}} 
 \leq \Big(\max_{x,y}  \frac{\sum_{\sigma \in A_x } e^{\beta H_{33}(\sigma)}}{\sum_{\sigma \in A_y } e^{\beta H_{33}(\sigma)}} \Big) e^{6\beta C N^{1-\alpha \gamma + \frac{1}{2\alpha} }} \, ,
\]
where the inequality follows from an application of \eqref{eq:bounds-on-Hi} and the cancellation of $H_{11}$ and $\sum_{m=K}^{L+1} \abs{J_{(m)} }$.
}
 The terms in the maximum depend only on the vertices in group $3$ and hence are equal. Consequently { we obtain the following bound on the spectral gap} 
\begin{align*}
\gap_{Y_L}^{-1}  & \leq  \frac{1}{t_{a,\gamma}} \exp(2\beta \abs{J_{(L+1)}} + CN^{\frac{1}{2\alpha}} + 6\beta C N^{1-\alpha \gamma + \frac{1}{2\alpha}} ) \\
& \leq \frac{1}{t_{a,\gamma}} \exp(2\beta \abs{J_{(L+1)}} + D N^{\frac{1}{2}+\frac{1}{2\alpha} }) \, .
\end{align*}
Finally, since the logarithm of the partition function is $O(N^{\frac{1}{\alpha}})$ , we may combine with~\eqref{eq:tmix-trel-comparison} to get the claimed mixing time bound. 
\end{proof}

\section{Metastability of  Glauber dynamics}\label{sec:proof-of-main-results}
We now turn to using our sharp mixing time within a well results from the previous sections to establish our main theorems. As in Section~\ref{sec:mixing-within-well}, we work on the intersection of all the $1-o(1)$ $\mathbb P_{\mathbf{J}}$-probability events of Section~\ref{sec:coupling-matrix-preliminaries}. Consequently, all results in this section are implicitly holding with $\mathbb P_{\mathbf{J}}$-probability $1-o(1)$. 

\subsection{Limit of the auto-correlation function}\label{subsec:autocorrelation-limit}
The aim of this subsection is to prove Theorem~\ref{thm:main-autocorrelation}.
For a given instance $X(t)$ running continuous-time Glauber dynamics, we define the (random) set $\sat(t)$ to be $\sat(X(t))$.

We begin with the following lemma which precisely characterizes the trajectories of $(X_i(t),X_j(t))$ when the bond value $J_{ij}$ is large in absolute value.

\begin{lemma} \label{lem:spin-structure} Fix $1 \leq L \leq K$ and let 
\begin{align}\label{eq:t-L}
t_L:= \exp(2\beta \abs{J_{(L)}} + CN^{1/2\alpha}) \, ,
\end{align}
for $C$ as in Lemma \ref{lem:energy-drop-off-J}. {Let $b>0$ and fix any $N^{2/\alpha} < T \leq e^{-bN^{\gamma-\xi}} t_L$}. 
Then, with probability at least {$1-e^{ - \frac{b}{2} N^{\gamma - \xi}}$}, the trajectory of the pairs $(X_{v_i}(t),X_{w_i}(t))$ for $1\le i \le L$ are described as follows:
\begin{itemize}
    \item If $(v_i,w_i) \in \sat(0)$ then $(X_{v_i}(t),X_{w_i}(t)) = (X_{v_i}(0),X_{w_i}(0))$ for all $t \in [0,T]$.
    \item If $(v_i,w_i) \not \in \sat(0)$ then the first attempted update at $v_i$ or $w_i$ is successful, and thereafter the pair remains constant until (at least) time $T$.
\end{itemize}
In particular, with probability at least {$1-e^{-\frac{b}{2}N^{\gamma-\xi} }$} one has $e_i \in \sat(T)$ for $1 \leq i \leq L$, and if $X(0) \sim \text{Unif}(\Sigma_N)$ then the law of $(X_{v_l}(T),X_{w_l}(T))_{l \leq L}$ has total-variation distance at most {$e^{- \frac{b}{2} N^{\gamma-\xi} }$}, to the uniform distribution on all assignments on these vertices that satisfy all edges $e_1,...,e_L$. 
\end{lemma}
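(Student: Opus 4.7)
The plan is to decouple the analysis across the $L$ vertex-disjoint edges by conditioning on the trajectory of $X$ on $[N]\setminus \bigcup_{i\le L}\{v_i,w_i\}$. Under this conditioning, the pairs $(X_{v_i}(t),X_{w_i}(t))_{i\le L}$ will evolve as independent four-state chains driven by a time-inhomogeneous external field, so the lemma reduces to a single-edge analysis followed by a union bound over $i$.

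The key quantitative input I would leverage is part~(2) of Lemma~\ref{lem:energy-drop-off-J}: the field at $v_i$ from all spins other than $w_i$ (and symmetrically at $w_i$) has magnitude at most $CN^{1/(2\alpha)}$. From this it follows that whenever the Glauber clock at $v_i$ or $w_i$ rings, the resampled spin takes the value making $e_i$ unsatisfied with probability at most
\[
\varepsilon_L \;\le\; \exp\bigl(-2\beta|J_{(L)}|+2\beta CN^{1/(2\alpha)}\bigr) \;\le\; t_L^{-1}\,,
\]
uniformly over the (conditioned) trajectory on the complement. Hence from a satisfying configuration of $(X_{v_i},X_{w_i})$, every subsequent update preserves $e_i \in \sat$ with probability at least $1-\varepsilon_L$; and from an unsatisfying configuration, any single update at either endpoint restores satisfaction with probability at least $1-\varepsilon_L$.

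Next I would use Poisson concentration: the number of updates at $\{v_i,w_i\}$ in $[0,T]$ is $\mathrm{Poisson}(2T)$, so by Chernoff it lies in $[1,4T]$ except on an event of probability $e^{-2T}+e^{-cT}$, both negligible compared to $e^{-bN^{\gamma-\xi}}$ since $T\ge N^{2/\alpha}$. Combining this with the two facts above, the trajectory description in both bullets fails at a fixed $i$ with probability at most $(4T+1)\varepsilon_L + O(e^{-cT}) \le 5\,e^{-bN^{\gamma-\xi}}$, where we used $T\le e^{-bN^{\gamma-\xi}}t_L$. A union bound over $i\le L\le K=O(N^{1-\alpha\gamma})$ absorbs the polynomial prefactor into the super-polynomial rate, leaving the two bullets (and thus $e_i\in\sat(T)$ for all $i\le L$) valid simultaneously off an event of probability at most $e^{-bN^{\gamma-\xi}/2}$.

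Finally, for the total-variation claim I would push the uniform initial measure forward through the description above. Since $X(0)\sim\text{Unif}(\Sigma_N)$ and the $e_i$'s are vertex-disjoint, the initial pairs $(X_{v_i}(0),X_{w_i}(0))$ are independent and uniform on $\{\pm 1\}^2$. On the good event, each initially satisfied pair is frozen, and each initially unsatisfying pair is mapped by the first update at $\{v_i,w_i\}$ to one of the two satisfying configurations; e.g.\ from $(+,-)$ with $J_{(i)}>0$ the first ring occurs at $v_i$ or $w_i$ with probability $\tfrac12$ each, and in either case the resampled spin lands on the satisfying side with probability $1-O(\varepsilon_L)$. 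Summing the four initial cases gives mass $\tfrac14+2\cdot\tfrac18=\tfrac12$ on each of the two satisfying configurations on $\{v_i,w_i\}$, up to $O(\varepsilon_L)$ error per edge; taking the product over $i\le L$ yields the uniform law on satisfying assignments with total TV error $O(L\varepsilon_L)+O(e^{-bN^{\gamma-\xi}/2})=O(e^{-bN^{\gamma-\xi}/2})$. The only real technical point, and where the hypothesis on $\gamma$ enters, is ensuring $CN^{1/(2\alpha)}\ll 2\beta|J_{(L)}|\gtrsim N^\gamma$, which is guaranteed by $\gamma>\gamma_0>1/(2\alpha)$; everything else is direct Poisson concentration and union bounding.
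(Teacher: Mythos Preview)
Your proposal is essentially correct and follows the same approach as the paper: a uniform bound on the single-update probability of unsatisfying $e_i$ via Lemma~\ref{lem:energy-drop-off-J}(2), Poisson control on the number of updates, a union bound over $i\le L$, and the symmetry argument for the total-variation claim. Two small points to tighten: first, the claim that the pairs $(X_{v_i},X_{w_i})$ become \emph{independent} four-state chains after conditioning on the complement of $\bigcup_{i\le L}\{v_i,w_i\}$ is not quite right, since the bonds $J_{v_iv_j}$ etc.\ still couple different pairs---but you never actually use independence, only the uniform-in-configuration bound $\varepsilon_L$, so the argument is unaffected. Second, the inequality $\varepsilon_L\le t_L^{-1}$ is off by a factor of $e^{O(N^{1/(2\alpha)})}$ (note $t_L^{-1}=\exp(-2\beta|J_{(L)}|-CN^{1/(2\alpha)})$ has a minus sign); this is harmless because $\gamma-\xi>1/(2\alpha)$, so the extra factor is absorbed when you pass to $(4T+1)\varepsilon_L\le e^{-bN^{\gamma-\xi}/2}$.
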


\begin{proof}
    Since we are working on the events of Lemma~\ref{lem:energy-drop-off-J} and Lemma~\ref{lem:gaps}, for any configuration $\eta$ having $e_i \in \sat(\eta)$ for some $i\in \{1,...,L\}$,  
    \begin{align}
\pi_{\beta,\J}( \sigma_{v_i}=-\eta_{v_i} | (\sigma_{z})_{z\ne v_i}= (\eta_{z})_{z \neq v_i} ) &\leq \exp(-2\beta \abs{J_{(i)}} + CN^{1/2\alpha} )\,, \label{eq:prob-unsatisfy-at-update-v}\\
\pi_{\beta,\J}( \sigma_{w_i}=-\eta_{w_i} | (\sigma_{z})_{z\ne w_i}= (\eta_{z})_{z \neq w_i} ) &\leq \exp(-2\beta \abs{J_{(i)}} + CN^{1/2\alpha} ) \label{eq:prob-unsatisfy-at-update-w} \, .
    \end{align}
Now let us assume that $e_i \in \sat(0)$. In this case, by the upper bounds above, we have by splitting the Poisson process of clock rings at vertices $v_i,w_i$ that:
\[
\sup_{s\in [0,T]} \mathbf{1}\{ (X_{v_i}(0),X_{w_i}(0))\neq (X_{v_i}(s),X_{w_i}(s) ) \} \leq \mathbf{1}\big\{ \text{Poiss}\big( 2T e^{-2\beta \abs{J_{(i)}} + CN^{1/2\alpha} } \big) \geq 1 \big\} \, .
\]
 Taking expectations on both sides implies by the bound on {$T\le e^{-bN^{\gamma-\xi}} t_L$}, that
\[
\p_{X(0)}\Big( \bigcup_{s\in [0,T]}  (X_{v_i}(s),X_{w_i}(s)) \neq (X_{v_i}(0),X_{w_i}(0)) \Big) \leq 2 T\exp(-2\beta \abs{J_{(i)}} + CN^{1/2\alpha}) \, , 
\]
which is at most  {$e^{-\frac{3b}{4}N^{\gamma-\xi}}$}, providing the bound in the case $e_i \in \sat(0)$.

In what follows, let $\tin_1(i) =\inf\{t\ge 0: e_i \in \sat(t)\}$. 
If $e_i \notin \sat(0)$, the complementary bound to~\eqref{eq:prob-unsatisfy-at-update-v}--\eqref{eq:prob-unsatisfy-at-update-w} implies that except with probability $e^{ - 2\beta |J_{(i)}|+ CN^{1/2\alpha}}$, the first time one of the clocks of $v_i,w_i$ rings is equal to $\tin_1$, i.e., the update is successful and $e_i \in \sat(\tin_1)$. 

From here the strong Markov property together with the argument for the case $e_i \in \sat(0)$, implies that $X_{v_i},X_{w_i}$ will thereafter remain unchanged until (at least) time $T$. By a coupon collecting bound,  with probability at least $1- N^2 e^{-N^{2/\alpha}}$,
by time $N^{2/\alpha}$ every site has had at least one clock ring. Combining the two cases, if $\mathcal{E}_L$ is given by: 
\begin{align} \label{eq:def-of-eL}
 \mathcal{E}_L := \bigcup_{1 \leq i \leq L}\Bigg(\{\tin_1(i) >N^{2/\alpha}\}\cup \bigcup_{s\in [\tin_1(i),T]}  \{(X_{v_i}(s),X_{w_i}(s)) \ne (X_{v_i}(\tin_1),X_{w_i}(\tin_1))\}\Bigg) \, ,
\end{align}
 then,
{\begin{align}\label{eq:E_L-bound}
\p(\mathcal{E}_L) \leq e^{-\frac{b}{2}N^{\gamma-\xi}} \, .
\end{align}}
To conclude, note that on the event $\mathcal{E}_L$, from a uniform initialization, every $e_i$ with $1 \leq i \leq L$ with $e_i \in \sat(X(0))$ is equally likely to take either of its satisfying assignments at time $T$, independent of all others. For those $e_i \not \in \sat(X(0))$, we have conditional on $X(0)$, independent of all other clocks, that the first clock ring at one of $\{v_i,w_i\}$ is equally likely to be on $v_i$ as $w_i$, and that determines which of the two permissible satisfying assignments it takes. Consequently the total-variation distance to uniformity on satisfying assignments is bounded above by $\p(\mathcal{E}_L)$. 
\end{proof}

The following obvious observation will also be useful to refer to repeatedly. 
\begin{remark} \label{rem:coupling-of-frozen}
   On the event that edges $e_i$ for $1 \leq i \leq L$ satisfy $e_i \in \sat(t)$ for all $t \in [s_1,s_2]$ we have that $X(t)$ is perfectly coupled to a restricted Glauber dynamics $\ol{X}$ that rejects any updates on vertices in $e_i$ for $1 \leq i \leq L$, run for time $s_2-s_1$ from initialization $X(s_1)$.
\end{remark}

The full metastability result of Theorem~\ref{thm:main1} will require a delicate understanding of a 2-state Markov chain approximating the transits between wells given by the flips of the $L+1$'th bond, with the top $L$ bonds frozen per Remark~\ref{rem:coupling-of-frozen}. Before getting to this analysis, let us prove the easier Theorem~\ref{thm:main-autocorrelation} using the results of Section~\ref{sec:mixing-within-well} together with Lemma~\ref{lem:spin-structure} and Remark \ref{rem:coupling-of-frozen}.

\begin{proof}[Proof of Theorem~\ref{thm:main-autocorrelation}]
Let $s>1$. Throughout we use the notation introduced in Lemma \ref{lem:spin-structure}. As in \eqref{eq:def-of-eL}, define an event $\mathcal{E}$ (see equation \eqref{eq:def-of-eL} with $L=K = \abs{E_{a,\gamma}}$) by, 
\[
\mathcal{E} := \bigcup_{1 \leq i \leq K}\Bigg(\{\tin_1(i) >N^{2/\alpha}\}\cup \bigcup_{s\in [\tin_1(i),T]}  \{(X_{v_i}(s),X_{w_i}(s)) \ne (X_{v_i}(\tin_1),X_{w_i}(\tin_1))\}\Bigg) \, ,
\]
where $T=st_{a,\gamma}$. By Lemma \ref{lem:gaps} we have that $t_{a,\gamma} \leq e^{- { {b} }N^{\gamma-\xi}}t_K$ {for some $ {b} >0$} (see~\eqref{eq:t-L} with $L=K$ for the definition of $t_K$), and hence { by} ~\eqref{eq:E_L-bound},
\[
\p( \mathcal{E} ) \leq e^{-\frac{{ b} }{2}N^{\gamma-\xi} } \, .
\]
For the remainder of the proof we shall work on the complement of this event. 
Also, by Lemma~\ref{lem:spin-structure}, 
\[
\TV{ \p((X_{v_1}(N^{2/\alpha}),....,X_{v_K}(N^{2/\alpha} )) \in \cdot )}{\text{Unif}(\{ \pm 1 \}^{K}) }   \le e^{ - \frac{{ b}}{2} N^{\gamma-\xi}}\, .
\]
  By Remark~\ref{rem:coupling-of-frozen}, on $\mathcal{E}^c$ we have that $(X(t))_{t\in [N^{2/\alpha},st_{a,\gamma}]}$ is equal to Glauber dynamics freezing the bonds in $E_{a,\gamma}$ for $t \in [N^{2/\alpha}, st_{a,\gamma}]$, corresponding to restricting to the well $\mathcal C$ dictated by $X_{v_1}(N^{2/\alpha}),...,X_{v_K}(N^{2/\alpha})$. Denoting this process by $\ol{X}(t)$, by Theorem~\ref{thm:mixing-below-frozen-bond} the mixing time of $\ol{X}$ is bounded above by: 
\[
\tmix^{\mathcal{C}} \leq  \exp(2\beta \abs{J_{(K+1)}} + DN^{\frac{1}{2}+\frac{1}{2\alpha}} ) \, ,
\]
and furthermore {  Lemma~\ref{lem:gaps} }implies  for every fixed $r>0$, that
{ 
\begin{align}\label{eq:tmix-over-t_agamma}
\frac{\tmix^{\mathcal C}}{rt_{a,\gamma}-N^{2/\alpha}} &= (1+o(1)) \frac{\tmix^{\mathcal{C}}}{r \exp(aN^{\gamma} )} \nonumber\\ 
&\leq \frac{1+o(1)}{r} \exp(2\beta \abs{J_{(K+1)}} + DN^{\frac{1}{2} + \frac{1}{2\alpha}} - aN^{\gamma} ) \nonumber \\
&\leq \frac{1+o(1)}{r} \exp( a(N^{\gamma} -N^{\gamma-\xi}) + DN^{\frac{1}{2} + \frac{1}{2\alpha}} - aN^{\gamma} ) \nonumber\\
&\leq \frac{2}{r} e^{-\frac{a}{2} N^{\gamma-\xi} }\,.  
\end{align}
}
Consequently, \eqref{eq:tv-distance-submultiplicativity} with $r=1$ implies that:
\[
\TV{\p_{\text{Unif}(\Sigma_N)}( X(t_{a,\gamma}) \in \cdot)}{ \p(\sigma^1 \in \cdot)} \leq 2^{-e^{\frac{{ a}}{4} N^{\gamma-\xi}} } + 2e^{ - \frac{{ b }}{2}N^{\gamma - \xi}} \, ,
\]
where the law of $\sigma^1$ is given by the following sampling scheme:
\begin{enumerate}
    \item Draw a uniform configuration in $\{ \pm 1 \}^{V_{a,\gamma}}$ conditional on $e\in \sat(\sigma^1)$ for all $e \in E_{a,\gamma}$. 
    \item Draw the remaining coordinates from $\pi_{\beta,\J}( \cdot \mid \sigma^1(V_{a,\gamma}))$.
\end{enumerate}

Next we show that the joint law of $X(t_{a,\gamma})$ and $X(st_{a,\gamma})$ is within total-variation distance $o(1)$ of $(\sigma^{1},\sigma^2)$ where $\sigma^2$ is a copy of $\sigma^1$ where step (1) uses the same draw, but step (2) uses an independent draw. 
The event $\mathcal E^c$ already implies that for any vertex $v$ in $V_{a,\gamma}$, $X_v(t_{a,\gamma}) = X_{v}(st_{a,\gamma})$. By the Markov property and a triangle inequality, it therefore suffices to control for any $\sigma^1$, the total-variation between $(\sigma^1,\sigma^2)$ and $(\sigma^1, \ol{X}((s-1)t_{a,\gamma}))$ where $\ol{X}$ is initialized from $\sigma_1$ (and has all vertices belonging to $E_{a,\gamma}$ frozen as before).
By the Markov property and~\eqref{eq:tmix-over-t_agamma} with $r=s-1$, 
\[
\TV{\p((\sigma^1, \ol{X}((s-1)t_{a,\gamma})) \in \cdot)}{\p((\sigma^1,\sigma^2) \in \cdot)} \leq 2(2^{-e^{\frac{{ a}}{4}N^{\gamma-\xi}}} + 2e^{-\frac{{ b}}{4}N^{\gamma-\xi}}) \, .
\]
Note that the law of $\frac{1}{N} \langle \sigma^1,\sigma^2 \rangle$ is exactly $q_{a,\gamma}^{(N)}$. Recalling the definition of~\eqref{eq:auto-correlation}, the above implies 
\[
\TV{ \p( C_N(t_{a,\gamma},st_{a,\gamma}) \in \cdot )  }{\p( q_{a,\gamma}^{(N)} \in \cdot) } \leq Ce^{-cN^{\theta} } \, ,
\]
for some absolute constants $C,c,\theta>0$, concluding the proof.
\end{proof}

\subsection{Approximating exit times of wells by exponential random variables}
Towards proving Theorem~\ref{thm:main1}, in this section we establish a key step that when considering the Glauber dynamics (as well as the process $Y$) we may approximate the escape times from metastable wells by exponential random variables with a fixed rate, drawn from the stationary barrier height of that well.

\begin{lemma} \label{lem:Comparison-of-rates}  

Let $1 \leq L \leq K$ be a fixed index, fix a satisfying assignment $\sigma_{v_i},\sigma_{w_i}$ for $i\le L$ and let $\mathcal{C}$ be the set of all configurations in $\{\pm 1\}^N$ taking those values on $\{v_i,w_i\}_{i\le L}$.  

Let $\ol{X}(t)$ denote Glauber dynamics restricted to $\mathcal C$, i.e., initialized in $\mathcal C$ and rejecting all updates to vertices $\{v_i,w_i\}_{i\le L}$.   Define the following functions on $\mathcal{C}$, 
   \begin{align} \label{eq:rates-of-XL}
\lambda_{v_L}(\sigma) = \frac{e^{-\beta m_{v_L}(\sigma)}}{ e^{\beta m_{{v_L}}(\sigma)} + e^{-\beta m_{v_L}(\sigma)}} \, , \qquad \text{and} \qquad \lambda_{w_L}(\sigma) = \frac{e^{-\beta m_{w_L}(\sigma)}}{e^{\beta m_{w_L}(\sigma) } + e^{-\beta m_{w_L}(\sigma)}}\, , 
   \end{align}
and let $\lambda_L = \lambda_{v_L} + \lambda_{w_L}$ (see \eqref{eq:def-of-Z-and-m} for the definition of $m_v$). Then for all $\ol{X}(0) \in \mathcal C$, and for all   $t\geq T_0:= \exp(2\beta \abs{J_{(L)} }  -\frac{{ a}}{100} N^{\gamma-\xi} )$, one has: 
  \begin{align} \label{eq:ratio-of-rates-bound-X} 
\Bigg| 1- \frac{\E [\lambda_L(\ol{X}(t))]}{\E_{\sigma \sim \pi^{\mathcal{C}}} [\lambda_L(\sigma)]}  \Bigg| \leq e^{-\frac{{ a} }{32} N^{\gamma-\xi} }  \, .
   \end{align} 
Furthermore, the time average of $\lambda_L(\overline{X}(t))$ is well concentrated, i.e, if $T_1 = \exp(2\beta \abs{J_{(L)} } + N^{\frac{2}{\alpha} })$, for any initialization $\ol{X}(0)\in \mathcal{C}$,  

\[
\p\Bigg( \sup_{t \in [T_0,T_1]} \Bigg| \frac{\frac{1}{t} \int_{0}^{t} \lambda_L(\ol{X}(s)) ds}{ \E_{\sigma \sim \pi^{\mathcal C} } [\lambda_L(\sigma)]} -1 \Bigg| \geq { 8e^{-\frac{a}{32}N^{\gamma-\xi}} }  \Bigg) \leq  2 \exp(- e^{\frac{{a}}{32}N^{\gamma-\xi} })  \, , 
\]

\end{lemma}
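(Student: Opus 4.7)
The plan is to exploit that $T_0$ is doubly-exponentially larger than the mixing time $\tmix^{\mathcal C}$ of $\overline X$. From Theorem~\ref{thm:mixing-below-frozen-bond} we have $\tmix^{\mathcal C} \le \exp(2\beta|J_{(L+1)}| + DN^{1/2+1/(2\alpha)})$, while the spacing results in Lemma~\ref{lem:gaps} and Lemma~\ref{lem:spacing-of-J} ensure $2\beta(|J_{(L)}|-|J_{(L+1)}|) - \tfrac{a}{100}N^{\gamma-\xi}$ is larger than $DN^{1/2+1/(2\alpha)}$ by a factor of order $N^{\gamma-\xi}$ (using that $\gamma-\xi > \tfrac{1}{2}+\tfrac{1}{2\alpha}$, which follows from $\gamma > \gamma_0$). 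Consequently
\[
\frac{T_0}{\tmix^{\mathcal C}} \;\geq\; \exp\bigl(c_0 N^{\gamma-\xi}\bigr)
\]
for some $c_0 = c_0(a,\beta) > 0$. Further, Lemma~\ref{lem:energy-drop-off-J}(2) gives $|m_{v_L}(\sigma)|,|m_{w_L}(\sigma)| = |J_{(L)}| + O(N^{1/(2\alpha)})$ uniformly on $\mathcal C$, so both $\|\lambda_L\|_\infty$ and $\mu := \E_{\pi^{\mathcal C}}[\lambda_L]$ are of order $e^{-2\beta|J_{(L)}|+O(N^{1/(2\alpha)})}$, and in particular $\|\lambda_L\|_\infty/\mu \le e^{BN^{1/(2\alpha)}}$ for some $B=B(\alpha,\beta)$.

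\emph{Part 1.} The submultiplicativity bound \eqref{eq:tv-distance-submultiplicativity} gives for $t \ge T_0$ and any $\overline X(0) \in \mathcal C$,
\[
d_{\tv}\bigl(\mathbb P(\overline X(t) \in \cdot),\pi^{\mathcal C}\bigr) \;\le\; 2 \cdot 2^{-\lfloor T_0/\tmix^{\mathcal C}\rfloor} \;\le\; 2^{-\exp(c_0 N^{\gamma-\xi})/2}.
\]
Hence $|\E[\lambda_L(\overline X(t))] - \mu| \le 2\|\lambda_L\|_\infty \cdot 2^{-\exp(c_0 N^{\gamma-\xi})/2}$, and dividing by $\mu$ the relative error is at most $4 e^{BN^{1/(2\alpha)}} \cdot 2^{-\exp(c_0 N^{\gamma-\xi})/2}$, which is much smaller than $e^{-aN^{\gamma-\xi}/32}$ for $N$ large.

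\emph{Part 2.} For concentration of the time average on $[T_0,T_1]$, I would combine a standard Chernoff-type inequality for reversible continuous-time Markov chains---of Lezaud/Gillman type, asserting that for $f$ with $\pi^{\mathcal C}$-mean $\mu_f$ and bounded $\|f\|_\infty$,
\[
\mathbb P\Bigl(\bigl|\tfrac{1}{t}\!\int_0^t\! f(\overline X_s)\,ds - \mu_f\bigr| > \varepsilon\Bigr) \;\le\; R_0 \exp\bigl(-c\, t \cdot \gap \cdot \varepsilon^2 / \|f\|_\infty^2\bigr)
\]
with $\gap \ge 1/\tmix^{\mathcal C}$ and $R_0 \le e^{O(N^{1/\alpha})}$ a Radon-Nikodym prefactor handled by absorbing one mixing time---with a discretization argument. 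Taking $f = \lambda_L$ and $\varepsilon = 4\mu e^{-aN^{\gamma-\xi}/32}$, the exponent becomes at least $(T_0/\tmix^{\mathcal C})\cdot e^{-aN^{\gamma-\xi}/16 - 2BN^{1/(2\alpha)}} \ge \exp(c_1 N^{\gamma-\xi})$ for some $c_1 > 0$, so for each fixed $t \in [T_0,T_1]$ the failure probability is $\le \exp(-\exp(c_1 N^{\gamma-\xi}))$. Discretizing $[T_0,T_1]$ at resolution $\Delta = T_0 \cdot e^{-BN^{1/(2\alpha)}-aN^{\gamma-\xi}/32}$ guarantees that between adjacent discretization times the time-average moves by at most $\|\lambda_L\|_\infty \Delta/T_0 \le \mu \cdot e^{-aN^{\gamma-\xi}/32}$, and the number of discretization points is $M \le T_1/\Delta = e^{O(N^{2/\alpha})}$, easily absorbed by the doubly-exponential per-$t$ tail in a union bound.

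The main obstacle is Part~2: applying the Markov-chain Chernoff bound with the correct initial-law dependence (most cleanly dealt with by noting that $\overline X(\tmix^{\mathcal C})$ is within $d_{\tv}$-distance $1/4$ of $\pi^{\mathcal C}$ and restarting the estimate there) and then verifying that all of the prefactors---the ratio $\|\lambda_L\|_\infty/\mu \le e^{O(N^{1/(2\alpha)})}$, the Radon-Nikodym prefactor $e^{O(N^{1/\alpha})}$, and the discretization count $M \le e^{O(N^{2/\alpha})}$---are comfortably dominated by the enormous ratio $T_0/\tmix^{\mathcal C} \ge e^{\Omega(N^{\gamma-\xi})}$ provided by the sharp spacing estimates on $\J$.
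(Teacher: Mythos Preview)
Your Part~1 matches the paper's argument exactly: bound $|\E[\lambda_L(\overline X(t))]-\mu|$ by the TV distance to stationarity (using $\lambda_L\le 1$), invoke Theorem~\ref{thm:mixing-below-frozen-bond} with submultiplicativity to make that distance doubly-exponentially small, and divide by the lower bound $\mu\ge e^{-O(N^{1/\alpha})}$.

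For Part~2 you take a genuinely different route. The paper does \emph{not} invoke a Lezaud/Gillman Chernoff bound; it builds the concentration by hand via a block decomposition. It chops $[0,t]$ into alternating short ``burn-in'' intervals of length $R_1=N^{2/\alpha}\tmix^{\mathcal C}$ and long ``sampling'' intervals of length $R_2=e^{-aN^{\gamma-\xi}/4}t$. Since $R_1\ge N^{2/\alpha}\tmix^{\mathcal C}$, after each burn-in the chain is within TV $2^{-N^{2/\alpha}}$ of $\pi^{\mathcal C}$, so the integrals over the $j_t\sim e^{aN^{\gamma-\xi}/4}$ sampling intervals can be replaced (at negligible TV cost) by i.i.d.\ stationary copies $\Lambda_i$ with mean $\mu$ and range $O(e^{-2\beta|J_{(L)}|+CN^{1/(2\alpha)}})$. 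Hoeffding on these $j_t$ copies gives the pointwise bound~\eqref{eq:single-time-ratio-bound}, and the supremum over $[T_0,T_1]$ is obtained by a union bound over a \emph{unit-spaced} mesh, using the elementary Lipschitz estimate $|F(t+1)-F(t)|\le 2\|\lambda_L\|_\infty/t$ on the running average.

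Your approach is sound and shorter, at the price of importing an external concentration inequality; the paper's is self-contained and in effect rederives the special case of Lezaud's bound it needs. One small caution: the prefactor $R_0$ in Lezaud-type bounds is an $L^2$ (chi-squared) quantity, not a TV one, so ``absorbing one mixing time and restarting'' does not by itself bound it---after $\tmix^{\mathcal C}$ the law is TV-close to $\pi^{\mathcal C}$ but may still have large $\|d\nu/d\pi^{\mathcal C}\|_2$. The clean fix is what you also wrote first: simply take $R_0\le 1/\pi^{\mathcal C}_{\min}=e^{O(N^{1/\alpha})}$ from the worst-case point mass, which is swallowed by the doubly-exponential tail $\exp(-e^{\Omega(N^{\gamma-\xi})})$.
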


\begin{proof} To simplify notation we write $\lambda = \lambda_L$ throughout the proof. 
 For the first statement, note that as $\lambda \leq 1$ we have 
\begin{align*}
\abs{ \E_{\sigma \sim \pi^{\mathcal{C} }} [\lambda(\sigma)] - \E [\lambda(\ol{X}(t))] } \leq \TV{\p( \ol{X}(t) \in \cdot)}{\pi^{\mathcal{C}}} \, .
\end{align*}
By stationarity of $\pi^{\mathcal C}$ for $\ol{X}$, Theorem \ref{thm:mixing-below-frozen-bond}, and ~\eqref{eq:tv-distance-submultiplicativity}, if $t\ge T_0$, the TV distance is bounded by,
\[
\TV{\p( \ol{X}(t) \in \cdot)}{\pi^{\mathcal{C}}} \leq 2^{-N^{2/\alpha} } \, .
\]
Since $|J_{(L)}| = O(N^{1/\alpha})$ for all $L\ge 1$, we have $\mathbb E_{\sigma \sim \pi^{\mathcal C}}[\lambda(\sigma)] \ge e^{ - O(N^{1/\alpha})}$ and therefore the above is at most 
$e^{-\frac{{ a}}{32} N^{\gamma-\xi}} \E_{\sigma \sim \pi^{\mathcal{C}}} [\lambda(\sigma)]$ 
which proves the first claim.

For the concentration, for any $t\in [T_0,T_1]$, we begin by defining $R_1 =o(R_2)$ as,  $$R_1 = N^{2/\alpha} \tmix\,, \qquad \text{and} \qquad R_2 = e^{- \frac{{ a}}{4} N^{\gamma-\xi} } t\,. $$
where $\tmix^{\mathcal{C}}$ denotes the mixing time of the restricted chain $\ol{X}$.
Let us subdivide the interval $[0,t]$ into $I_1,I_2,...$ by 
\begin{align*}
    I_{2k+1} & = [k (R_1+R_2), (k+1)R_1 + kR_2]\,, \qquad k\ge 0\,,\\ 
    I_{2k} & = [kR_1 + (k-1)R_2, k(R_1+ R_2)]\,, \qquad k \ge 1\,.
\end{align*}
Note that the required number of intervals is 
\begin{align}\label{eq:j_t-bound}
j_t:= { \frac{1}{2}} \lceil \tfrac{t}{R_1+R_2} \rceil  = \frac{1}{2} e^{\frac{{a}}{4} N^{\gamma - \xi}}(1+o(1))\, .
\end{align}

Explicitly the $o(1)$ term is at most  $e^{-\frac{{ a}}{2}N^{\gamma-\xi}}$ as: 
\[
\frac{t}{R_1+R_2} = e^{\frac{{ a}}{4} N^{\gamma-\xi} } \left( 1 -  \frac{R_1/R_2}{1+R_1/R_2} \right) \, ,
\]

and by definition of $R_1,R_2$ and an application of Theorem $\ref{thm:mixing-below-frozen-bond}$ and Lemma \ref{lem:gaps} one has:

\begin{align} \label{eq:o(1)-bound-Prop-int-average}
\frac{R_1}{R_2} =  \frac{N^{2/\alpha} e^{\frac{{a}}{4}N^{\gamma-\xi}} \tmix}{t} \leq  \frac{N^{2/\alpha} e^{\frac{1}{4} N^{\gamma-\xi}} \exp( 2\beta\abs{J_{(L+1)}} +DN^{\frac{1}{2}+\frac{1}{2\alpha}}  ) }{\exp(2\beta \abs{J_{(L)}} - \frac{{ a}}{100} N^{\gamma-\xi})} \leq  e^{-\frac{{ a}}{2} N^{\gamma-\xi} } \, .
\end{align}

To get the concentration estimate, we wish to make the following approximation:
\begin{align*}
\frac{1}{t} \int_{0}^{t}  \lambda(\ol{X}(s)) ds  \approx \frac{1}{j_t} \sum_{i=1}^{j_t} \Lambda_i \, , 
\end{align*}
for an i.i.d.\ sequence $\Lambda_i$ with mean $\mathbb E[\Lambda_i] = \mathbb E_{\sigma \sim \pi^{\mathcal C}}[\lambda(\sigma)]$.

By Theorem~\ref{thm:mixing-below-frozen-bond} we have that for any starting configuration $\ol{X}(0)$, by time $R_1$ the total-variation distance of $\ol{X}(t)$ to $\pi^{\mathcal C}$ is bounded by:
\[
\sup_{t \in I_2}\TV{\mathbb P(\ol{X}(t)\in \cdot)}{\pi^{\mathcal C}} \leq 2^{-N^{2/\alpha}} \, ,
\]
For general $k\ge 1$, by conditioning on the value of $\ol{X}(t)$ at time {$kR_1 + (k-1)R_2$}, and treating { $\ol{X}(kR_1+(k-1)R_2)$} as a fresh start, the same argument implies that 
\begin{align}\label{eq:rates-approximation-tv-bound}
\sup_{(t_i)_{i\le k} \in \otimes_{i\le k} I_{2i} } \TV{ \p(\ol{X}(t_1)\in \cdot,...,\ol{X}(t_k)\in \cdot) }{(\pi^{\mathcal C})^{\otimes k}} \leq k 2^{-N^{2/\alpha}} \, ,
\end{align}
Using the Markov property, this means that up to error of $j_t 2^{-N^{2/\alpha}}$,  on each of the intervals $I_{2i}$ we may replace $(\ol{X}(t))_{t\in I_{2i}}$ with the stationary chain $(\widetilde X^{(i)}(t))_{t\in [0,R_2]}$ where $\widetilde X^{(i)}$ are independent copies of $\ol{X}$, each independently initialized from $\widetilde X^{(i)}(0)\sim \pi^{\mathcal C}$. Define for each one, 
{
\begin{align*}
    \Lambda_i := \frac{1}{R_2} \int_{0}^{R_2} \lambda(\widetilde X^{(i)}(s)) ds\,,
\end{align*}
}
and note by stationarity that $\mathbb E[\Lambda_i] = \mathbb E_{\sigma \sim \pi^{\mathcal C}}[\lambda(\sigma)]$.
By the above coupling, we then have over intervals $I_{2i}$ that: 
\begin{align*}
    \Big| \frac{1}{t} \int_{I_{2i}} \lambda (\ol{X}(s))ds - \frac{1}{j_t} \Lambda_i \Big| &\leq \Big| \frac{1}{j_t} \frac{1}{R_2} \int_{I_{2i}} \lambda(\ol{X}(s))ds - \frac{1}{j_t} \Lambda_i \Big| + \frac{e^{-\frac{{a}}{2} N^{\gamma-\xi} }}{ { R_2} } \int_{I_{2i}} \lambda(\ol{X}(s))ds \\ 
    &\leq \frac{1}{j_t} 2^{-N^{2/\alpha}} + 2e^{-\frac{{a}}{2} N^{\gamma-\xi} } \exp(-2\beta \abs{J_{(L)}} + CN^{\frac{1}{2\alpha} }) \, ,
\end{align*}
where the last line follows from the total-variation bound~\eqref{eq:rates-approximation-tv-bound}, and the bounds
\begin{align} \label{eq:bounds-on-lambda}
\exp(-2\beta \abs{J_{(L)}} -CN^{\frac{1}{2\alpha} }) &\leq \lambda(\sigma) \leq \exp(-2\beta \abs{J_{(L)}} + CN^{\frac{1}{2\alpha} }) \qquad \text{for all $\sigma \in \mathcal C$, and} 
\\
{
 \Big| \frac{1}{t} - \frac{1}{j_t R_2} \Big| } &{  = \frac{ \abs{j_t R_2 -t}}{tj_tR_2} \leq \frac{1}{t j_t} = \frac{1+o(1)}{R_2 e^{\frac{a}{2} N^{\gamma-\xi} } } } \nonumber \, ,  
\end{align}
{ 
the first of which can be read off from Lemma ~\ref{lem:energy-drop-off-J} up to the change of constant $C$, and the second of which follows from~\eqref{eq:j_t-bound}.}

Combining the above, and using the bound on $j_t$ of~\eqref{eq:j_t-bound}, by a triangle inequality, 

\begin{align}
\Big| \frac{1}{t} \int_{0}^{t} & \lambda(\ol{X}(s))ds  - \frac{1}{j_t}  \sum_{i=1}^{j_t} \Lambda_{i} \Big|  \nonumber \\  
&\leq 2^{-N^{2/\alpha}} + 2e^{-\frac{{ a}}{4} N^{\gamma-\xi} } \exp(-2\beta \abs{J_{(L)}} 
+ CN^{\frac{1}{2\alpha} }) + \frac{1}{t} \sum_{k= 0}^{j_t} \int_{I_{2k+1}} \lambda(\ol{X}(s)) ds \label{eq:integral-concentration-about-iid-sum}
\end{align}

For the odd intervals, we apply Theorem \ref{thm:mixing-below-frozen-bond} along with \eqref{eq:j_t-bound}--\eqref{eq:o(1)-bound-Prop-int-average} to get that, 
\[
\frac{j_t R_1}{t} = \frac{j_t N^{2/\alpha} \tmix}{t} \leq e^{-\frac{{ a} }{6} N^{\gamma-\xi}}  \,.
\]

Combining with~\eqref{eq:bounds-on-lambda}, we have 
\begin{align*}
    \Big| \frac{1}{t} \int_{0}^{t} \lambda(\ol{X}(s))ds & - \frac{1}{j_t}  \sum_{i=1}^{j_t} \Lambda_{i} \Big| \le e^{-\frac{{a}}{8} N^{\gamma-\xi}} { \max_{\sigma \in \mathcal{C}} [\lambda(\sigma) ]}\,.
\end{align*}

 We have reduced things to the concentration of the i.i.d.\ average $ j_t^{-1} \sum_{i=1}^{j_t} \Lambda_i$. By Hoeffding's inequality, the bound $0 \leq \Lambda_i \leq 2\exp(- 2\beta \abs{J_{(L)}} + CN^{1/2\alpha})$ per~\eqref{eq:bounds-on-lambda}, and $1/2\alpha + 1/2 < \gamma-\xi$,
\[
\p \bigg( \Big|\frac{1}{j_t}  \sum_{i=1}^{j_t} \Lambda_{i} - \E_{\sigma \sim \pi^{\mathcal C} } [\lambda(\sigma)] \Big| \geq r  \bigg) \leq 2 \exp\left( -2 r^{2} e^{ 4\beta \abs{J_{(L)}} + \frac{{a}}{8} N^{\gamma-\xi} }  \right) \, .
\]

Combined with the bound of~\eqref{eq:integral-concentration-about-iid-sum}, we obtain for each $t\ge T_0$, 

\begin{align} \label{eq:single-time-ratio-bound} 
\p \bigg( \Big|\frac{ \frac{1}{t} \int_{0}^{t} \lambda(\ol{X}(s)) ds}{\E_{\sigma \sim \pi^{\mathcal C}}[\lambda(\sigma)] } -1 \Big| \geq e^{-\frac{{a}}{32} N^{\gamma-\xi} }  \bigg) \leq 2 \exp(-2 e^{ \frac{{ a} }{32} N^{\gamma-\xi} } ) \, ,
\end{align}

We now upgrade the estimate above to a bound on the supremum over $t\in [T_0,T_1]$. First, if we write $s=t+dt$, then one has deterministically,
\begin{align*}
    \Big|\frac{ \frac{1}{s} \int_{0}^{s} \lambda(\ol{X}(u)) du}{\E_{\sigma \sim \pi^{\mathcal C}}[\lambda(\sigma)] } -1 \Big| &\leq  \Big|\frac{ \frac{1}{t} \int_{0}^{t} \lambda(\ol{X}(s)) ds}{\E_{\sigma \sim \pi^{\mathcal C}}[\lambda(\sigma)] } -1 \Big| + 2 \frac{dt}{t+dt} \frac{\max_{\sigma \in \mathcal{C} } \lambda( \sigma)}{\E_{\sigma \sim \pi^{\mathcal{C}}} [\lambda(\sigma)] } \, .
\end{align*}
By applying Lemma~\ref{lem:energy-drop-off-J} to bound both numerator and denominator by $\exp( - 2\beta |J_{(L)}| \pm C N^{1/2\alpha})$ one has, 
\[
\frac{\max_{\sigma \in \mathcal{C} } \lambda( \sigma)}{\E_{\sigma \sim \pi^{\mathcal{C}}} [\lambda(\sigma)] } \leq \exp(2CN^{\frac{1}{2\alpha} }) \, ,
\]
and so taking $dt = 1 $, we see that on the complement of the event in~\eqref{eq:single-time-ratio-bound} for some $t$, then

\[
\sup_{s \in [t,t+1] }  \Big|\frac{ \frac{1}{s} \int_{0}^{s} \lambda(\ol{X}(u)) du}{\E_{\sigma \sim \pi^{\mathcal C}}[\lambda(\sigma)] } -1 \Big| \leq e^{-\frac{{ a}}{32}N^{\gamma - \xi}} + e^{ - 2\beta |J_{(L)}| + \frac{{ a}}{100}N^{\gamma - \xi} + 3CN^{1/2\alpha}} \le  2 e^{- \frac{{ a}}{32} N^{\gamma-\xi} } \, ,
\]
{ where} we have used $N^{\gamma-\xi}$ and $N^{\frac{1}{2\alpha}}$ are both $o(\abs{J_{(L)} })$.

Thus for any integer $m >0$ we have:
\begin{align*}
\p &\Bigg( 
  \sup_{s \in [T_0,T_0+m dt] } \Bigg|\frac{ \frac{1}{s} \int_{0}^{s} \lambda(\ol{X}(u)) du}{\E_{\sigma \sim \pi^{\mathcal C}}[\lambda(\sigma)] } -1 \Bigg| \geq {8 e^{- \frac{a}{32} N^{\gamma-\xi}} }
  \Bigg) \\
   & \leq
  \sum_{i=0}^{m} \p \left( \Bigg|\frac{ \frac{1}{t_i} \int_{0}^{t_i} \lambda(\ol{X}(u)) du}{\E_{\sigma \sim \pi^{\mathcal C}}[\lambda(\sigma)] } -1 \Bigg| \geq { 4 e^{- \frac{a}{32} N^{\gamma-\xi}} } \right)
\leq 2m \exp(-2e^{\frac{{ a} }{32}N^{\gamma-\xi} }) \, ,
\end{align*}

where $t_i= T_0+ i$. To complete the proof, take $m= \exp(2\beta \abs{J_{(L)}} + N^{2/\alpha})$ to get: 
\[
\p\left( \sup_{t \in [T_0,T_1]} \Bigg|\frac{ \frac{1}{t} \int_{0}^{t} \lambda(\ol{X}(u)) du}{\E_{\sigma \sim \pi^{\mathcal C}}[\lambda(\sigma)] } -1 \Bigg| \geq { 8 e^{- \frac{a}{32} N^{\gamma-\xi} } } \right) \leq 2 \exp(-e^{\frac{{ a}}{32} N^{\gamma-\xi} } ) \, ,
\]

proving the main claim.
\end{proof}

We now state the analogue of Lemma \ref{lem:Comparison-of-rates} for the process $Y$.

\begin{lemma} \label{lem:comparison-of-rates-Y} Let $1 \leq L \leq K$ be a fixed index, fix an assignment $y_1,...,y_L$, and 
let $\mathcal{W}$ denote the set of all configurations in $\{ \pm 1 \}^{N}$ taking those values on $\{Y_i \}_{i \leq L }$.

Let $\ol{Y}(t)$ denote the process $Y$ restricted to $\mathcal{W}$, i.e., $\ol{Y}$ is initialized in $\mathcal{W}$, and rejects all updates to vertices $1,...,L$. Define $\ol{Z}_L$ conditional on $\mathcal{W}$ by: 
\[
\ol{Z}_{L} = \pi_{\beta,\mathbf{J}} \bigg[ Z_{v_L}(\sigma) + Z_{w_L}(\sigma) \Big| E_{a,\gamma} \subset \mathsf{Sat}(\sigma)\,,\, \sigma_{v_i} = y_i \ 1 \leq i \leq L  \bigg]\,,
\]
where $Z_v(\sigma)$ is defined in \eqref{eq:def-of-Z-and-m}. Then uniformly in the choice of $\mathcal{W}$, and $s$ satisfying $st_{a,\gamma} > T_0:= \exp(2\beta \abs{J_{(L)}}  - \frac{{ a}}{100} N^{\gamma-\xi} )$ one has that,
 \[
\Big| 1- \frac{\E [Z_L(\ol{Y}(s))]}{\ol{Z}_L } \bigg| \leq e^{-\frac{{ a} }{32} N^{\gamma-\xi} } \, .
 \]
 
 Furthermore, for any initialization in $\mathcal{W}$  
\[
\p \bigg( \sup_{s: st_{a,\gamma} \in [T_0,T_1] } \Big|\frac{ \frac{1}{s} \int_0^s Z_L(\ol{Y}(u)) du}{\ol{Z}_L} -1 \Big| \geq { 8} e^{-\frac{{ a}}{32} N^{\gamma-\xi}} \bigg) \leq  2\exp( - e^{\frac{{ a }}{32} N^{\gamma-\xi} } )\, , 
\]

where $T_1 = \exp(2\beta \abs{J_{(L)} } + N^{\frac{2}{\alpha} } )$.
\end{lemma}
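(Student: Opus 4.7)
The plan is to mirror the proof of Lemma~\ref{lem:Comparison-of-rates}, with the mixing time bound from Proposition~\ref{Prop:Mixing-in-well-Y} and the reversibility of $Y$ from Lemma~\ref{lem:Reversibility-of-Y} replacing the corresponding ingredients for $\ol X$. First, the restriction of $\pi^Y$ to $\mathcal W$ (call it $\pi^{\mathcal W}$) is the stationary distribution of $\ol Y$, and it coincides with the pushforward of $\pi_{\beta,\J}(\cdot \mid E_{a,\gamma}\subset \sat(\sigma),\,\sigma_{v_i}=y_i\text{ for }i\le L)$ under the map $\sigma\mapsto \mathsf S(\sigma)$. Combined with the tower rule applied to the defining formula~\eqref{eq:Z(Y_s)} for $Z_L(Y)$, this yields the key identity $\E_{\pi^{\mathcal W}}[Z_L]=\ol Z_L$. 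Moreover, Lemma~\ref{lem:energy-drop-off-J} gives the uniform sandwich $Z_L(Y)\in [2e^{-2\beta|J_{(L)}|-CN^{1/2\alpha}},\,2e^{-2\beta|J_{(L)}|+CN^{1/2\alpha}}]$ over every $Y\in \mathcal W$, so in particular the same sandwich holds for $\ol Z_L$.

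For the first estimate, $|\ol Z_L-\E[Z_L(\ol Y(s))]|\le (\max_Y Z_L(Y))\cdot \TV{\p(\ol Y(s)\in\cdot)}{\pi^{\mathcal W}}$, and the submultiplicativity bound~\eqref{eq:tv-distance-submultiplicativity} gives $\TV{\p(\ol Y(s)\in\cdot)}{\pi^{\mathcal W}}\le 2^{-\lfloor s/\tmix^{\mathcal W}\rfloor}$. By Proposition~\ref{Prop:Mixing-in-well-Y} and Lemma~\ref{lem:spacing-of-J}, one has $\tmix^{\mathcal W}\le t_{a,\gamma}^{-1}\exp(2\beta|J_{(L+1)}|+DN^{1/2+1/2\alpha})$ and $|J_{(L)}|-|J_{(L+1)}|\ge \tfrac{a}{2\beta}N^{\gamma-\xi}$, so the assumption $st_{a,\gamma}\ge T_0$ implies $s/\tmix^{\mathcal W}\ge \exp(cN^{\gamma-\xi})$ using $\gamma-\xi>\tfrac{1}{2}+\tfrac{1}{2\alpha}$. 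Dividing by $\ol Z_L$, the $e^{CN^{1/2\alpha}}$ prefactor from the uniform sandwich is easily absorbed, yielding the claimed bound.

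For the concentration, subdivide $[0,s]$ into alternating pieces $I_{2k+1}$ of length $R_1:=N^{2/\alpha}\tmix^{\mathcal W}$ and $I_{2k}$ of length $R_2:=e^{-\frac{a}{4}N^{\gamma-\xi}}s$, exactly as in Lemma~\ref{lem:Comparison-of-rates}. On each $I_{2k}$ the mixing estimate couples $\ol Y$ to an independent stationary copy $\widetilde Y^{(k)}$ up to coupling error $2^{-N^{2/\alpha}}$; the block averages $\Lambda_k := R_2^{-1}\int_0^{R_2} Z_L(\widetilde Y^{(k)}(u))\,du$ are then i.i.d.\ with mean $\ol Z_L$ and are bounded by $2e^{-2\beta|J_{(L)}|+CN^{1/2\alpha}}$, so Hoeffding applied to $j_s^{-1}\sum_{k=1}^{j_s}\Lambda_k$ with $j_s=\Theta(e^{\frac{a}{4}N^{\gamma-\xi}})$ gives the desired tail bound. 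The contributions of the odd (mixing) intervals amount to an $O(e^{-\frac{a}{6}N^{\gamma-\xi}})$ deterministic error via the uniform sandwich, and a pointwise-in-$s$ estimate is lifted to the supremum by a $1/t_{a,\gamma}$-mesh discretization together with a union bound, again paralleling Lemma~\ref{lem:Comparison-of-rates}. The main obstacle (shared with Lemma~\ref{lem:Comparison-of-rates}) is ensuring $R_1\ll R_2$, which relies on the gap $|J_{(L)}|-|J_{(L+1)}|\gtrsim N^{\gamma-\xi}$ from Lemma~\ref{lem:spacing-of-J} dominating the $N^{1/2+1/2\alpha}$ slack in the mixing-time bound---this is precisely why the condition $\gamma-\xi>\tfrac{1}{2}+\tfrac{1}{2\alpha}$ (i.e., $\gamma>\gamma_0$) is imposed.
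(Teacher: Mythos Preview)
The proposal is correct and takes essentially the same approach as the paper, which explicitly omits the proof and states it is identical to that of Lemma~\ref{lem:Comparison-of-rates} with Proposition~\ref{Prop:Mixing-in-well-Y} replacing Theorem~\ref{thm:mixing-below-frozen-bond} and Lemma~\ref{lem:Reversibility-of-Y} identifying $\pi^{\mathcal W}$ as stationary for $\ol Y$. Your additional remark that $\E_{\pi^{\mathcal W}}[Z_L]=\ol Z_L$ follows from the tower property applied to~\eqref{eq:Z(Y_s)} is a useful clarification the paper leaves implicit, and your $1/t_{a,\gamma}$-mesh in $s$ correctly corresponds to the $dt=1$ mesh used in Lemma~\ref{lem:Comparison-of-rates}.
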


We omit the proof of Lemma \ref{lem:comparison-of-rates-Y}, because it is identical to that of Lemma \ref{lem:Comparison-of-rates}, using Proposition~\ref{Prop:Mixing-in-well-Y} in place of Theorem~\ref{thm:mixing-below-frozen-bond}, and  Lemma \ref{lem:Reversibility-of-Y} for identification of $\pi^{\mathcal W}$ as stationary for $\ol{Y}$.

\subsection{Coupling the skeleton process}
In this subsection, we will establish our main theorem, Theorem~\ref{thm:main1}. 
We begin by giving simpler characterizations of the laws of the skeleton of the Markov chain $\mathsf{S}(t)$ and of the claimed limiting process $Y$ at large times.

{
In what follows, let $\delta_N = e^{ -2D N^{\frac{1}{2\alpha}+\frac{1}{2}}}$, where $D(\alpha,\beta)$ is as in Theorem \ref{thm:mixing-below-frozen-bond}.  
For any $s=s_N$, define a function $\label{eq:definition-of-L} L: \real_+ \to \{1,...,K\}$ by
\begin{align*}
L(s_N) :=  
\begin{cases}
    K &\text{if} \ s t_{a,\gamma} < \delta_N^{-1} \exp(2\beta \abs{J_{(K)}}  ),  \\ 
    L & \text{if} \ st_{a,\gamma} \in [\delta_N^{-1} \exp(2\beta \abs{J_{(L+1)}} ), \delta_N^{-1} \exp(2\beta \abs{J_{(L)} })] \ \text{and} \ 1 \leq L \leq K-1, \\ 
    0 &\text{if} \ st_{a,\gamma} > \delta_N^{-1} \exp(2\beta \abs{J_{(1)} })\,.
\end{cases} 
\end{align*}
Lemma~\ref{lem:spacing-of-J} guarantees that the intervals above are disjoint and therefore $L$ is well defined. 
Heuristically, $L$ is such that all edges $e_i$ for $1 \leq i \le L-1$ will stay satisfied for timescales $s_N t_{a,\gamma}$ with probability at least $1-e^{-N^{\theta}}$ for $\theta>0$. 

\begin{remark} \label{rem:Frozen-above-L(s)-below-s}
For any $s= s_N$, let $L=L(s)>1$. We have that 
\[
st_{a,\gamma} \leq \exp( 2\beta \abs{J_{(L)}} + 2DN^{\frac{1}{2}+\frac{1}{2\alpha} } ) \, ,
\]
and hence Lemma \ref{lem:spacing-of-J} implies
\begin{align} \label{eq:s-L(s)-comparison} 
st_{a,\gamma} \leq \exp( 2\beta \abs{J_{(L-1)}} - \frac{\beta}{2} N^{\gamma-\xi} ) \, .
\end{align}
Thus, by Lemma~\ref{lem:spin-structure}, we have with probability $1-e^{-cN^{\gamma-\xi}}$ edges $e_1,...,e_{L-1}$ will become satisfied at their first clock rings and remain satisfied through time $s t_{a,\gamma}$.
\end{remark}
}

We now define a 2-state Markov chain $M$, which will approximately track the switches between the two possible satisfying assignments on the $L$'th bond. Fix a well $\mathcal{C}$ defined by a satisfying assignment to the edges $e_1,...e_{L-1}$, and let $\mathcal{C}_+,\mathcal{C}_-$ denote the subsets of $\mathcal{C}$ such that $e_L \in \sat(\sigma)$ for every $\sigma \in \mathcal{C}_{\pm}$, obtained by fixing the value of the spin at vertex $v_L$ to be $\pm 1$. Define the quantities, 
\begin{align} \label{eq:rates-of-M}
\ol{\lambda}_+ &= \E_{\sigma \sim \pi^{\mathcal{C}_+}} [\lambda_L(\sigma)] \, ,
 \\
\ol{\lambda}_- & = \E_{\sigma \sim \pi^{\mathcal{C}_-} } [\lambda_L(\sigma)] \, ,
\end{align}
where $\lambda_L$ was defined in~\eqref{eq:rates-of-XL}. 
The chain $M(t)$ takes values in $\{ \pm 1 \}$; if it is in state $+1$ (respectively $-1$) it waits an exponential clock with rate $\ol{\lambda}_+$ (respectively $\ol{\lambda}_-$) and when the clock rings it stays in its current state with probability $1/2$ and moves to the other state with probability $1/2$. 
 
We now prove a lemma which describes the distribution of $\SK(st_{a,\gamma})$ by coupling its $L(s)$'th coordinate to $M(st_{a,\gamma})$.

\begin{proposition} \label{prop:Behavior-of-S} 
    Let $s=s(N)>0$ be uniformly bounded away from zero, possibly diverging, and write $L=L(s)$.  
    Fix a well $\mathcal{C}$ defined by satisfying assignments to $e_1,...,e_{L-1}$. Let $\ol{X}$ denote Glauber dynamics with initialization in $\mathcal{C}_+ \cup \mathcal{C}_-$, restricted to the well $\mathcal{C}$ , i.e., $\ol{X}$ rejects updates to vertices in $e_1,...,e_{L-1}$. Write $\ol{\SK}$ for the projection of $\ol{X}$ onto coordinates $v_1,...,v_K$. Then at time $st_{a,\gamma}$ the law of $\ol{\SK}(st_{a,\gamma})$ is within $e^{-N^{\theta}}$ total variation distance to the law of $\widehat{\SK}$, where $\widehat{\SK}$ is defined according to cases for $s$: 
    
\begin{enumerate}
    \item \textbf{Case 1. $s t_{a,\gamma} \in [\delta_N \exp(2\beta |J_{(L)}| ), \delta_N^{-1}\exp(2\beta |J_{(L)}| ) ]$:} $\widehat \SK $ is defined as
    \begin{itemize}
    \item Coordinates $1 \leq i \le L-1$ have $\widehat \SK_i(s t_{a,\gamma}) = \ol{\SK}_i(0)$. 
    \item $\widehat \SK_{L(s)}(st_{a,\gamma})$ is drawn as follows: initialize $M(0)=\ol{\SK}_{L}(0)$, and set $\widehat{\SK}_{L}(st_{a,\gamma})= M(st_{a,\gamma})$. 
    \item Draw the remaining coordinates $\widehat \SK_i(st_{a,\gamma})$ for $L+1\le i \leq K$ according to $\pi_{\beta,\J}(\SK(\sigma) \in \cdot)$ conditioned on having $\sigma \in \mathcal C$ and $\sigma_{v_i} = \widehat \SK_i(st_{a,\gamma})$ for $1 \leq i \leq L$.
    \end{itemize}
    \item \textbf{Case 2. $s t_{a,\gamma} \notin [\delta_N \exp(2\beta |J_{(L)}| ), \delta_N^{-1}\exp(2\beta |J_{(L)}| ) ]$:} $\widehat \SK$ is defined as 
    \begin{itemize}
        \item Coordinates {$0 \leq i \leq L$} have $\widehat \SK_i(st_{a,\gamma})=\ol{\SK}_i(0)$.
        \item Coordinates { $L+1\leq i\leq K$} are drawn according to $\pi_{\beta,\J}(\SK(\sigma) \in \cdot)$ conditioned on having $\sigma \in \mathcal C$ and $\sigma_{v_i} = \widehat \SK_i(st_{a,\gamma})$ for {$0 \leq i \leq L$}.
    \end{itemize}
\end{enumerate}
    
\end{proposition}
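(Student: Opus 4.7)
The plan is to exploit a three-way separation of timescales: coordinates $1,\ldots,L-1$ of $\ol\SK$ are essentially frozen; coordinate $L$ evolves either as a frozen coordinate (short-time regime) or as a close copy of $M$ (critical regime); and coordinates $L+1,\ldots,K$ equilibrate to their conditional stationary law given the first $L$. I address the three groups in turn and stitch the errors.

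The first group is immediate: inequality~\eqref{eq:s-L(s)-comparison} places us inside the window where Lemma~\ref{lem:spin-structure} applies at index $L-1$, so with probability $1-e^{-cN^{\gamma-\xi}}$ the edges $e_1,\ldots,e_{L-1}$ stay satisfied through time $st_{a,\gamma}$ at their initial values, and (by Remark~\ref{rem:coupling-of-frozen}) $\ol X$ coincides with Glauber dynamics that also rejects updates at $\{v_i,w_i\}_{i\le L-1}$. For Case 2 ($st_{a,\gamma}\le \delta_N\exp(2\beta|J_{(L)}|)$), I reapply the same lemma at index $L$, which is permissible since $\delta_N=e^{-2DN^{1/2+1/(2\alpha)}}\le e^{-cN^{\gamma-\xi}}$, freezing $e_L$ as well. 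Then Theorem~\ref{thm:mixing-below-frozen-bond}, applied to the sub-well with all of $e_1,\ldots,e_L$ frozen, gives mixing time at most $\exp(2\beta|J_{(L+1)}|+DN^{1/2+1/(2\alpha)})$, which by Lemma~\ref{lem:spacing-of-J} is $\le e^{-N^\theta}st_{a,\gamma}$; sub-multiplicativity~\eqref{eq:tv-distance-submultiplicativity} then equilibrates the remaining coordinates $L+1,\ldots,K$ to the conditional stationary law that defines $\widehat\SK$ in Case 2.

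Case 1 is the heart of the proof: one must couple $\ol\SK_L$ to the two-state chain $M$. Let $\tau_1<\tau_2<\cdots$ be the times at which $\ol\SK_L$ changes value. Between consecutive $\tau_k$, $\ol X$ is confined to $\mathcal{C}_+$ or $\mathcal{C}_-$, and the change occurs via a transient unsatisfied episode on $e_L$ which by the reasoning in the proof of Lemma~\ref{lem:spin-structure} is almost surely corrected at the very next clock ring into the opposite satisfying assignment, with the new value uniform on $\{\pm\}$ (matching the resampling rule of $M$) because the clock that fired was equally likely to be at $v_L$ or $w_L$. Hence $\tau_{k+1}-\tau_k$ is the first arrival of a clock with instantaneous rate $\lambda_L(\ol X(t))$. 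Lemma~\ref{lem:Comparison-of-rates} upgrades this time-varying rate to the constant $\ol\lambda_\pm$ with multiplicative error $e^{-cN^{\gamma-\xi}}$ uniformly for $u\in[T_0,T_1]$, which provides a coupling of each $\tau_{k+1}-\tau_k$ to an exponential variable of rate $\ol\lambda_\pm$; iterating over the (polynomially many) jumps in $[0,st_{a,\gamma}]$ produces a coupling of $\ol\SK_L$ to $M$ with total error $e^{-N^\theta}$.

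Finally, conditional on the trajectory of $\ol\SK_L$, the coordinates $L+1,\ldots,K$ are equilibrated as follows. The inspection-paradox tail for the $\mathrm{Exp}(\ol\lambda_\pm)$-distributed sojourn of $M$ containing $st_{a,\gamma}$ shows that with probability $1-e^{-cN^{\gamma-\xi}}$ the elapsed time in the current sojourn at time $st_{a,\gamma}$ exceeds $\tmix^{\mathcal{C}_\pm}$, so Theorem~\ref{thm:mixing-below-frozen-bond} yields the claimed conditional stationarity of $(\ol\SK_{L+1},\ldots,\ol\SK_K)(st_{a,\gamma})$. Summing the error contributions completes the proof. The principal obstacle is the coupling in Case 1: replacing the highly random, time-varying jump rate of $\ol\SK_L$ by the deterministic $\ol\lambda_\pm$ of $M$ requires exactly the uniform-in-time concentration supplied by Lemma~\ref{lem:Comparison-of-rates}, which is why that lemma was established first.
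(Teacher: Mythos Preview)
Your approach is essentially identical to the paper's: freeze the top $L-1$ (or $L$) coordinates via Lemma~\ref{lem:spin-structure}, couple $\ol\SK_L$ to $M$ jump-by-jump in Case~1 using Lemma~\ref{lem:Comparison-of-rates}, and equilibrate the lower coordinates via Theorem~\ref{thm:mixing-below-frozen-bond}. Two quantitative slips are worth fixing, though neither breaks the argument.

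First, the inequality $\delta_N\le e^{-cN^{\gamma-\xi}}$ is backwards: since $\gamma-\xi>\tfrac12+\tfrac1{2\alpha}$, one has $\delta_N=e^{-2DN^{1/2+1/(2\alpha)}}\gg e^{-cN^{\gamma-\xi}}$, so $st_{a,\gamma}\le\delta_N e^{2\beta|J_{(L)}|}$ does \emph{not} meet the literal hypothesis of Lemma~\ref{lem:spin-structure}. Rerunning its Poisson-thinning argument directly gives an event probability $\le \delta_N e^{CN^{1/(2\alpha)}}\le e^{-DN^{1/2+1/(2\alpha)}}$, which still supplies $e^{-N^\theta}$ for some $\theta>0$; the paper's Case~2(a) uses exactly this weaker exponent.

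Second, the number of jumps of $\ol\SK_L$ on $[0,st_{a,\gamma}]$ in Case~1 is not polynomial but of order $\delta_N^{-1}=e^{O(N^{1/2+1/(2\alpha)})}$, since the jump rate is $\approx e^{-2\beta|J_{(L)}|}$ while $st_{a,\gamma}$ ranges up to $\delta_N^{-1}e^{2\beta|J_{(L)}|}$. The union bound over jumps still closes because the per-jump coupling error $e^{-cN^{\gamma-\xi}}$ dominates $\delta_N^{-1}$, again thanks to $\gamma-\xi>\tfrac12+\tfrac1{2\alpha}$; the paper makes this explicit via the large-deviation bound~\eqref{eq:number-of-clock-rings}. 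You also omit the $L=0$ sub-case, where nothing is frozen and Theorem~\ref{thm:mixing-below-frozen-bond} with $L=0$ gives full mixing---a one-line addition.
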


{ 
\begin{remark}
Case 2 of Proposition \ref{prop:Behavior-of-S} in the case where $L(s)=0$ should be understood as $\mathcal C = \Sigma_N$, and the coordinates $1 \leq i \leq K$ are drawn according to the marginal of the full Gibbs' measure $\pi_{\beta,\mathbf{J}}$. 
\end{remark} }

\begin{proof}
    Fix $s$ and $L=L(s)$. We work case by case, showing how to couple $\ol{\SK}(st_{a,\gamma})$ to $\widehat \SK(st_{a,\gamma})$. 

    \textbf{Case 1:} In this case we will appeal to Lemma~\ref{lem:Comparison-of-rates} and show that $\ol{\SK}_L(st_{a,\gamma})$ is coupled to $M(st_{a,\gamma})$ with high probability. 
Let $\tau_{\text{flip}}^L$ denote the first flip time of one of $v_L,w_L$.

    We  compute an upper bound on the total-variation distance of the law of $\tau_{\text{flip}}^L$, and the exponential clock attached to $M(t)$ from the initialization of $\ol{\SK}_L(0)$. First note that one has $\p( \tau_{\text{flip}}^L > t) = \E [e^{-\int_0^t \lambda_L (\ol{X}(u)) du}]$ so that 
\[
\frac{d}{dt} \p(\tau_{\text{flip}}^L \leq  t  ) = \E \big[ \lambda_L (\ol{X}(t)) e^{-\int_{0}^{t} \lambda_L(\ol{X}(u)) du)}\big] \, ,
\]
where $\lambda_L(\sigma) = \lambda_{v_L}(\sigma) + \lambda_{w_L }(\sigma)$ as in~\eqref{eq:rates-of-XL}, and $\E$ is expectation over the law of the Markov chain with the given initialization. For the remainder of the proof, write $\lambda=\lambda_L$. Since the initializations are the same, let us consider the case that that ${ \ol{\SK}_L(0)= +1}$ and $M(0)=+1$ (the other case being symmetrical). We bound the total-variation distance between $\tau_{\text{flip}}^L$ and the exponential clock of rate $\ol{\lambda}_+ =\E_{\sigma \sim \pi^{\mathcal C_+}} [\lambda(\sigma)] $ associated to $M$. Using that $\TV{\mu}{\nu} = \frac{1}{2} \|\mu - \nu\|_1$, we aim to show 
\begin{align}\label{eq:nts-clocks-close}
\int_{0}^{\infty} \abs{\ol{\lambda}_+ e^{-\ol{\lambda}_+ t} - \E[\lambda(\ol{X}(t)) e^{-\int_{0}^{t} \lambda(\ol{X}(u)) du}] } dt \le Ce^{ - c N^{\gamma - \xi}}\,.
\end{align}
We begin by splitting the integral over three intervals $I_1,I_2,I_3$ given by: 
\[
I_1=[0,\epsilon \ol{\lambda}^{-1}_+ ] \qquad I_2:= [\epsilon \ol{\lambda}_+^{-1}, \epsilon^{-1} \ol{\lambda}_+^{-1})  \qquad I_3:=  [ \epsilon^{-1} \ol{\lambda}_+^{-1},\infty) \, ,
\]
where $\epsilon = \exp(- N^{\gamma-\xi}/50 )$. By the triangle inequality we then have: 
\begin{align}
    \int_{0}^{\infty} \abs{\ol{\lambda}_+ &  e^{-\ol{\lambda}_+ t} - \E[\lambda(\ol{X}(t)) e^{-\int_{0}^{t} \lambda(\ol{X}(u)) du}] } dt \nonumber \\
    & \leq \int_{I_1 \cup I_3} \abs{\ol{\lambda}_+ e^{-\ol{\lambda}_+ t} - \E[\lambda(\ol{X}(t)) e^{-\int_{0}^{t} \lambda(\ol{X}(u)) du}] } dt \label{eq:I_1-cup-I_3}\\
    &\qquad + \int_{I_2} | \ol{\lambda}_+ - \E[\lambda(\ol{X}(t))] | { e^{-\ol{\lambda}_+ t} }dt \label{eq:I_2-a} \\
    &\qquad + \int_{I_2} \E \Big[ \big| 1- e^{-\ol{\lambda}_+ t + \int_{0}^{t} \lambda(\ol{X}(u)) du} \big| \lambda(\ol{X}(t)) e^{-\int_{0}^{t} \lambda(\ol{X}(u)) du }\Big] dt\,.\label{eq:I_2-b}
\end{align}

To bound the first integral, break both terms via the triangle inequality to get,
\[
\int_{I_1 \cup I_3} \abs{\ol{\lambda}_+ e^{-\ol{\lambda}_+ t} - \E[\lambda(\ol{X}(t)) e^{-\int_{0}^{t} \lambda(\ol{X}(u)) du}] } dt \leq \p (\text{Exp}(\ol{\lambda}_+) \in I_1 \cup I_3) + \p( \tau_{\text{flip}}^L \in I_1 \cup I_3 ) \, ,
\]
where here and throughout the paper, we use $\text{Exp}(r)$ to denote an independent (of everything) exponential clock of rate $r$. 
The first term is {$1+(e^{ - \epsilon^{-1}} - e^{-\epsilon})$}, which is exponentially small in $N^{\gamma-\xi}$. For the second term, by~\eqref{eq:bounds-on-lambda} $\tau_{\text{flip}}^{L}$ stochastically dominates an exponential clock of rate $\exp(-2\beta \abs{J_{(L)}} +CN^{1/(2\alpha)} )$, and is stochastically dominated by an exponential clock of rate $ \exp(-2\beta \abs{J_{(L)}} - CN^{1/(2\alpha) })$. Hence, for a constant $c>0$, 
\[
\p (\tau_{\text{flip}}^{L} \in I_1 \cup I_3 ) \leq { (1+e^{- \epsilon^{-1} e^{-3CN^{1/2\alpha}}} - e^{ - \epsilon e^{3CN^{1/2\alpha}}}) } \le e^{-cN^{\gamma-\xi} } \, .
\]
Thus the term~\eqref{eq:I_1-cup-I_3} is at most  $Ce^{-cN^{\gamma-\xi} }$ 
for some $C,c>0$.

To bound~\eqref{eq:I_2-a}, factor $\ol{\lambda}_+$ and apply \eqref{eq:ratio-of-rates-bound-X} to get:
\[
\int_{I_2} |\ol{\lambda}_+ - \E [\lambda(\ol{X}(t))] | e^{-\ol{\lambda}_+ t} dt \leq e^{-\frac{1}{32} N^{\gamma-\xi} } \int_{I_2} \ol{\lambda}_+ e^{-\ol{\lambda}_+ t} dt \leq e^{-\frac{1}{32} N^{\gamma-\xi} } \, .
\]
Finally for~\eqref{eq:I_2-b}, define the event $A$ by,
\[
A = \left\{ \sup_{t \in I_2}  \Bigg| \frac{ \frac{1}{t}\int_{0}^{t} \lambda(\ol{X}(u)) du}{\ol{\lambda}_+} -1 \Bigg| \leq 2e^{-\frac{1}{32} N^{\gamma-\xi} }   \right\}\,.
\]
{Since $I_2 \subset [T_0,T_1]$ from Lemma \ref{lem:Comparison-of-rates}}, one has that,
\[
\p(A^c) \leq 2 \exp\Big(- e^{\frac{N^{\gamma-\xi}}{32} } \Big) \, .
\]
On the event $A$,  one has,
\[
\sup_{t \in I_2} \Bigg| \left(\ol{\lambda}_+ - \frac{1}{t} \int_{0}^{t} \lambda(\ol{X}(u))du \right) t \Bigg| \leq \sup_{t \in I_2} 2e^{-\frac{1}{32} N^{\gamma-\xi}} \ol{\lambda}_+ t  \leq 2\epsilon^{-1} e^{-\frac{1}{32} N^{\gamma-\xi}}  \, ,
\]
and so using the inequality $e^x-1 \leq \max(2x,\frac{1}{2} x)$ for $x \in [-1,1]$, on the event $A$ one has
\[
\sup_{t \in I_2} \abs{e^{-(\ol{\lambda}_+ -\frac{1}{t} \int_{0}^{t} \lambda(\ol{X}(u))du ) t  } -1} 
\leq  4 \epsilon^{-1} e^{-\frac{1}{32} N^{\gamma-\xi}} \, .
\]
When not on the event $A$, we can use the general bounds of~\eqref{eq:bounds-on-lambda} and $t\le \epsilon^{-1}\ol{\lambda}_+^{-1}$ to see that 
\[
\sup_{t \in I_2} \Big| e^{-(\ol{\lambda}_+ -\frac{1}{t} \int_{0}^{t} \lambda(\ol{X}(u))du ) t}   -1 \Big| \le e^{\epsilon^{-1}e^{2CN^{1/2\alpha}}}\leq \exp \left( e^{\frac{1}{50} N^{\gamma-\xi} +2CN^{\frac{1}{2\alpha}} } \right) \, .
\]
Combining the inequalities above (and splitting the expectation over $A$ and $A^c$), we then have:
\begin{align*}
\int_{I_2} \E\Big[ \big| 1- e^{-\ol{\lambda}_+ t + \int_{0}^{t}  \lambda(\ol{X}(u)) du} \big|  & \lambda(\ol{X}(t)) e^{-\int_{0}^{t} \lambda(\ol{X}(u)) du }\Big] dt  \\
& \leq 4\epsilon^{-1} e^{-\frac{1}{32} N^{\gamma-\xi} }  +
2\exp \Big( e^{\frac{1}{50} N^{\gamma-\xi} +2 CN^{\frac{1}{2\alpha}} } - e^{\frac{1}{32} N^{\gamma-\xi} } \Big) 
\\
&\leq e^{-cN^{\gamma-\xi} } \, .
\end{align*}
Hence combining the estimates from above we get the desired claim~\eqref{eq:nts-clocks-close}.

While $M$ is making a uniform choice of $\pm 1$ at a time that is $\text{Exp}(\ol{\lambda}_{\pm})$, the process $\ol{\SK}_L(t)$ starts two independent $\text{Exp}(1)$ clocks at $\tau_{\text{flip}}^L$, one for $v_L$ and one for $w_L$ and up to an error of $\exp( -N^{\gamma})$ per~\eqref{eq:prob-unsatisfy-at-update-v}, whichever rings first (each having probability $1/2$ of doing so) dictates the state among $\pm 1$ that $\ol{\SK}_L$ next takes. Therefore, both the first resampling time of $\ol{\SK}_L(t)$ and its choice among $\pm 1$ can be coupled perfectly to the first resampling time and choice among $\pm 1$ for $M$, except with probability 
\begin{align*}
 \TV{\mathbb P(\tau_{\text{flip}}^L + \text{Exp}(2) \in \cdot )}{\mathbb P(\text{Exp}(\ol{\lambda}_+)\in \cdot)} + e^{- N^{\gamma}}
\end{align*}
By~\eqref{eq:nts-clocks-close} and a triangle inequality, it suffices to bound 
\begin{align*}
    \TV{\p(\text{Exp}(\ol{\lambda}_+) \in \cdot)}{\p(\text{Exp}(\ol{\lambda}_+) +\text{Exp}(2) \in \cdot )} \, ,
\end{align*}
which we can do explicitly. Indeed, 
the density of the sum of a clock of rate $\ol{\lambda}_+$ and a independent clock of rate $2$ is given by:
\[
f^{+}(z) = \frac{2\ol{\lambda}_+}{2-\ol{\lambda}_+} (e^{-\ol{\lambda}_+ z} - e^{-2z} ) \mathbf{1}_{z \geq 0} \, ,
\]
and consequently the total-variation distance between a clock of rate $\text{Exp}( \ol{\lambda}_{\pm})$ and the sum of two independent clocks, $\text{Exp}(\ol{\lambda}_+) + \text{Exp}(2)$ is given by, 
\begin{align*}
    d_{TV}\big(\p(\text{Exp}(\ol{\lambda}_+) \in \cdot) & ,  \p(\text{Exp}(\ol{\lambda}_+) +\text{Exp}(2) \in \cdot )\big) \\
     & = \int_{0}^{\infty} \Big|\ol{\lambda}_+e^{-\ol{\lambda}_+ z} - \frac{2\ol{\lambda}_+}{2-\ol{\lambda}_+} (e^{-\ol{\lambda}_+ z} - e^{-2z} )\Big| dz \\
    &\leq \int_{0}^{\infty} \ol{\lambda}_+ \Big|1 - \frac{2}{2-\ol{\lambda}_+} \Big| e^{-\ol{\lambda}_+ z} dz + \int_{0}^{\infty} \frac{\ol{\lambda}_+}{2-\ol{\lambda}_+} 2 e^{-2z} dz\,.
    \end{align*}
   In turn, by definition of ${ \ol{\lambda}_+}$ and Lemma~\ref{lem:energy-drop-off-J}, this is at most 
    \begin{align*}
        2\Big| \frac{\ol{\lambda}_+}{2-\ol{\lambda}_+} \Big| \leq 2\ol{\lambda}_+ \leq 4\exp(-2\beta \abs{J_{(L)}} + CN^{1/2\alpha} ) \leq e^{-\frac{N^{\gamma}}{2}}\,.
    \end{align*}

 Finally, we iteratively apply the above coupling {or its analogue when $\ol{\SK}_L(t) = -1$}, between each update time of $\SK_L$ and $M$, and take a union bound over all updates. 
In order for the union bound not to have too many terms, notice that over an interval of the form 
$$\left[\exp\left(2\beta \abs{J_{(L)}} - 2DN^{\frac{1}{2} + \frac{1}{2\alpha}}\right),\exp\left(2\beta \abs{J_{(L)}} + 2DN^{\frac{1}{2} + \frac{1}{2\alpha}}\right) \right] \, ,$$ 
we may assume there are at most $\exp(100DN^{1/(2\alpha) + 1/2})$  successful transitions from satisfaction to dissatisfaction for $e_L$. 
Indeed, if $\tin_i$ denotes the $i^{\text{th}}$ time $e_L$ re-enters $\sat$, {then by~\eqref{eq:bounds-on-lambda}, $\tin_{i+1} -\tin_i$ stochastically dominates an exponential clock of rate $\exp(- 2\beta |J_{(L)}| + CN^{1/(2\alpha)})$ for all $i\ge 1$. Thus, if we let $E_i$ be i.i.d.\ $\text{Exp}(\exp( - 2\beta |J_{(L)}| + CN^{1/(2\alpha)}))$, the tails on the number of successful transitions between satisfaction and dissatisfaction are upper bounded by the simple large deviation estimate} 
\begin{align}\label{eq:number-of-clock-rings}
\p \Big(  \sum_{i=1}^m E_i < \frac{m}{2} \exp(2\beta \abs{J_{(L)}} - CN^{\frac{1}{2\alpha}} )   \Big) \leq e^{-cm}   \, , 
\end{align}
for some $c>0$.
Hence, by taking $m= \exp(100DN^{1/2+1/(2\alpha)})$, we see with with probability at least $1-e^{-N^{\gamma-\xi}}$  there are at most $\exp({100DN^{\frac{1}{2}+\frac{1}{2\alpha} }})$ {changes to the vertices at $\{v_L,w_L\}$} by time $st_{a,\gamma}$. 

 On the event that there are less than $\exp({100DN^{\frac{1}{2\alpha}+\frac{1}{2}}})$ updates over the interval of interest, we may pick an optimal coupling of the reentry into wells via coordinate $L$ and the clocks of $M$ for the first $\exp({100DN^{\frac{1}{2\alpha}+\frac{1}{2}}})$ many transits. Then by a union bound we get for any fixed time $u \in [\delta_N \exp(2\beta \abs{J_{(L)}}), \delta_N^{-1} \exp(2\beta \abs{J_{(L)}} )]$ that:
\begin{align} \label{eq:fixed-time-M-SL-coupling-bound}
\p(  M(u) \neq \ol{\SK}_L(u) ) \leq  e^{ 100 DN^{\frac{1}{2} + \frac{1}{2\alpha}} - cN^{\gamma - \xi}} + e^{ - N^{\gamma-\xi} } \leq  e^{-c'N^{\gamma-\xi}} \, ,
\end{align}
for some $c'>0$.
To finish the proof it remains to argue that the most recent update on the $L$'th coordinate gives enough room until time $st_{a,\gamma}$ for all the coordinates with indices bigger than $L$ to mix conditional on the states of $(\mathsf{S}_i)_{i\le L}$. Let us set:
\[
R_L = \exp\left(2\beta \abs{J_{L}} -\frac{1}{2} N^{\gamma-\xi}\right) \, ,
\]
then conditional on $\ol{X}(st_{a,\gamma} - 2R_L)$ we have by Lemma \ref{lem:spin-structure} that {$e_L \in \sat(\ol{X}(t))$} for all $t \in [st_{a,\gamma}-R_L,st_{a,\gamma}]$ , and furthermore $(\ol{X}_{v_L}(t),\ol{X}_{w_L}(t)) = (\ol{X}_{v_L}(st_{a,\gamma}-2R_L),\ol{X}_{w_L}(st_{a,\gamma}-2R_L)$), with probability $1-e^{-\frac{1}{4}N^{\gamma-\xi}}$. Furthermore note that conditional on the value of $M$ at time $st_{a,\gamma}-2R_L$, $M$ has no updates in the window $[s t_{a,\gamma} - 2R_L,st_{a,\gamma}]$ with probability $1-e^{-N^{\gamma-\xi}}$. 
On the event of no successful update to $\ol{X}(t)$ over $[st_{a,\gamma} - R_L,st_{a,\gamma}]$ and $e_i \in \sat(X(st_{a,\gamma}-R_L))$, we have that $\ol{X}(t)$
is equal to the restricted Glauber dynamics $\ol{\mathcal{X}}$  initialized at the configuration that $\ol{X}$ takes at $st_{a,\gamma} - R_L$, and rejecting all updates to coordinates $\{v_i,w_i\}$ for $i \leq L$. As $R_L$ is exponentially larger than the mixing time of $\ol{\mathcal{X}}$, Theorem~\ref{thm:mixing-below-frozen-bond} applies,  and so we may combine all of the estimates above to obtain: 
\[
\p ( \ol{\SK}(st_{a,\gamma}) \neq \widehat{\SK}_L(st_{a,\gamma})  ) \leq e^{-cN^{\gamma-\xi} } \, ,
\]
for some absolute constants $c>0$.

\medskip
    \textbf{Case 2:} This case is significantly simpler as the timescale is not on the same order as the typical flip-rate of any one coordinate. Let us consider the following three possible sub-cases of $s, L(s)$: 
\begin{enumerate}[(2a)]
   \item \label{eq:small-s} $L(s)=K$, and $s$ satisfies $ s t_{a,\gamma} < \delta_N \exp(2\beta \abs{J_{(K)} })$, 

   \item \label{eq:between-s}  { $1 \leq L(s) \leq K$, and $s$ satisfies $st_{a,\gamma} \in (\delta_N^{-1} \exp(2\beta \abs{J_{(L+1)}}), \delta_N \exp(2\beta \abs{J_{(L)}})]$, }
  
    \item \label{eq:large-s} {$L(s)=0$, and $s$ satisfies $s t_{a,\gamma} > \delta_N^{-1} \exp(2\beta \abs{J_{(1)} } )  $}. 
\end{enumerate}

For \ref{eq:small-s} Lemma~\ref{lem:spin-structure} implies $\ol{\SK}(s_1t_{a,\gamma})=\ol{\SK}(0)$ with probability at least $1- \exp( -\frac{1}{4} N^{\frac{1}{2}+\frac{1}{2\alpha}} )$, and so the statement is immediate.

For \ref{eq:between-s}, we shall assume without loss of generality  that $\ol{X}(0) \in \mathcal{C}_+$.  Lemma~\ref{lem:spin-structure} shows that no coordinates corresponding to edges $e_{(1)},..., e_{(L)}$ flip in $\ol{X}$ with probability $1-e^{-\frac{1}{4}N^{\gamma-\xi}}$ before time $st_{a,\gamma}$. On this event, the process $\ol{X}(t)$ is coupled perfectly to the process which rejects all updates to $\{v_i,w_i\}_{i\le L}$. Theorem~\ref{thm:mixing-below-frozen-bond} in combination with \eqref{eq:tv-distance-submultiplicativity} then implies:  
\[
\TV{ \p( \ol{\SK}(st_{a,\gamma} )\in \cdot)}{\pi^{\mathcal{C}} } \leq 2^{-\exp(DN^{\frac{1}{2}+\frac{1}{2\alpha}})} + e^{ - \frac{1}{4} N^{\gamma - \xi}} \, .
\]

Lastly for \ref{eq:large-s} we appeal to the mixing time bounds from Theorem~\ref{thm:mixing-below-frozen-bond} with $L=0$. By~\eqref{eq:tv-distance-submultiplicativity} one has that:
\[
\TV{\p(\ol{X}(s t_{a,\gamma}) \in \cdot)}{\pi_{\beta,\J}} \leq 2^{-e^{D N^{\frac{1}{2}+\frac{1}{2\alpha}} }} \, ,
\]
and immediately this implies that:
\[
\TV{ \p(\ol{\SK} \in \cdot)}{ \pi_{\beta,\J}( (\sigma_{v_1},...,\sigma_{v_K}) \in \cdot )  }  \leq 2^{-e^{D N^{\frac{1}{2}+\frac{1}{2\alpha}} }} \, ,
\]
proving the result.
\end{proof}

The following lemma is exactly analogous  to the previous one, but for the process $Y$ instead of $\SK$. We begin by defining the auxiliary process $M'$ which will track the value of a specific coordinate. Suppose we fix a well $\mathcal{W}$ defined by an assignment of values for $Y_1=y_1,...,Y_{L-1}=y_{L-1}$, and let $\mathcal{W}_+,\mathcal{W}_-$ denote the subsets of $\mathcal{W}$ with $Y_L=\pm 1$. Write $\pi^{\mathcal{W}}$ for the measure $\pi_{\beta,\J}$ conditional on the event that $e_1,...,e_K \in \sat(\sigma)$, and $\sigma_{v_i} = y_i$ for $1 \leq i \leq L-1$. 
The process $M'$ takes values in $\{\pm 1\}$ with rates given by: 
\begin{align}  \label{eq:rates-of-Mprime}
\ol{Z}_+ &=  \E_{\sigma \sim \pi^{\mathcal{W}_+}} (Z_{v_L}(\sigma) + Z_{w_L}(\sigma)) \\
\ol{Z}_- &=  \E_{\sigma \sim \pi^{\mathcal{W}_-}} (Z_{v_L}(\sigma) + Z_{w_L}(\sigma)) \, ,
\end{align}
where $Z_{v_l}$ is as in ~\eqref{eq:def-of-Z-and-m}.
$M'$ makes updates as follows: when in state $\pm 1$, run an exponential clock of rate $\ol{Z}_{\pm}$, and when this clock rings, uniformly resample its state from $\pm 1$. 

\begin{proposition} \label{prop:behavior-of-Y} 
    Let $s=s(N)>0$ be a sequence of times (uniformly greater than zero, possibly tending to infinity), and write $L=L(s)$. Fix a well $\mathcal{W}$ defined by an assignment to coordinates 
    $Y_1,...,Y_{L-1}$. Let $\ol{Y}$ denote the process $Y$ restricted to $\mathcal{W}$, i.e., it is 
    initialized in $\mathcal{W}$ and rejects updates to $Y_1,...,Y_{L-1}$. Then at time $s$ the law of $\ol{Y}(s)$ is within $e^{-N^{\theta}}$ total variation distance to the law of 
    $\widehat{Y}$, where $\widehat{Y}$ is defined according to cases for $s$:
    
\begin{enumerate}[(1)]
    \item \label{eq:mainthm-case1} \textbf{Case 1. $s t_{a,\gamma} \in \left[\delta_N \exp\left(2\beta \abs{J_{(L)}} \right), \delta_N^{-1}\exp\left(2\beta \abs{J_{(L)}} \right) \right]$:} define $\widehat Y$ as 
\begin{itemize}
    \item  Coordinates $i \le L- 1$ have  $\widehat Y_i(s) = \ol{Y}_i(0)$.
    \item Set $M'(0)=\ol{Y}_L(0)$, run $M'$ for time $st_{a,\gamma}$, and set $\widehat{Y}_L(s) =M'(st_{a,\gamma})$.
    \item The remaining coordinates of $\widehat Y(s)$ are drawn according to the stationary distribution conditioned on the values of $\widehat{Y}_i(s)$ for $1 \leq i \leq L$. 
\end{itemize}
    \item \label{eq:mainthm-case2} \textbf{Case 2:  $s t_{a,\gamma} \not \in   \left[\delta_N \exp\left(2\beta \abs{J_{(L)}} \right), \delta_N^{-1}\exp\left(2\beta \abs{J_{(L)}} \right) \right]$}: define $\widehat Y$ as,
    \begin{itemize}
        \item  Coordinates { $0 \leq i \leq L$} have $\widehat Y_i(s) = \ol{Y}_i(0)$ 
        \item The remaining coordinates of $\widehat Y$ are drawn according to the stationary distribution conditional on the values of $\widehat Y_i(s)$ for { $i=0,...,L$. }
    \end{itemize}
\end{enumerate}
\end{proposition}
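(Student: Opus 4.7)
The plan is to follow the structure of the proof of Proposition~\ref{prop:Behavior-of-S}, substituting Proposition~\ref{Prop:Mixing-in-well-Y} for Theorem~\ref{thm:mixing-below-frozen-bond}, Lemma~\ref{lem:comparison-of-rates-Y} for Lemma~\ref{lem:Comparison-of-rates}, and invoking Lemma~\ref{lem:Reversibility-of-Y} to identify $\pi^{\mathcal{W}}$ as the reversible measure of $\ol{Y}$. The situation is actually simpler than in Proposition~\ref{prop:Behavior-of-S}: a clock ring at coordinate $L$ of $Y$ directly resamples $Y_L$ uniformly from $\{\pm 1\}$, so the secondary $\text{Exp}(2)$ clock and the $e^{-N^{\gamma}}$ correction from the Glauber argument will not arise.

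For Case~1, assume $\ol{Y}(0)\in \mathcal{W}_+$ (the other case being symmetric), let $\tau^{L,Y}$ be the first ring time of the clock attached to coordinate $L$ of $\ol{Y}$ (whose instantaneous rate is $t_{a,\gamma} Z_L(\ol{Y}(t))$), and let $\tau^{M'}$ be the first ring time of the clock of $M'$ started from $+1$. I would bound $\TV{\text{Law}(\tau^{L,Y})}{\text{Law}(t_{a,\gamma} \tau^{M'})} \le Ce^{-cN^{\gamma-\xi}}$ by the same three-window decomposition as in~\eqref{eq:nts-clocks-close}: split the integral on $[0,\infty)$ into $I_1 \cup I_2 \cup I_3$ around $(t_{a,\gamma}\ol{Z}_+)^{-1}$; on $I_1 \cup I_3$ both densities have exponentially small mass by tail estimates on the exponential law and the crude uniform bounds on $Z_L$ coming from Lemma~\ref{lem:energy-drop-off-J}; on $I_2$ the ratio bound and the path-wise concentration bound from Lemma~\ref{lem:comparison-of-rates-Y} control $|\ol{Z}_+ - \E[Z_L(\ol{Y}(t))]|$ and $|\tfrac{1}{t}\int_0^t Z_L(\ol{Y}(u))du - \ol{Z}_+|$ respectively. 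Because both $\ol{Y}_L$ at $\tau^{L,Y}$ and $M'$ at its first ring time are uniform $\pm 1$ samples, their resampled values couple perfectly. I would then iterate this coupling at each successive ring time of coordinate $L$: by the analogue of~\eqref{eq:number-of-clock-rings}, at most $\exp(100 D N^{\frac{1}{2}+\frac{1}{2\alpha}})$ such rings occur before time $s$ with probability $1-e^{-N^{\gamma-\xi}}$, so a union bound gives $\p(\ol{Y}_L(s) \neq M'(st_{a,\gamma})) \le e^{-c'N^{\gamma-\xi}}$. Finally, to draw coordinates $Y_{L+1},\ldots,Y_K$ from the appropriate conditional stationary distribution, I would back up by a buffer $R_L/t_{a,\gamma}$ before time $s$ (with $R_L$ as in the Glauber proof); on an event of probability $1-e^{-\frac{1}{4}N^{\gamma-\xi}}$ no further update to coordinate $L$ occurs in the buffer, and Proposition~\ref{Prop:Mixing-in-well-Y} combined with~\eqref{eq:tv-distance-submultiplicativity} brings the remaining coordinates to $\pi^{\mathcal{W}}$ conditional on the fixed value of $Y_L$.

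Case~2 subdivides in parallel with Case~2 of Proposition~\ref{prop:Behavior-of-S}: (2a) when $L(s)=K$ and $st_{a,\gamma}<\delta_N\exp(2\beta |J_{(K)}|)$, a union bound on the Poisson clocks of coordinates $1,\ldots,K$ (each of effective rate $O(t_{a,\gamma}\exp(-2\beta|J_{(K)}|))$) shows no update occurs before time $s$ except with probability $e^{-N^{\theta}}$; (2b) in the intermediate range, the same clock-size estimate rules out any update to coordinates $1,\ldots,L$ before time $s$, and on that event Proposition~\ref{Prop:Mixing-in-well-Y} together with~\eqref{eq:tv-distance-submultiplicativity} drives the remaining coordinates to $\pi^{\mathcal{W}}$; (2c) when $L(s)=0$, Proposition~\ref{Prop:Mixing-in-well-Y} with $L=0$ and~\eqref{eq:tv-distance-submultiplicativity} yield that $\ol{Y}(s)$ is within total variation $2^{-\exp(DN^{\frac{1}{2}+\frac{1}{2\alpha}})}$ of $\pi^Y$, matching the description of $\widehat{Y}$. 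The main technical burden is the three-window estimate for $\tau^{L,Y}$ together with the bookkeeping of errors over $\sim \exp(DN^{\frac{1}{2}+\frac{1}{2\alpha}})$ clock rings in Case~1; but since $Y$'s update mechanism is direct uniform resampling, this step is strictly cleaner than its Glauber counterpart and no new ideas beyond those of Proposition~\ref{prop:Behavior-of-S} are required.
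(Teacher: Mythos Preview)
Your proposal is correct and follows exactly the approach the paper takes: the paper's own proof consists of a single sentence stating that one repeats the argument of Proposition~\ref{prop:Behavior-of-S} with Proposition~\ref{Prop:Mixing-in-well-Y} and Lemma~\ref{lem:comparison-of-rates-Y} substituted for Theorem~\ref{thm:mixing-below-frozen-bond} and Lemma~\ref{lem:Comparison-of-rates}. Your additional observation that the $\text{Exp}(2)$ clock and the $e^{-N^{\gamma}}$ correction disappear because $Y$ resamples coordinate $L$ uniformly in one step is accurate and makes the argument strictly cleaner than its Glauber counterpart.
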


\begin{proof}
The proof is identical to the proof of Proposition~\ref{prop:Behavior-of-S}, using Proposition~\ref{Prop:Mixing-in-well-Y} and Lemma~\ref{lem:comparison-of-rates-Y} in place of Theorem~\ref{thm:mixing-below-frozen-bond} and Lemma~\ref{lem:Comparison-of-rates}. 
\end{proof}

With Propositions~\ref{prop:Behavior-of-S} and \ref{prop:behavior-of-Y}, we are now in position to prove Theorem~\ref{thm:main1}.

\begin{proof}[Proof of Theorem~\ref{thm:main1}]
It suffices to prove the following claim inductively. 

\smallskip
\textbf{Inductive hypothesis}: For all $n$, all times $s_1 < s_2 < ... < s_n$ with $s_{i} - s_{i-1}$ bounded away from zero in $N$,  there exists a coupling such that $\SK(s_i t_{a,\gamma}) = Y(s_i)$ for all $i\le n$ with probability $1-n e^{-N^{\theta}}$ for some $\theta>0$.

\smallskip
\textbf{Base case:} We artificially introduce $s_0$ independent of $N$ (i.e., of constant order, and say less than $\frac{s_1}{2}$ so that $s_1 - s_0$ is also bounded away from zero uniformly in $N$), and we use this as our base case. 
We couple the value of $\SK (s_0t_{a,\gamma})$ with $Y(s_0)$.
First note that Lemma \ref{lem:gaps} implies that $s_0 t_{a,\gamma} < \exp(2\beta \abs{J_{(K)}} -\frac{{a} }{4} N^{\gamma-\xi} )$. 
By Lemma~\ref{lem:spin-structure} the distribution of $\SK (s_0 t_{a,\gamma})$ is within $e^{-cN^{\gamma-\xi}}$ in total-variation distance from $U \sim \text{Unif}(\{ \pm 1 \}^{K})$, so let $(\SK (s_0 t_{a,\gamma}),U)$ denote an optimal coupling between these random variables, and set $Y(0)=U$. Note that a simple bound on the first set of clock rings of $Y$ implies that $Y(0)=Y(s_0)$ with probability at least $1-e^{-cN^{\gamma-\xi}}$ for some $c>0$.

\smallskip
\textbf{Inductive Step:}
Suppose now that the inductive hypothesis holds, and we have couplings so that $Y(s_i)=\SK(s_i t_{a,\gamma} )$ for $1 \leq i \leq n$. To construct the coupling at time $s_{n+1}$, it will suffice by the Markov property to prove the following: Suppose that $\SK(0)=Y(0)$, and that $E_{a,\gamma} \subset \sat(X(0))$; then there is a coupling so that $\SK(s t_{a,\gamma}) = Y(s)$ with probability $1-e^{-N^{\theta}}$, where $s=s_{n+1}-s_n$ is bounded away from zero, but possibly growing with $N$. In what follows let $L = L(s)$.

By the Markov property, it is equivalent to appeal to Propositions~\ref{prop:Behavior-of-S}--\ref{prop:behavior-of-Y}, to determine the distribution of $\SK(st_{a,\gamma})$ and $Y(s)$ conditional on the values of $X(0)$ and $Y(0)$.
First note that $st_{a,\gamma} < \exp(2\beta \abs{J_{(L-1)}} - \frac{\beta}{2} N^{\gamma-\xi} )$ (see Remark \ref{rem:Frozen-above-L(s)-below-s}), and hence via Lemma \ref{lem:spin-structure} we have with probability $1-e^{-cN^{\gamma-\xi}}$ that $X(t)$ is equal to the restricted dynamics $\ol{X}(t)$ rejecting all updates to coordinates $\{v_i,w_i \}$ for $i < L$ for $t \in [0,st_{a,\gamma}]$. Similarly the bound on $st_{a,\gamma}$ implies that $Y$ is equal in distribution to the process $\ol{Y}$, rejecting updates to coordinates $Y_1,...,Y_{L-1}$ with probability $1-e^{-cN^{\gamma-\xi}}$. 

Now depending on the value of $s$, we either fall into case (1) or (2) from Propositions~\ref{prop:Behavior-of-S}--\ref{prop:behavior-of-Y}.

In case (1) the law of $\SK(st_{a,\gamma})$ conditional on time $0$ is given by fixing the coordinates $i < L(s)$, running $M(t)$ for time $ st_{a,\gamma}$ to sample $\SK_L(st_{a,\gamma})$, and then sampling the remaining points below according to the stationary distribution. The law of $Y$ conditional on time $0$ is given by fixing coordinates $i <L$, running $M'(t)$  for time $st_{a,\gamma}$ to sample $Y_L$, and then sampling the remaining coordinates below according to the stationary distribution conditioned on the values $Y_i(s)$ for $1 \leq i \leq L$.   
Note that Lemma~\ref{lem:energy-drop-off-J} implies the ratios of the rates for $M$ and $M'$ satisfy: 
\[
{\Bigg| \frac{\ol{Z}_+}{\ol{\lambda}_+ } -1 \Bigg| \leq 2 e^{-\beta N^{\gamma}} \, ,}
\]
so by Pinkser's inequality we have for $\ol{Z}_+,\ol{\lambda}_+$ (with the same holding for $\ol{Z}_-,\ol{\lambda}_-$) that: 
\begin{align*}
\TV{\text{Exp}(\ol{Z}_+)}{\text{Exp}(\ol{\lambda}_+)} & \leq \sqrt{\frac{1}{2} D_{\text{KL}}(\text{Exp}(\ol{Z}_+) \,\|\, \text{Exp}( \ol{\lambda}_+ ))} \\  & \leq \Big| \log\Big( \frac{\ol{Z}_+}{\ol{\lambda}_+}\Big) - \Big(1- \frac{\ol{\lambda}_+}{\ol{Z}_+} \Big) \Big|^{1/2} \leq C \exp(-c N^{\gamma} ) \, ,
\end{align*}
where $D_{\text{KL}}(\cdot\, \| \,\cdot)$ is the Kullback--Leibler divergence. 
Since we are in case (1) of Propositions~\ref{prop:Behavior-of-S}--\ref{prop:behavior-of-Y}, we have that,
\[
st_{a,\gamma} \in \left[\exp\left(2\beta \abs{J_{(L)}} - 2DN^{\frac{1}{2} + \frac{1}{2\alpha}}\right),\exp\left(2\beta \abs{J_{(L)}} + 2DN^{\frac{1}{2} + \frac{1}{2\alpha}}\right) \right] \, ,
\]
and over an interval of length $\exp(2\beta \abs{J_{(L)}} + 2D N^{1/2+1/(2\alpha)} )$ we may assume that there are at most $\exp(100D N^{1/2+1/(2\alpha) })$ clock rings for $M$ and $M'$. Indeed, treating the case for $M$, note we have that exponential clocks of rate $\ol{\lambda}_+,\ol{\lambda}_-$ both stochastically dominate an exponential clock with rate $\exp(-2\beta \abs{J_{(L)}} + CN^{\frac{1}{2\alpha} })$, and hence a large deviation estimate similar to~\eqref{eq:number-of-clock-rings} implies that the probability of having more than $\exp(100 DN^{1/2+1/(2\alpha) })$ clock rings over the interval above is exponentially small in $N^{\gamma-\xi}$. 

Consequently if $M(0)=M'(0)$, then by sequentially taking an optimal coupling on the clock rings, and using the same uniform choice of $\pm 1$ at each clock ring, we have that, 
{
\begin{align} \label{eq:bound-M-M'-distance}
\p( M(st_{a,\gamma}) \neq M'(st_{a,\gamma}) ) \leq e^{-cN^{\gamma-\xi} } \, ,
\end{align}
}
for some constant $c>0$. To finish the coupling of $\SK(st_{a,\gamma})$ and $Y(s)$ we note that if $\mathcal{C}$ denotes the well defined by the assignment of the values of $\SK_1,...,\SK_L$ at time $st_{a,\gamma}$, then one has,
{
\begin{align} \label{eq:TV-distance-Y-S}
\TV{\pi_{\beta,\J}( (\sigma_{v_i})_{1 \leq i < L} \in \cdot \mid \sigma \in \mathcal{C})}{\pi^Y( (Y_i)_{1  \leq i < L} \in \cdot \mid  Y \in \mathcal{C} )} \leq C_{\beta} \exp(-c_{\beta} N^{\gamma} ) \, .
\end{align}
To see that ~\eqref{eq:TV-distance-Y-S} holds, note by Proposition ~\ref{lem:Reversibility-of-Y} that on the event that a configuration $\sigma$ satisfies the top $K$ bonds, the law of $\pi_{\beta,\mathbf{J}}$ and $Y$ are equal, and this event has probability at least $1-C_{\beta} e^{-c_{\beta} N^{\gamma} }$ by Theorem ~\ref{THM:Allignment-of-Bonds}. 
}

Therefore, by taking an optimal coupling on the lower coordinates, { and combining ~\eqref{eq:bound-M-M'-distance} ~\eqref{eq:TV-distance-Y-S} with the bounds given by Lemma ~\ref{lem:spin-structure}}, we have that for some $\theta>0$, 
\begin{align} \label{eq:inductive-step-final-bound-a}
\p( \SK(st_{a,\gamma}) \neq Y(s) \mid  \SK(0)=Y(0) ) \leq e^{-N^{\theta}} \, .
\end{align}

Now for case (2) we proceed as follows: by Propositions~\ref{prop:Behavior-of-S}--\ref{prop:behavior-of-Y} , the distribution of $\SK(st_{a,\gamma})$ and $Y(s)$ are within total-variation distance $e^{-N^{\theta}}$ of the distributions determined by fixing the values of coordinates $i \leq L$, and sampling the remaining coordinates conditional on the values $\SK_i,Y_i$ for $i \leq L$. Since $\SK(0)=Y(0)$ and coordinates $1,...,L$ are frozen, we may appeal to \eqref{eq:TV-distance-Y-S} and take an optimal coupling on the lower coordinates to get 
\begin{align} \label{eq:inductive-step-final-bound}
\p( \SK(st_{a,\gamma}) \neq Y(s) \mid  \SK(0)=Y(0) ) \leq e^{-N^{\theta}} \, .
\end{align}
To finish the proof, we write,
{
\begin{align*}
\p\Big( & \bigcup_{i\le n+1}  \SK(s_i t_{a,\gamma})\ne Y(s_i)\Big)  \\ 
& \leq \p\Big(\bigcup_{i\le n} \SK(s_i t_{a,\gamma}\ne Y(s_i)\Big) + \p(\SK(s_{n+1}t_{a,\gamma})\ne Y(s_{n+1}) \mid \SK(s_{i}t_{a,\gamma}) = Y(s_{i}) \ \forall i \leq n )\,.
\end{align*}
}
We see the first term on the right-hand side is at most $ne^{-N^{\theta}}$ by the inductive hypothesis, and the second term is at most $e^{-N^{\theta}}$ by using Markov property to apply~\eqref{eq:inductive-step-final-bound-a}--\eqref{eq:inductive-step-final-bound} with $s= s_{n+1} -s_n$. Together we get that the above is at most $(n+1) e^{-N^{\theta}}$ 
as desired. 
\end{proof}

\appendix

\section{Deferred Proofs  }

\subsection{The High Temperature Regime} \label{appendix:high-temp}
In this section we prove that if $\beta<\beta_0$ from~\eqref{eq:beta-0} that the law of $Y$ from Theorem~\ref{thm:main1} is within $o(1)$ in total-variation distance to the uniform measure, and that $q_{a,\gamma}$ from Theorem~\ref{thm:main-autocorrelation} has limiting distribution $\delta_0$. Throughout this section, we work on the events of Section \ref{sec:coupling-matrix-preliminaries}, and write $K=N^{1-\alpha \gamma}$ for $\gamma \in (\frac{1}{2\alpha} + \frac{1}{1+\alpha}, \frac{1}{\alpha} )$. 

\begin{proposition} \label{prop:law-q-high-temp}
Suppose that $(2\beta)^{\alpha} \Gamma(1-\alpha) <1$, then with $\p_{\J}$ probability $1-o(1)$ we have: 
\[
\TV{\text{Law}(q_{a,\gamma}^{(N)})}{ \delta_0} \to 0 \, .
\]
\end{proposition}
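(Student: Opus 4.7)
The statement $q_{a,\gamma}^{(N)} \to \delta_0$ in total variation, with $\p_{\mathbf J}$-probability $1-o(1)$, is equivalent to saying that the real random variable $\langle \sigma, \sigma' \rangle_N$ concentrates at $0$ in probability. It therefore suffices to prove $\E[\langle \sigma, \sigma' \rangle_N^2] = o(1)$, where the expectation is over the uniform random well $\mathcal C_i$ and the conditionally i.i.d.\ samples $\sigma, \sigma' \sim \pi_{\beta, \mathbf J}(\cdot \mid \mathcal C_i)$.

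Because $\sigma, \sigma'$ both lie in $\mathcal C_i$ they agree on $V_{a,\gamma}$, yielding the deterministic decomposition
\[
\langle \sigma, \sigma' \rangle_N = \frac{|V_{a,\gamma}|}{N} + \frac{1}{N} \sum_{v \notin V_{a,\gamma}} \sigma_v \sigma'_v\,,
\]
in which the first term is $2K/N = O(N^{-\alpha \gamma}) = o(1)$. The task reduces to controlling the second moment of the sum over $V_{a,\gamma}^c$. By conditional independence of $\sigma, \sigma'$ given $\mathcal C_i$, this equals $\frac{1}{N^2}\sum_{v, w \notin V_{a,\gamma}} \E_i[\mu_i(\sigma_v \sigma_w)^2]$ where $\mu_i := \pi_{\beta, \mathbf J}(\cdot \mid \mathcal C_i)$.

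I would then invoke Theorem~\ref{thm:high-temp-regime}: under $(2\beta)^\alpha \Gamma(1-\alpha) < 1$, the stationary measure $\pi^Y$ on wells is asymptotically uniform, so by Lemma~\ref{lem:Reversibility-of-Y} all Gibbs weights $\pi_{\beta, \mathbf J}(\mathcal C_i)$ are approximately $2^{-K}$. Combined with Theorem~\ref{THM:Allignment-of-Bonds} that $\pi_{\beta, \mathbf J}(\bigcup_i \mathcal C_i) = 1 - o(1)$, the uniform average over wells may be replaced, at $o(1)$ cost in total variation, by the corresponding Gibbs-weighted average. The latter rewrites as a standard two-replica Gibbs expectation $\frac{1}{N^2} \sum_{v, w \notin V_{a,\gamma}} \pi_{\beta, \mathbf J}(\sigma_v \sigma_w)^2$ up to negligible error.

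The remaining task is to show this two-replica Gibbs expectation tends to zero, which is the classical high-temperature content of the spin glass: an annealed second moment computation gives $\E_{\mathbf J}[Z_N^2] = (1+o(1)) \E_{\mathbf J}[Z_N]^2$ under $(2\beta)^\alpha \Gamma(1-\alpha) < 1$, from which overlap concentration follows by the standard derivative-of-free-energy trick. The main obstacle is carrying out this annealed second-moment estimate in the infinite-variance regime; the condition on $\beta,\alpha$ is exactly what ensures convergence of the relevant moment generating function of $\nu_{\alpha, N}$, and the analysis should closely parallel the proof of Theorem~\ref{thm:high-temp-regime} itself.
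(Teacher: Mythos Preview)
Your reduction to a second-moment bound on the overlap is fine, and the observation that the $V_{a,\gamma}$-contribution is $O(N^{-\alpha\gamma})$ is correct. The reduction from the uniform average over wells to a Gibbs-weighted average via Theorem~\ref{thm:high-temp-regime} is also reasonable, though not strictly necessary (the paper bounds each $\langle R_{1,2}^2\rangle_\tau$ uniformly in $\tau$).

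The genuine gap is in your final step. You propose to control the unconditioned two-replica overlap via an annealed second-moment argument, $\E_{\mathbf J}[Z_N^2]=(1+o(1))\E_{\mathbf J}[Z_N]^2$, asserting that the condition $(2\beta)^\alpha\Gamma(1-\alpha)<1$ ensures convergence of the relevant moment generating function. This is false: for $\alpha\in(0,1)$ the density $d\nu_{\alpha,N}$ has power-law tails, so $\E_{\mathbf J}[e^{tJ_{12}}]=\infty$ for every $t\neq 0$, and in particular $\E_{\mathbf J}[Z_N]=\infty$. No annealed moment computation on $Z_N$ can be carried out in this regime, and the derivative-of-free-energy route you allude to would require separate control of the limiting free energy that is not in hand here.

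What the paper does instead is pass to the Edwards--Sokal random-cluster representation of each conditional measure $\pi_{\beta,\mathbf J}(\cdot\mid\mathcal C_\tau)$, so that $\langle\sigma_v\sigma_w\rangle_\tau^2\le\pi^{\rc}_\tau(v\leftrightarrow w)$. By FKG this is dominated by independent bond percolation with edge probabilities $p_e=1-e^{-2\beta|J_e|}$, and Lemma~\ref{lem:asymptotics-of-pe} shows that the condition $(2\beta)^\alpha\Gamma(1-\alpha)<1$ is precisely the statement $N\E_{\mathbf J}[p_{12}]<1$, i.e.\ subcriticality of the annealed percolation. The connection probabilities then decay as $O(1/N)$ by a path-counting argument (Lemma~\ref{lem:decay-of-connection-prob}), and summing gives $\E_{\mathbf J}\E[q_{a,\gamma}^2]=O(1/N)$. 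The role of $(2\beta)^\alpha\Gamma(1-\alpha)<1$ is thus a percolation threshold, not an MGF convergence condition; this is the key idea you are missing.
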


We will appeal to a coupling of the Gibbs measure conditioned on a satisfying assignment to the top $K$ bonds, to a collection of random-cluster, or FK, measures: see e.g.,~\cite{CN} for more on this coupling. 
Fix an integer $L \leq N $, and let $\tau$ be any configuration on $\{ \pm 1 \}^{2L}$ so that $e_i \in \sat(\tau)$ for $1 \leq i \leq L$.

Write $\mathcal{C}_{\tau}$ for the collection of configurations in $\Sigma_N$, agreeing with $\tau_{v_i},\tau_{w_i}$ for $1 \leq i \leq L$. 
We now describe the coupling between $\pi_{\beta,\J}(\cdot \mid \mathcal{C}_{\tau} )$, and a random cluster measure which we denote as $\prc_{\tau}$.

Given $\sigma \sim \pi_{\beta,\J}( \cdot \mid \mathcal{C}_{\tau} )$ we may generate $\omega \in \Omega_N:=  \{0,1\}^{\binom{N}{2}}$ drawn from $\prc_{\tau}$ as follows: 
\begin{enumerate}
    \item If $e \in \sat(\sigma)$, set $\omega_e=1$ with probability $p_{e,L}$ independent of all other edges, where 
    \begin{align} \label{eq:percolation-parameters}
p_{e,L} = \begin{cases}
    1 &\text{if} \ e = e_i \ \text{for some} \ i \leq L \\ 
    1-e^{-2\beta \abs{J_{(e)}}} &\text{otherwise} 
\end{cases} \, .
\end{align}

    \item If $e \not \in \sat(\sigma)$, set $\omega_e=0$. 
\end{enumerate}

In the other direction, given a random-cluster configuration $\omega \sim \prc_{\tau}$ we may generate a spin glass configuration sampled from $\pi_{\beta,\J}(\cdot \mid \mathcal{C}_{\tau})$ as follows:
\begin{enumerate}
    \item Set $\sigma_{v_l}= \tau_{v_l}$ for $l \leq K$ ,
    \item For each connected component of $\omega$ disjoint from $\{v_l,w_l\}_{l \leq L}$ pick an arbitrary vertex  in the component and assign it $\pm 1$ independently of all other components, 
    \item assign values to the remaining spins by requiring $\{ e: \omega_e =1 \} \subset \sat(\sigma)$. 
\end{enumerate}

This coupling fully defines the joint law of $\Sigma_N \times \Omega_N$, and we can compute directly to show that $\prc_{\tau}$ is of the form,
\[
\prc_{\tau} (\omega)=  \frac{1}{Z} \mathbf{1}_{\mathbf{S}}(\omega) 2^{\mathsf{Comp}(\omega)} \p_{\ind,L}(\omega)\,, 
\]
where 
\[\mathbf{S}= \{ \omega \in \Omega_N : \exists \ \sigma \in \Sigma_N, \ J_{ij}\sigma_i \sigma_i \omega_{ij} \geq 0  \text{ for all $i,j$}  \}  \,,
\]
and where $\mathsf{Comp}(\omega)$ is the number of connected components of $\omega$, and $\p_{\ind,L}$ is independent percolation with parameters $p_{e,L}$ from~\eqref{eq:percolation-parameters}.

We note that $\mathbf{1}_{\mathbf{S}}$ and $2^{\mathsf{Comp}(\omega)}$ are decreasing functions on $\Omega_N$, and hence the FKG inequality for independent percolation implies that for any increasing event $A$,
\[
\max_{\tau} \prc_{\tau}(A) \leq \p_{\ind,L}(A) \, .
\]
Furthermore by monotonicity of independent percolation in the parameters $p$, we have a uniform bound  
\begin{align} \label{eq:domination-by-percolation}
\max_{0 \leq L \leq K} \max_{\tau  } \prc_{\tau}(A) \leq \p_{\ind}(A) \, ,
\end{align}
where $\p_{\text{ind}}=\p_{\ind,K}$.

Lastly we remark that if $\langle f(\sigma) \rangle_{\tau} $ denotes the expectation of $f$ under $\pi_{\beta, \J}(\cdot \mid \mathcal{C}_\tau)$, and $\langle f(\omega)\rangle_{\tau}^{\rc}$ denotes the expectation under $\pi_{\tau}^\rc$, then one has: 
\[
\langle \sigma_i \sigma_j \rangle_{\tau}  = \langle\eta_{ij}(\omega) \rangle_{\tau}^{\rc}\, ,
\]
where $\eta_{ij}(\omega)$ is defined as:
\[
\eta_{ij}(\omega) = 
\begin{cases}
\prod_{e \in \gamma_{ij}} \sgn{J_e} &\text{if} \ i \xleftrightarrow{\omega} j  \\
0 &\text{otherwise} 
\end{cases} 
\, ,
\]
with $\gamma_{ij}$ any path of open edges in $\omega$ connecting $i$ to $j$ (it is clear that the choice of $\gamma$ does not affect the product of signs of bond-values along open edges). 
\\

Having constructed the coupling of the FK measures to the conditional Gibbs' measures we may now shed some light on the definition of $\beta_0$. We have the following Lemma: 

\begin{lemma} \label{lem:asymptotics-of-pe} Suppose that $(2\beta)^{\alpha} \Gamma(1-\alpha) <1$, and that $p(J)= 1-e^{-2\beta \abs{J}}$, then $N \E[p(J)] < 1$ uniformly over $N$.      
\end{lemma}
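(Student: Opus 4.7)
The plan is to compute $N\mathbb{E}[p(J)]$ essentially explicitly, identify its $N\to\infty$ limit as $(2\beta)^\alpha \Gamma(1-\alpha)$, and then observe monotonicity in $N$ to upgrade this to a uniform bound.

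First I would unpack the expectation using the density from \eqref{eq:coupling-distribution}. By symmetry of $\nu_{\alpha,N}$ and cancellation of the $\frac{1}{N}$ factor against the $N$ prefactor,
\[
N \mathbb{E}[p(J)] = \alpha \int_{N^{-1/\alpha}}^{\infty} (1 - e^{-2\beta x}) \, x^{-(1+\alpha)} \, dx.
\]
Then I would substitute $u = 2\beta x$ to obtain
\[
N \mathbb{E}[p(J)] = \alpha (2\beta)^\alpha \int_{2\beta N^{-1/\alpha}}^{\infty} (1 - e^{-u}) \, u^{-(1+\alpha)} \, du.
\]

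The key observations are now two-fold. First, the integrand $(1-e^{-u})u^{-(1+\alpha)}$ is pointwise nonnegative, and the lower endpoint $2\beta N^{-1/\alpha}$ is decreasing in $N$; hence $N\mathbb{E}[p(J)]$ is monotonically increasing in $N$. Second, I would compute the limiting integral as $N \to \infty$ by integration by parts, taking $dv = u^{-(1+\alpha)} du$ so $v = -\frac{1}{\alpha} u^{-\alpha}$, and $w = 1 - e^{-u}$ so $dw = e^{-u}\,du$. The boundary term at $\infty$ vanishes because $\alpha > 0$, and the boundary term at $0$ vanishes because $u^{-\alpha}(1-e^{-u}) \sim u^{1-\alpha}$ with $\alpha < 1$. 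This yields
\[
\int_{0}^{\infty} (1 - e^{-u}) u^{-(1+\alpha)} du = \frac{1}{\alpha} \int_0^\infty u^{-\alpha} e^{-u} du = \frac{\Gamma(1-\alpha)}{\alpha},
\]
so that $\lim_{N\to\infty} N\mathbb{E}[p(J)] = (2\beta)^\alpha \Gamma(1-\alpha)$.

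Combining these two facts: the hypothesis $(2\beta)^\alpha \Gamma(1-\alpha) < 1$ says the supremum value is less than $1$, and monotonicity in $N$ says $N\mathbb{E}[p(J)]$ is bounded above by this supremum for every $N$. There is no real obstacle here — the only mildly delicate step is justifying the boundary terms in the integration by parts vanish (which uses $\alpha \in (0,1)$), and this is where the restriction on $\alpha$ enters crucially. The result follows immediately.
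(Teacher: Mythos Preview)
Your proof is correct and computes the same integral as the paper, but the final step differs in a way worth noting. The paper integrates by parts at the finite-$N$ level to obtain
\[
N\E[p(J)] = N\bigl(1 - e^{-2\beta N^{-1/\alpha}}\bigr) + (2\beta)^\alpha \int_{2\beta N^{-1/\alpha}}^{\infty} e^{-u} u^{-\alpha}\,du,
\]
then bounds the first term by $2\beta N^{1-1/\alpha}$ via $1-e^{-x}\le x$ and the second from above by $(2\beta)^\alpha\Gamma(1-\alpha)$; this only yields $N\E[p(J)]<1$ for all sufficiently large $N$, because of the residual $O(N^{1-1/\alpha})$ term. Your route---observing that $N\E[p(J)]$ is monotone increasing in $N$ (since the lower limit of a nonnegative integral shrinks) and that the supremum equals $(2\beta)^\alpha\Gamma(1-\alpha)$---is cleaner and in fact gives the bound for every $N$, matching the lemma statement literally rather than only asymptotically. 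Both arguments rely on $\alpha\in(0,1)$ in the same place (vanishing of the boundary term at $0$).
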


\begin{proof}
By definition of $\p_\J$ we have: 
\[
\E [p] = 1 - \int_{1}^{\infty} \alpha e^{-\frac{2\beta x}{N^{1/\alpha}}} x^{-1-\alpha} dx \, , 
\]
looking just at the integral, write $a_N = \frac{2\beta}{N^{1/\alpha} }$, and making the change of variables $u = a_Nx$ one has: 
\[
\int_{1}^{\infty} \alpha e^{-\frac{2\beta}{N^{1/\alpha}} x} x^{-1-\alpha} dx = a_N^{\alpha} \int_{a_N}^{\infty} \alpha e^{-u} u^{-1-\alpha} du \, ,
\]
integrating by parts the integral above is equal to: 
\[
a_N^{\alpha} \int_{a_N}^{\infty} \alpha e^{-u} u^{-1-\alpha} du = e^{-a_N} - a_N^{\alpha} \int_{a_N}^{\infty} e^{-u} u^{-\alpha} du \, ,
\]
and hence (substituting back in $a_N$) we have:
\[
N \E p_e = N( 1-e^{-\frac{2\beta}{N^{1/\alpha} }}) + (2\beta)^{\alpha} \int_{\frac{2\beta}{N^{1/\alpha}}}^{\infty} e^{-u} u^{-\alpha} du \, .
\] 
Since $e^x$ is Lipschitz on $(-\infty,0]$ with Lipschitz constant $1$, and the integral on the right hand side converges to $\Gamma(1-\alpha)$ monotonically from below as $N \to \infty$ we have: 
\[
N \E p_e \leq  \frac{2\beta}{N^{1/\alpha-1}} + (2\beta)^{\alpha} \Gamma(1-\alpha) \, ,
\]
and hence $N \E p_e < 1$ uniformly for all large $N$.  
\end{proof}

We now state a precise form of the uniformity for the stationary measure of $Y$. 

\begin{theorem}\label{thm:high-temp-regime}
Suppose that $(2\beta)^{\alpha} \Gamma(1-\alpha ) <1$. Then there is $C(\beta),\epsilon(\gamma) >0$  so that: 
\[
\p_\J \Big( \sup_{L<K} \max_{\tau \in \Sigma_{N,L} } \TV{ \pi_{\beta,\J}( (\sigma_{v_{L+1}},...,\sigma_{v_{K}} ) \in \cdot \mid \mathcal{C}_{\tau} )}{ \text{Unif} \ ( \{ \pm 1 \}^{K-L}  )} > \frac{C}{N^{\epsilon}} \Big) \to 0 \,,
\] 
where $\text{Unif} \ (\{ \pm 1 \}^{K-L} )$ denotes the uniform measure on $\{\pm 1\}^{K-L}$.
\end{theorem}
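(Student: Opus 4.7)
The plan is to leverage the random-cluster coupling to reduce the total-variation bound to a subcritical-percolation connectivity estimate, and then exploit the high-temperature assumption via Lemma~\ref{lem:asymptotics-of-pe}. Observe first that given $\omega \sim \prc_\tau$, the spin $\sigma$ is constructed by fixing the spins on $\{v_l,w_l\}_{l\le L}$ and, for each $\omega$-component disjoint from these fixed vertices, choosing a sign uniformly. Define the ``bad'' event
\[
B := \big\{ \omega : \exists\, i \ne j \in \{1, \dots, K\} \text{ with } \bar v_i \stackrel{\omega}{\longleftrightarrow} \bar v_j \big\},
\]
where $\bar v_i := \{v_i, w_i\}$. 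On $B^c$, every supernode $\bar v_i$ for $i > L$ lies in its own $\omega$-component, disjoint from the fixed vertices, so the spins $(\sigma_{v_{L+1}}, \dots, \sigma_{v_K})$ are i.i.d.\ uniform on $\{\pm 1\}$. Hence
\[
\TV{\pi_{\beta,\J}((\sigma_{v_{L+1}}, \dots, \sigma_{v_K}) \in \cdot \mid \mathcal C_\tau)}{\mathrm{Unif}(\{\pm 1\}^{K-L})} \le \prc_\tau(B),
\]
and since $B$ is an increasing event that does not depend on $L$ or $\tau$, \eqref{eq:domination-by-percolation} gives $\sup_{L,\tau} \prc_\tau(B) \le \p_\ind(B)$.

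Next I would bound $\p_\ind(B)$ by a union bound combined with an annealed pair-connection estimate. By Lemma~\ref{lem:energy-drop-off-J}(1) (using $\gamma > 1/(2\alpha)$), each vertex is incident to at most one top-$K$ edge, so contracting these top-$K$ edges produces a multigraph on $N - K$ atoms consisting of $K$ supernodes of size $2$ and $N - 2K$ singletons. The event $B$ is then precisely that some pair of supernodes becomes merged in the induced non-top-$K$ independent percolation, and
\[
\p_\ind(B) \;\le\; 4\binom{K}{2} \max_{u \ne u'} \p_\ind(u \leftrightarrow u').
\]
To bound the pair probability I pass to the annealed measure: for any specific edge, $\E_\J[p_{e,K}] \le \bar p + K/\binom{N}{2}$, where $\bar p := \E_\J[1 - e^{-2\beta |J_e|}]$. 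Lemma~\ref{lem:asymptotics-of-pe} together with the hypothesis $(2\beta)^\alpha \Gamma(1-\alpha) < 1$ gives $N \bar p \le 1 - \delta$ for some $\delta > 0$, so the annealed percolation is subcritical mean-field. A standard Erd\H os--R\'enyi computation using the expected-cluster-size bound $\E|\mathrm{cluster}(u)| \le (1 - N\bar p)^{-1}$ and vertex-symmetry then yields
\[
\E_\J \, \p_\ind(u \leftrightarrow u') = O(1/N),
\]
with the $K = o(\sqrt N)$ forced-open top edges contributing only a lower-order correction.

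Combining these bounds gives $\E_\J \, \p_\ind(B) = O(K^2/N) = O(N^{1 - 2\alpha\gamma})$. Since $\gamma > \tfrac{1}{2\alpha} + \tfrac{1}{1+\alpha}$ forces $2\alpha\gamma - 1 \ge \tfrac{2\alpha}{1+\alpha} > 0$, a Markov inequality upgrades the annealed estimate to the quenched bound $\p_\J(\p_\ind(B) > C/N^\epsilon) \to 0$ for suitable $C = C(\beta)$ and $\epsilon = \epsilon(\gamma) \in (0, 2\alpha\gamma - 1)$, which is exactly the statement of the theorem. The main obstacle is controlling the $K$ forced-open top edges in $\p_\ind$: these could in principle create long-range shortcuts that ruin the subcritical picture, but since each supernode is a pair of vertices (Lemma~\ref{lem:energy-drop-off-J}(1)) and $K = O(N^{1-\alpha\gamma})$ is well below $\sqrt N$, their effect on the pair-connection estimate is only a $o(1/N)$ perturbation to the mean-field bound.
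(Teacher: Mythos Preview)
Your proposal is correct and follows essentially the same route as the paper: reduce the TV distance to the random-cluster event that two supernodes $\bar v_i,\bar v_j$ are connected, use monotonicity~\eqref{eq:domination-by-percolation} to pass to $\p_{\ind}$, bound the annealed pair-connection probability by $O(1/N)$ using subcriticality $N\bar p<1$ from Lemma~\ref{lem:asymptotics-of-pe}, union-bound over the $O(K^2)$ pairs, and apply Markov's inequality. The paper packages the connectivity step as Lemma~\ref{lem:rc-high-temp-structure}, and the one place where it is more explicit than your sketch is exactly the obstacle you flag at the end: the $K$ forced-open edges in $\p_{\ind}$. Rather than treating them as a perturbation of an Erd\H os--R\'enyi cluster-size bound, the paper conditions on $\mathcal F_K$, couples the remaining edges to i.i.d.\ $G(N,c/N)$ with $c<1$, and proves a dedicated path-counting lemma (Lemma~\ref{lem:decay-of-connection-prob}) summing over paths by how many forced edges they traverse; this yields $\p(x\leftrightarrow y\mid \mathcal F_K)\le C/N$ directly. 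Your contraction/perturbation heuristic is correct in spirit and would also go through with a little more work, but as written it is the one step that is asserted rather than justified.
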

Taking $L=0$, and noticing that with probability $1-o(1)$, $|E_{a,\gamma}|\le K$, Theorem \ref{thm:high-temp-regime} then implies that the stationary measure of $Y$ has $o(1)$ distance to uniform. 

Towards proving Theorem \ref{thm:high-temp-regime} and Proposition \ref{prop:law-q-high-temp} we begin with two lemmas. The first lemma is a statement about mean-field Bernoulli percolation, i.e., random graphs.  

\begin{lemma} \label{lem:decay-of-connection-prob}
    Fix a subset $F \subset \{ (i,j) : 1 \leq i < j \leq N \}$ of vertex-disjoint edges in $\{1,...,N\}$ of size $K=o(N)$. Let $G_F$ denote an Erd\H os-R\'enyi $G(N,c/N)$ with $c<1$ conditioned on every edge in $F$ being present. Then for any $x,y$ such that $(x,y) \not \in F$ one has:
    \begin{align} \label{eq:decay-for-connections}
\p( x \xleftrightarrow{G_F} y ) \leq \frac{C}{N} \, ,
    \end{align}
   for a constant $C>0$ depending only on $c$.  
\end{lemma}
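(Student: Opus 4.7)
The plan is to prove the bound by a direct union bound over self-avoiding paths from $x$ to $y$ in $G_F$, stratified by how many edges of $F$ each path uses. The starting observation is that, because the edges of $G(N,c/N)$ are mutually independent, conditioning on the event that all edges of $F$ are present does not alter the distribution of the remaining edges; thus $G_F$ is distributionally equal to $G \cup F$ where $G \sim G(N, c/N)$ is sampled independently and the matching $F$ is overlaid deterministically. Consequently, a self-avoiding path $\gamma$ from $x$ to $y$ of length $k$ that uses exactly $j$ of its edges from $F$ is open in $G_F$ with probability $(c/N)^{k-j}$, since the $j$ deterministic $F$-edges are always present and the remaining $k-j$ edges must each be independently present in $G$.

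The next step is to count such paths. The key combinatorial input is that $F$ is vertex-disjoint (i.e., a matching), so any two $F$-edges appearing in a single path must occupy non-adjacent positions along it. The number of ways to choose $j$ non-adjacent positions among $k$ consecutive edge positions is $\binom{k-j+1}{j}$; each chosen position is then filled by one of the $K$ edges of $F$ in one of two orientations, contributing a factor at most $(2K)^j$; and the remaining $k-1-2j$ intermediate vertices of the path are free, contributing at most $N^{k-1-2j}$ choices. Multiplying this count by the per-path probability $(c/N)^{k-j}$, the union bound becomes
\[
\p\bigl(x \xleftrightarrow{G_F} y\bigr) \;\le\; \frac{1}{N}\sum_{k\ge 1}\sum_{j\ge 0}\binom{k-j+1}{j}\Bigl(\frac{2K}{N}\Bigr)^{\!j} c^{\,k-j}\,.
\]

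Setting $m = k-j$ and applying the identity $\sum_{j\ge 0}\binom{m+1}{j}t^j = (1+t)^{m+1}$ collapses the inner sum to $(1 + 2K/N)^{m+1}$, leaving a geometric series in $c(1+2K/N)$. This converges as soon as $c(1+2K/N) < 1$, which holds for all $N$ large because $c<1$ and $K = o(N)$, and evaluates to $(1+2K/N)/(1 - c(1+2K/N))$, which tends to $1/(1-c)$ as $N \to \infty$. Dividing by $N$ then gives $\p(x\xleftrightarrow{G_F} y) \le C/N$ for a constant $C = C(c)$ independent of $N$ and $F$.

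I expect the main obstacle to be careful bookkeeping of the combinatorial count: the $(2K)^j$ factor relies crucially on vertex-disjointness of $F$ so that distinct $F$-edges in the same path cannot share endpoints, and the enumeration of free intermediate vertices needs minor adjustment if an $F$-edge happens to be incident to $x$ or $y$. Both issues are absorbed into the same crude upper bound above (we simply allow some nominal choices that produce infeasible walks, since this only overcounts), after which the convergence of the geometric series is routine.
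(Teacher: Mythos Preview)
Your argument is correct and follows the same path-counting strategy as the paper: a union bound over self-avoiding paths from $x$ to $y$, stratified by length and by the number of $F$-edges used, followed by a geometric summation. The paper uses the cruder position count $\binom{m}{l}$ and closes the inner sum via $\sum_l x^l/l!\le e^x$, whereas you exploit non-adjacency to get $\binom{k-j+1}{j}$ and collapse the inner sum exactly with the binomial theorem; this is a cosmetic difference and your execution is, if anything, slightly cleaner.

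One small correction to your last paragraph: the boundary case where an $F$-edge occupies position $1$ (or position $k$) of the path does \emph{not} ``only overcount.'' In that case the number of free intermediate vertices is $k-2j$ (or $k+1-2j$ if both ends are $F$-edges), so your factor $N^{k-1-2j}$ actually \emph{undercounts}. The fix is immediate: any such path begins with the forced edge $(x,x')\in F$ and continues as a path from $x'$ to $y$ whose first step is not in $F$, so the total contribution of this case is bounded by another copy of your main sum; handling $y$ symmetrically costs at most a factor of $4$, which is harmless for the $C/N$ conclusion. (The paper's free-vertex count $N^{m-2l-1}$ has the same blind spot, so this is not a point of difference between the two proofs.)
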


\begin{proof}
Taking a union bound over all simple paths $\mathcal{P}$ from $x$ to $y$ one has that:
\[
\p(x \xleftrightarrow{G_F} y) \leq \sum_{m=1}^{N-1} \sum_{ \mid \mathcal{P} \mid = m} \Big( \frac{c}{N} \Big)^{m - \mid \mathcal{P} \cap F \mid } \, .
\]
We now bound the total number of paths $\mathcal{P}$ of length $m$ whose intersection with $F$ is empty by the total number of paths to get:
\begin{align} \label{eq:path-bound-empty}
\sum_{m=1}^{N-1} \sum_{ \substack{\mid \mathcal{P} \mid =m \\ \mid \mathcal{P} \cap F \mid = 0 } } \Big( \frac{c}{N} \Big)^{m} \leq \sum_{m=1}^{N-1} \binom{N}{m-1} (m-1)! { \Big( \frac{c}{N} \Big)^m } \leq \frac{1}{(1-c)N} \, .
\end{align}
It thus suffices to bound the contribution from those paths $\mathcal{P}$ with non-empty intersection with $F$. Note that a path $\mathcal{P}$ of length $m$ can use at most $\lceil m/2\rceil +1$ edges in $F$. We rewrite:
\[
\sum_{\substack{ \mid \mathcal{P} \mid=m  \\ \mathcal{P} \cap F \neq \emptyset } } \Big( \frac{c}{N} \Big)^{m-\abs{\mathcal{P} \cap F} } = \Big( \frac{c}{N} \Big)^m \sum_{l=1}^{\lceil m/2\rceil +1} (N/c)^l \abs{ \{ \mathcal{P}=m : \abs{\mathcal{P} \cap F} = l \} }
\]
The number of all paths of length $m$ from $x$ to $y$ using $l$ edges in $F$ is bounded above by: 
\[
\binom{m}{l} \binom{N -2K}{m-2l-1} \binom{K}{l} l! 2^l (m-2l-1)! \leq  \frac{(2m)^l}{l!} K^l N^{m-2l-1} = N^{m-1} \frac{(2m)^l}{l! (N^2/K)^l} \, .
\]
Consequently the contribution from paths with non-empty intersection with $S$ satisfies:
\begin{align} \label{eq:path-bound-non-empty}
\sum_{\substack{ \mid \mathcal{P} \mid=m  \\ \mathcal{P} \cap F \neq \emptyset } } \Big( \frac{c}{N} \Big)^{m-\abs{\mathcal{P} \cap F} } 
\leq \frac{1}{N} \sum_{m=1}^{N-1} \sum_{l=1}^{ \lceil m/2 \rceil +1} \frac{c^{m-l}(2m)^l}{l! (N/K)^l} \leq \frac{1}{N} \sum_{m=1}^{N-1} \exp \Big[ m \Big( \log(c) + \frac{2K}{cN}  \Big) \Big] \, . 
\end{align}
This is at most $C'/N$, since $0<c<1$ and $K=o(N)$, and so the partial sums of this upper bound are uniformly bounded in $N$. Combining \eqref{eq:path-bound-empty} and \eqref{eq:path-bound-non-empty} we then have:
\[
\p( x \xleftrightarrow{G_F} y ) \leq \frac{C}{N} \, , 
\]
for some constant $C$ depending only on $c$.
\end{proof}

We now prove a structural result for the random cluster models defined above. 

\begin{lemma} \label{lem:rc-high-temp-structure}
    Suppose $(2\beta)^{\alpha} \Gamma(1-\alpha) < 1$ and recall $K= N^{1-\alpha \gamma}$. Then with $\p_J$ probability $1-o(1)$, all distinct pairs of vertices $(v_l,w_l)_{l \le K} $ lie in distinct clusters with high probability uniformly over the choice of $L$ and $\tau$. Specifically there is $C(\beta),\epsilon(\gamma)>0$  such that
    \[
\p_{\J} \left( \sup_{0 \leq L <K} \max_{\tau \in \Sigma_{N,L}}  \prc_{\tau}( \exists  l \neq l' \leq K : \{v_l,w_l\} \leftrightarrow \{v_{l'},w_{l'}\} ) > C/N^{\epsilon}   \right) \to 0 \, ,
    \]
    { where $\Sigma_{N,L}$ corresponds to all configurations in $\{ \pm 1 \}^N$ such that $e_1,...,e_L \in \sat (\sigma). $}
    
\end{lemma}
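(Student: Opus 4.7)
The strategy is to combine the FKG stochastic domination of the random-cluster measure by Bernoulli percolation with the decay-of-connections estimate in Lemma~\ref{lem:decay-of-connection-prob}. The event $\{\exists l\ne l':\{v_l,w_l\}\leftrightarrow\{v_{l'},w_{l'}\}\}$ is increasing in the percolation configuration $\omega$, so the domination \eqref{eq:domination-by-percolation} gives $\prc_\tau(\text{bad event}) \le \p_{\ind,L}(\text{bad event})$. Moreover, $\p_{\ind,L}$ is monotone increasing in $L$ (raising $L$ only forces more edges open) and does not depend on $\tau$. Thus the supremum over $\tau$ and $L<K$ is controlled by $\p_{\ind,K}(\text{bad event})$, where all top $K$ edges are forced open and every other edge is open independently with probability $p_e = 1-e^{-2\beta|J_e|}$.

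To bound this probability I would invoke Lemma~\ref{lem:asymptotics-of-pe}, which yields $N\E_\J[p_e]=c^*<1$; fix any $c\in(c^*,1)$. Since the top $K$ edges are vertex-disjoint and $K=N^{1-\alpha\gamma}=o(N)$, the setup of Lemma~\ref{lem:decay-of-connection-prob} applies with $F=\{e_1,\ldots,e_K\}$. Conditional on the identities and values of the top $K$ edges, the remaining $|J_e|$'s are i.i.d.\ from $\nu_{\alpha,N}$ restricted to $[0,|J_{(K)}|]$, a truncation that alters the first moment by a factor of $1+o(1)$ because $|J_{(K)}|\asymp N^\gamma$ is deep in the tail. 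Hence $\E[p_e\mid\text{top }K]\le c/N$, and the same path-counting union bound underlying Lemma~\ref{lem:decay-of-connection-prob} yields, for any single pair $(x,y)\in\{v_l,w_l\}\times\{v_{l'},w_{l'}\}$,
\[
\E_\J\big[\p_{\ind,K}(x\leftrightarrow y\mid\mathbf{J})\big] \le \frac{C}{N}.
\]

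A union bound over the $\binom{K}{2}$ choices of $l\ne l'$ and the $4$ choices of endpoints then yields the annealed estimate $\E_\J[\p_{\ind,K}(\text{bad event})] \le 2CK^2/N = O(N^{1-2\alpha\gamma})$. Since $\gamma>\gamma_0>\tfrac{1}{2\alpha}$, the exponent $\epsilon := 2\alpha\gamma-1$ is strictly positive, and Markov's inequality converts this into the claimed $\p_\J$-probability bound with any $\epsilon'<\epsilon$. The main technical wrinkle will be a mild adaptation of Lemma~\ref{lem:decay-of-connection-prob}: as stated, its hypothesis is $x,y\notin F$, whereas here $v_l,w_l,v_{l'},w_{l'}$ all lie in $F$. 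The combinatorial path-counting in the proof is dimension-based and goes through verbatim when the endpoints lie in $F$, with only a change in the leading constant. The second minor point is the verification that the truncation $|J_e|<|J_{(K)}|$ perturbs $\E[p_e]$ only by a factor $1+o(1)$, which is immediate given the scale of $|J_{(K)}|$.
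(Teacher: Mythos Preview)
Your proposal is correct and follows essentially the same route as the paper: stochastic domination by $\p_{\ind,K}$, annealing over the non-top-$K$ couplings to reduce to Erd\H os--R\'enyi with parameter $\E[p_e]<c/N$, then Lemma~\ref{lem:decay-of-connection-prob} and a union bound over $O(K^2)$ pairs, finishing with Markov. Two small remarks: first, your ``technical wrinkle'' is a misreading---the hypothesis of Lemma~\ref{lem:decay-of-connection-prob} is that the \emph{edge} $(x,y)\notin F$, not that the vertices avoid $F$, and for $l\ne l'$ none of the four cross-pairs $(v_l,v_{l'}),(v_l,w_{l'}),(w_l,v_{l'}),(w_l,w_{l'})$ lie in $F$ since the $e_i$ are vertex-disjoint, so the lemma applies verbatim; second, the paper handles the truncation more cleanly by monotonicity (replacing truncated $J_e$ by fresh unrestricted samples only increases $p_e$, and the event is increasing), which avoids your $1+o(1)$ estimate, but your direct computation is equally valid.
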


\begin{proof}
Let $A$ denote the event two distinct pairs of vertices $(v_l,w_l), (v_{l'},w_{l'})$ lie in the same cluster. Then the event $A$ is increasing and so by \eqref{eq:domination-by-percolation} it will suffice to bound $\p_{\ind,K}(A)$.
Denote by $\mathcal{F}_K$ the collection $\{e_1,...,e_K \}$. 
Conditional on $\{J_e \}_{e \in \mathcal{F}_K }$, the remaining bonds $\{J_{ij} \}_{(i,j) \not \in \mathcal{F}_K}$ are i.i.d conditioned on their absolute value lying below $\abs{J_{(K)}}$. 
We can couple this measure with one where the bonds in $\mathcal{F}_K$ are infinity, and the remaining values are i.i.d sampled from $\nu_{\alpha,N}$.
We use the following sampling scheme to generate our random variables:

    First, generate a set $F \subset \{ (i,j): 1 \leq i < j \leq N \}$ of size $K$, and set $F= \mathcal{F}_K$. Second, generate $\{ J_{ij}' \}$ from $\p_\J$. Finally assign the values $J_e$ as follows: 
    
    \begin{enumerate}
        \item  Assign the edges $e \in \mathcal{F}_K$ the random variables $\abs{J_{(1)}'},...,\abs{J_{(K)}'}$ uniformly at random.
        \item   For $e \not \in \mathcal{F}_{K}$ sample an i.i.d collection $J_{e,l}$ from $\p_\J$. Let $l_e = \inf\{ l \geq 1 : \abs{J_{e,l}} \leq \abs{J_{(K)}'} \}$. Set $J_e= J_{l_e}$. 
    \end{enumerate}

For fixed $F$ the sampling algorithm above generates a sample from the distribution of the $\{J_{ij} \}$ conditioned on the position of the top $K$ bonds in the order statistics.
Note that by replacing $J_e$ for $e \not \in F$ by the value $J_{e,1}$, and $J_e$ by $\infty$ for $e \in F$, the percolation parameters $p_e$ only increase. 
Consequently we have by monotonicity that for any increasing event $\mathcal E$, 
\begin{align}\label{eq:monotonicity-resampling}
    \E_{\J}[\p_{\ind}(\mathcal E) \mid \mathcal F_K] \le  \p(G_{\mathcal F_K} \in \mathcal E)
\end{align}
where $G_{\mathcal{F}_K}$ is an independent percolation with parameters $p_e$ conditioned on the edges in $\mathcal{F}_K$ being present, and therefore, 
\[
\E_\J( \p_{\text{ind}}(A) | \mathcal{F}_{K}) \leq \p( \exists x,y \in V_{\mathcal{F}_{K}}, (x,y) \not \in \mathcal{F}_{K}: x \xleftrightarrow{G_{\mathcal{F}_{K}}} y )\,.
\]
 An application of Fubini's theorem and independence of the $J_e$'s for $e \notin \mathcal{F}_K$ gives us that 
\begin{align*}
\E_\J \p_{\ind}(A) &\leq \E_{\mathcal{F}_{K}} \E_{J_{\mathcal{F}_{K}^c}} \p (\exists x,y \in V_{\mathcal{F}_K}, (x,y) \not \in \mathcal{F}_{K}: x \xleftrightarrow{G_{\mathcal{F}_K} } y ) \mathbf{1}( \abs{V_{\mathcal{F}_K}} = 2K) \\
& \quad + \p( \abs{V_{\mathcal{F}_K}} \neq 2K) 
\\
&< \frac{C}{N} ( 4K^2 + 2K) \, ,
\end{align*}
where for the first term we note that $\E_{J_{S^c}} \p_{\text{ind}}$ is the law of mean field percolation measure with parameter $p= \E p_{12}$ conditioned on edges in $S$ being present. Since $N \E p_{1,2} < 1$ uniformly for large $N$, there is $c_{\beta}<1$ so that $\E_{J_{S^c}}$ is dominated by independent percolation with parameter $c_{\beta}/N$, conditioned on the edges $e_1,...,e_K$ being present. Lemma \ref{lem:decay-of-connection-prob} and a union bound, then bounds the first term. 
We conclude by Markov's inequality that:
\[
\p_\J\left( \sup_{L<K} \max_{\tau \in \Sigma_{N,L}}  \prc_{\tau}( \exists  l\neq l' \leq K : \{v_l,w_l\} \leftrightarrow \{v_{l'},w_{l'}\}  ) > C/N^{\epsilon}  \right) \to 0 \, ,
\]
for some $C(\beta),\epsilon(\gamma)>0$.
\end{proof}

We now prove Theorem \ref{thm:high-temp-regime}.  
\begin{proof}[Proof of Theorem~\ref{thm:high-temp-regime}]
   By Lemma \ref{lem:rc-high-temp-structure}, we have with $\p_\J$ probability $1-o(1)$ that for any $L <K$ and $\tau \in \Sigma_{N,K}$ that for a sample $\omega \sim \prc_{\tau}$, the vertices $(v_l,w_l)_{l \leq K }$ lie in distinct cluster with probability at least $1- CN^{-\epsilon}$. 
Hence via the coupling of $\prc_{\tau}$ and $\pi_{\beta,\J}(\cdot \mid \mathcal{C}_{\tau})$ we have that first sampling $\omega \sim \prc_{\tau}$ and then using $\omega$ to generate $\sigma \sim \pi_{\beta,\J}(\cdot \mid \mathcal{C}_{\tau})$, the random variables $\sigma_{v_{L+1}},...,\sigma_K$ are i.i.d uniform with probability $1-CN^{-\epsilon}$. Consequently the total variation distance to uniform is bounded above by $CN^{-\epsilon}$ on the complement of the event in Lemma \ref{lem:rc-high-temp-structure}. In other words:
\[
\p_\J \Big( \sup_{L<K} \max_{\tau \in \Sigma_{N,L} } \TV{ \pi_{\beta,\J}( (\sigma_{v_{L+1}},...,\sigma_{v_{K}} ) \in \cdot \mid \mathcal{C}_{\tau} )}{ \text{Unif}( \{ \pm 1 \}^{K-L}  )} > \frac{C}{N^{\epsilon}} \Big) \to 0 \,,
\]
completing the proof.
\end{proof}
We now prove Proposition~\ref{prop:law-q-high-temp}

\begin{proof}[Proof of Proposition \ref{prop:law-q-high-temp}] It will suffice to show the second moment of $q_{a,\gamma}$ tends to zero as $N$ tends to infinity. First note that a Chernoff bound implies $\abs{E_{a,\gamma} } < N^{1-\alpha \gamma}$ with probability $1-e^{-N^{\theta}}$, as \[N^{1-\alpha \gamma} -  \E \abs{E_{a,\gamma}}  = N^{1-\alpha \gamma} -    \frac{(2\beta)^{\alpha}(N-1)N^{-\alpha \gamma}}{2a^{\alpha}}   \, ,\]
and this diverges to $+\infty$ provided $\beta<\beta_0$ and $a \geq 1$. 
Now we may write, 
\[
\E [q_{a,\gamma}^2] = \frac{1}{2^{\abs{E_{a,\gamma}}}} \sum_{\tau \in \{ \pm 1 \}^{\abs{E_{a,\gamma}}}} \langle R_{1,2}^2 \rangle_{\tau}\,, \qquad \text{where} \qquad R_{1,2} = \frac{1}{N} \sum_{i=1}^{N} \sigma_i^1 \sigma_i^2 \, ,
\]
where $\sigma^1, \sigma^2$ are independent draws. By expanding $R_{1,2}^2$ one has on the event that $\abs{E_{a,\gamma}}<K$ that: 
    \begin{align*}
\langle R_{1,2}^2 \rangle_{\tau} &= \frac{1}{N} + \frac{1}{N^2} \sum_{1 \leq i < j \leq N } \langle \sigma_i \sigma_j \rangle_{\tau}^2  
\\
&= \frac{1}{N} + \frac{1}{N^2} \sum_{1 \leq i \leq j \leq N} (\langle\eta_{ij} (\omega) \rangle_{\tau}^\rc) ^2
\\
&\leq \frac{1}{N} + \frac{1}{N^2} \sum_{1 \leq i < j \leq N} \prc_{\tau} ( i \leftrightarrow j )^2 
\\
&\leq \frac{1}{N} + \frac{1}{N^2} \sum_{1 \leq i < j \leq N}  \p_{\ind}(i \leftrightarrow j )^2  \, ,
    \end{align*}
combining with the expression for $\E [q_{a,\gamma}^2]$ (dropping the square on $\p_{\ind}$ and using $q_{a,\gamma}^2 \leq 1$) one has,
\[
\E_{\J} \E [q_{a,\gamma}^2] \leq \frac{1}{N} + \frac{1}{N^2} \E_{\J} \Big[\sum_{1 \leq i < j \leq N}  \p_{\text{ind}} (i \leftrightarrow j ) \Big] + e^{-N^{\theta}}  \, . 
\] 

Now by~\eqref{eq:monotonicity-resampling}, and the FKG inequality to apply  Lemma \ref{lem:decay-of-connection-prob}, we have for any $i <j $ that 
\[
\E_\J [\p_{\text{ind} }( i \leftrightarrow j)]  = \E_{\J} \Big[ \mathbf{1}_{ij\notin \mathcal F_K} \E_\J [ \p_{\text{ind}}(i\leftrightarrow j)\mid \mathcal F_K]\Big] + \p_\J(ij\in \mathcal F_K)  \leq \frac{C}{N} + \frac{K}{ \binom{N}{2} } \, .
\]
 Combining these bounds, one has
\[
\E_{\J} [\E [ q_{a,\gamma}^2]] \leq e^{-N^{\theta}} + \frac{1}{N} + \frac{1}{N^2} \Bigg[ \binom{N}{2} \frac{C}{N} + K \Bigg] \to 0 \, ,
\]
and so an application of Markov's inequality implies that the total variation distance between $q_{a,\gamma}$ and a delta-mass at zero is $o(1)$.
\end{proof}

\subsection{Alignment of Bonds}  \label{AP:Allignment}

In this section we give a short proof of Theorem \ref{THM:Allignment-of-Bonds} for completeness. 

\begin{proof}[Proof of Theorem \ref{THM:Allignment-of-Bonds}]  We work on the events of section \ref{sec:coupling-matrix-preliminaries}. 
Consider the random cluster representation of the Levy-model defined in Appendix ~\ref{appendix:high-temp}, and consider the collection of random cluster configurations given by: 
\[
A = \{ \omega \in \Omega_N : \exists \  1 \leq i \leq K, \omega_{e_i}=0  \} \, , 
\]
which consists of random cluster configurations which are not guaranteed to sample $\sigma$ so that $e_i \in \sat(\sigma)$ for $1 \leq i \leq K$. 

Denote by $\mathcal{N}_K$ the collection of edges incident to one of the vertices in the top $K$ positions in the  order statistics that are not equal to $e_1,...,e_K$, that is: 
\[
\mathcal{N}_K := \bigcup_{1 \leq l \leq K } \{e : e=(v_l,v) \ \ \text{and} \  v \neq w_l, \ \text{or} \ e=(v,w_l) \ \ \text{and} \  v \neq v_l   \} \, .
\]
Now define a map $\Phi:A\to A^c$ which takes $\omega \in A$ and returns  
\[
\Phi(\omega)_{e} = 
\begin{cases}
    1 &\text{if} \  e \in  \{e_1,...,e_K \} \\ 
    0 &\text{if } \ e  \in \mathcal{N}_K \\ 
    \omega_e &\text{otherwise}
\end{cases} \, .
\]
As $\omega$ is a valid random-cluster configuration, and the vertex set of edges $e_1,...,e_K$ are disjoint, this will remain a valid random cluster configuration. 
Then we may compute for every $\omega \in A$, 
\[
\frac{\prc(\omega)}{\prc(\Phi(\omega))} = 2^{ \mathsf{Comp}(\omega) - \mathsf{Comp}(\Phi(\omega)) }  \frac{ \left(\prod_{e \in \mathcal{N}_K} p_e^{\omega_e}(1-p_e)^{1-\omega_e} \right) \left( \prod_{i=1}^{K} p_{e_i}^{\omega_{e_i}} (1-p_{e_i})^{1-\omega_{e_i }} \right)   }{\left(\prod_{e \in \mathcal{N}_K} (1-p_e) \right) \left( \prod_{i=1}^{K} p_{e_i} \right)} \, .
\]
Now since $\omega \in A$ there is some $1 \leq i \leq K$ so that $\omega_{e_i} = 0$, and hence we may factor out $e^{-2\beta \abs{J_{(i)}}}$ (which we bound above by $e^{-2\beta \abs{J_{(K)}}}$) from the numerator. Bounding the rest of the numerator by $1$, and bounding $p_{e_i}$ from below by $1/2$ (which follows as $\abs{J_{(K)}}> \frac{a}{2\beta} N^{\gamma}$, and $p_e=1-e^{2\beta \abs{J_{(e)}}}$) we then have for every $\omega \in A$,  
\[
\frac{\prc(\omega)}{\prc(\Phi(\omega))} \leq 2^{N+K} e^{-2\beta \abs{J_{(K)}} + 2\beta \sum_{e \in \mathcal{N}_K} \abs{J_{(e)}} } \leq e^{-0.99 N^{\gamma} } \, ,
\]
    where the last inequality follows by applying Lemma \ref{lem:gaps}, and the relative difference between $\gamma$ and $1-\alpha\gamma + \frac{1}{2\alpha}$.  
Furthermore, we may bound the size of the preimage of any image configuration $\Phi(\omega)$, i.e., $|\{\omega': \Phi(\omega')=\Phi(\omega)\}|$,  by $2^{NK}$ for the possible values $\omega'$ took on $e_1,...,e_K$ and on $\mathcal N_K$.  Thus, 

\begin{align*}
\prc(A) = \sum_{\omega\in A} \prc({ \omega})  & { \leq } \sum_{\Phi(\omega)\in \Phi(A)} \sum_{\omega':\Phi(\omega')=\Phi(\omega)} \frac{\prc(\omega')}{\prc(\Phi(\omega'))}\prc(\Phi(\omega)) \\
& \le \prc(\Phi(A)) 2^{NK} e^{ -0.99 N^{\gamma}}\,.
\end{align*}
By definition of $\gamma$ (and $K$), one can check that $2-\alpha \gamma < \gamma$, so bounding $\prc(\Phi(A))\le 1$, we get 
\[
\prc(A) < e^{-\frac{1}{2} N^{\gamma} } \, .
\]
Since $\omega \in A^c$ guarantees that when sampling the coupled spin values, with probability $1$ all edges $e_1,...,e_K$ are satisfied, with $\p_\J$ probability $1-o(1)$ one has that $\pi_{\beta,\J} \big( E_{a,\gamma} \subset  \sat(\sigma) \big) > 1- e^{-\frac{N^{\gamma}}{2} }$
as claimed. 
\end{proof}

\subsection{Canonical Paths Computation} \label{MCP} 
We consider the 4-state Markov chain used in the proof of Proposition~\ref{Prop:Largeish-bonds-spectral-gap}, defined by Glauber dynamics with respect to Hamiltonian~\eqref{eq:4-state-chain-Hamiltonian}, and bound its inverse spectral gap  using the method of canonical paths.

\begin{proof}[Proof of~\eqref{eq:4-state-chain-gap}]
 Recall that the state space is on configurations $\sigma$ in  $\{ \pm 1 \}^2$, and the Hamiltonian is of the form: 
\[
H (\sigma) = \beta J_{12} \sigma_{1} \sigma_{2} + \beta \sum_{l=1}^{2} \sum_{{ j \in V_1 \cup V_3 }} \tau_j J_{l j} \sigma_{l}\,.
\]
For simplicity in the calculation above we shall assume that $J_{12} >0 $.
Label the points as $$x_1= (1,1),\quad x_2=(1,-1),\quad x_3=(-1,1),\quad x_4=(-1,-1)\,.$$
For every pair of configurations $x_i,x_j$ we will need to specify a choice of paths $x_i=x^1 \to x^2 \to ... \to  x^n=x_j$ such that $P(x^l,x^{l+1})>0$ for each $l$. Let $\Gamma_{ij}$ denote an arbitrary shortest length path from $x_i$ to $x_j$. 
The quantity we must compute is the congestion, defined as
\[
B= \max_{e=x_i x_j} \frac{1}{Q(x_i,x_j)} \sum_{k,l: e \in \Gamma_{k,l}} \pi(x_k) \pi(x_l) \abs{\Gamma_{k,l} } { \, , }
\]
{ where} for an edge $e=ab$ for a pair $a,b\in \{\pm 1\}^2$ with positive probability of transit between them, $Q(e)= \pi(a) P(a,b)$. { Throughout} we assume that $J_{12} \in I_{V_2}$ as defined in \eqref{eq:I-intervals}  and that we are working on the events of Lemma~\ref{lem:energy-drop-off-J}. This has the consequence that for any $a,b \in \{\pm 1 \}^2$ we have a uniform upper bound: 
\[
\abs{H(a) - H(b) } \leq 2\beta \max_{l=1,2} \sum_{j} \abs{J_{lj} } \, ,
\]
which will be used to upper bound the congestion ratio above.
\\

Bounding $\abs{\Gamma_{k,l}} \leq 4$ for each pair $k,l$ we then have for $e=(x_i,x_j)$ with $P(x_i,x_j)>0$ that:
\begin{align*}
\frac{1}{Q(x_i,x_j)} \sum_{k,l: e \in \Gamma_{k,l} } \pi(x_k)\pi(x_l) \abs{\Gamma_{k,l} } &\leq 4 \sum_{k,l : e \in \Gamma_{k,l}} \left( \frac{1}{\pi(x_i)} + \frac{1}{\pi(x_j)} \right) \pi(x_k) \pi(x_l) 
\\
&= 4 \sum_{k,l: e \in \Gamma_{k,l}} \left( e^{H(x_k)-H(x_i)}+e^{H(x_k)-H(x_i)} \right) \pi(x_l) 
\\
&\leq 8 \exp \left(2\beta \max_{l=1,2} \sum_{j} \abs{J_{lj}} \right) \sum_{k,l : e \in \Gamma_{k,l} } \pi(x_l) 
\\
&\leq 128 \exp\left(2\beta \max_{l=1,2} \sum_{j} \abs{J_{lj} } \, , \right)
\end{align*}
where in the last line we have used that there are at most $16$ paths total, and $\pi(x_l) \leq 1$. We thus have:
\[
B \leq 128  \exp \Big( 2\beta \max_{l=1,2} \sum_{j} \abs{J_{lj} } \Big)\,.
\]
It is well-known that this congestion quantity upper bounds the inverse spectral {gap} of our chain (see \cite{LP}, Corollary 13.21). 
\end{proof}
\bibliographystyle{plain}
\bibliography{bib}

\end{document}